\providecommand{\U}[1]{\protect\rule{.1in}{.1in}}
\newtheorem{theorem}{Theorem}
\theoremstyle{plain}
\newtheorem{corollary}{Corollary}
\newtheorem{definition}{Definition}
\newtheorem{example}{Example}
\newtheorem{lemma}{Lemma}
\newtheorem{proposition}{Proposition}
\newtheorem{remark}{Remark}
\numberwithin{equation}{section}
\begin{document}
\title[Quasi $J$-submodules]{Quasi $J$-submodules}
\author{Ece YETKIN\ CELIKEL}
\address{Department of Electrical-Electronics Engineering, Faculty of Engineering,
Hasan Kalyoncu University, Gaziantep, Turkey.}
\email{ece.celikel@hku.edu.tr, yetkinece@gmail.com}
\author{Hani A. Khashan }
\address{Department of Mathematics, Faculty of Science, Al al-Bayt University,Al
Mafraq, Jordan.}
\email{hakhashan@aabu.edu.jo}
\thanks{This paper is in final form and no version of it will be submitted for
publication elsewhere.}
\date{January, 2021}
\subjclass[2010]{13A15, 13A18, 13A99.}
\keywords{quasi $J$-submodule, $J$-submodule, quasi $J$-ideal, quasi $J$-presimplifiable
module, $J$-presimplifiable module.}

\begin{abstract}
Let $R$ be a commutative ring with identity and $M$ be a unitary $R$-module.
The aim of this paper is to extend the notion of quasi $J$-ideals of
commutative rings to quasi $J$-submodules of modules. We call a proper
submodule $N$ of $M$ a quasi $J$-submodule if whenever $r\in R$ and $m\in M$
such that $rm\in N$ and $r\notin(J(R)M:M)$, then $m\in M$-$rad(N)$. We present
various properties and characterizations of this concept (especially in
finitely generated faithful multiplication modules). Furthermore, we provide
new classes of modules generalizing presimplifiable modules and justify their
relation with (quasi) $J$-submodules. Finally, for a submodule $N$ of $M$ and
an ideal $I$ of $R$, we characterize the quasi $J$-ideals of the idealization
ring $R(+)M$.

\end{abstract}
\maketitle

\section{Introduction}

All rings considered in this paper are commutative with identity elements, and
all modules are unital. Let $R$ be a ring and $N$ be a submodule of an
$R$-module $M.$ By $Z(R)$, $reg(R),$ $N(R)$, $J(R)$, $Z(M)$ and $M$-$rad(N),$
we denote the set of zero-divisors of $R$, the set of regular elements in $R$,
the nil radical of $R$, the Jacobian radical of $R,$ the set of all zero
divisors on $M$; i.e. $\{r\in R:rm=0$ for some $0\neq m\in M\}$ and the
intersection of all prime submodules of $M$ containing $N$, respectively. For
submodules $N$ of $M$, the residual of $N$ by $M$, $(N:M)$ denotes the ideal
$\{r\in R:rM\subseteq N\}$. In particular, the ideal $(0:M)$ is called the
annihilator of $M$. Moreover, if $I$ is an ideal of $R$, then the residual
submodule $N$ by $I$ is $[N:_{M}I]=\{m\in M:Im\subseteq N\}$. An $R$-module
$M$ is a multiplication module if every submodule $N$ of $M$ has the form $IM$
for some ideal $I$ of $R$. Equivalently, $N=(N:M)M$, \cite{Bernard}.

In 2015, Mohamadian \cite{Moh} introduced the concept of $r$-ideals in
commutative rings. A proper ideal $I$ of a ring $R$ is called an $r$-ideal if
whenever $a,b\in R$, $ab\in I$ and $Ann\left(  a\right)  =0$ imply that $b\in
I$ where $Ann\left(  a\right)  =\left\{  r\in R:ra=0\right\}  $. As a subclass
of $r$-ideals, Tekir et. al. \cite{Tekir} defined a proper ideal $I$ of $R$ to
be $n$-ideal if whenever $a,b\in R$ such that $ab\in I$ and $a\notin N(R)$,
then $b\in I$. Later, Khashan and Bani-Ata \cite{Hani} generalized this notion
to $J$-ideals and $J$-submodules. A proper ideal $I$ of $R$ is said to be a
$J$-ideal if whenever $a,b\in R$ such that $ab\in I$ and $a\notin J(R)$, then
$b\in I$. A proper submodule $N$ of an $R$-module $M$ is said to be a
$J$-submodule if whenever $r\in R$ and $m\in M$ with $rm\in N$ and
$r\notin(J(R)M:M)$, then $m\in N$. Generalizing the idea of $J$-ideals, as a
very recent study \cite{Haniece}, the class of quasi $J$-ideals has been
defined and studied. A proper ideal $I$ of $R$ is said to be a quasi $J$-ideal
if $\sqrt{I}=\left\{  x\in R:x^{n}\in I\text{ for some }n\in%
%TCIMACRO{\U{2124} }%
%BeginExpansion
\mathbb{Z}
%EndExpansion
\right\}  $ is a $J$-ideal.

The purpose of the present work is to generalize the notions of quasi
$J$-ideals and $J$-submodules by defining and studying quasi $J$-submodules.
We call a proper submodule $N$ of $M$ a quasi $J$-submodule if whenever $r\in
R$ and $m\in M$ such that $rm\in N$ and $r\notin(J(R)M:M)$, then $m\in
M$-$rad(N)$. In Section 2, we investigate many general properties of quasi
$J$-submodules of an $R$-module $M$ with various examples. $J$-submodules are
obviously a quasi $J$-submodules, but the converse of this implication is not
true in general (Example \ref{e1}). Among many other results in this section,
we investigate quasi $J$-submodules under various contexts of constructions
such as homomorphic images, direct products and localizations (see
Propositions \ref{8}, \ref{d} and \ref{l}). In 1997, the notion of
presimplifiable modules was first studied by Anderson and and Valdes-Leon
\cite{Anderson3} as $R$-modules with property that $Z(M)\subseteq J(R)$.
Motivated from this concept, we introduce new generalizations of
presimplifiable modules which are quasi presimplifiable, $J$-presimplifiable
and quasi $J$-presimplifiable modules. Example \ref{ep} is given to show that
these generalizations are proper. The main result of this section (Theorem
\ref{tp}) gives a relation between quasi $J$-submodules (resp. $J$-submodules)
and quasi presimplifiable (resp. $J$-presimplifiable) modules which enables us
to construct more examples for quasi $J$-submodules. Precisely, a submodule
$N$ of an $R$-module $M$ contained in $J(R)M$ is a quasi $J$-submodule (resp.
$J$-submodule) if and only if the quotient $M/N$ is a non-zero quasi
$J$-presimplifiable (resp. $J$-presimplifiable) $R$-module.

The last section deals with quasi $J$-submodules in multiplication modules. In
Theorem \ref{1}, Theorem \ref{3} and Proposition \ref{7}, we present many
properties and characterizations for quasi $J$-submodules of multiplication
modules (especially, in finitely generated faithful multiplication modules).
In particular, in such a module $M$, we characterize quasi $J$-submodules $N$
as those in which the residual ideal $(N:M)$ is a quasi $J$-ideal. It is shown
in Theorem \ref{max} that every quasi $J$-submodule of an $R$-module $M$ is
contained in a maximal quasi $J$-submodule of $M$. Furthermore, if $M$ is
finitely generated faithful multiplication, then a maximal quasi $J$-submodule
of $M$ is a $J$-submodule. For a submodule $N$ of $M$ and an ideal $I$ of $R$,
we finally (Theorem \ref{6}) give a characterization of $J$-ideals in the
idealization ring $R(+)M$.

\section{General properties of Quasi $J$-submodules}

In this section, among other results concerning the general properties of
quasi $J$-submodules, some characterizations of this notion will be
investigated. Moreover, the relations among quasi $J$-submodules and some
other types of submodules will be clarified.

First, we present the fundamental definition of quasi $J$-submodules which
will be studied in this paper.

\begin{definition}
Let $R$ be a ring and let $M$ be an $R$-module. A proper submodule $N$ of $M$
is called a quasi $J$-submodule if whenever $r\in R$ and $m\in M$ such that
$rm\in N$ and $r\notin(J(R)M:M)$, then $m\in M$-$rad(N)$.
\end{definition}

It is clear that any $J$-submodule of $M$ is a quasi $J$-submodule. However,
in the next example we can see that the converse is not true in general.

\begin{example}
\label{e1}Consider the $%
%TCIMACRO{\U{2124} }%
%BeginExpansion
\mathbb{Z}
%EndExpansion
$-module $M=%
%TCIMACRO{\U{2124} }%
%BeginExpansion
\mathbb{Z}
%EndExpansion
_{4}$. Then one can directly see that $M$-$rad(\left\langle \bar
{0}\right\rangle )=\left\langle \bar{2}\right\rangle $. Now, let $r\in%
%TCIMACRO{\U{2124} }%
%BeginExpansion
\mathbb{Z}
%EndExpansion
$ and $\bar{k}\in%
%TCIMACRO{\U{2124} }%
%BeginExpansion
\mathbb{Z}
%EndExpansion
_{4}$ such that $r\cdot\bar{k}\in\left\langle \bar{0}\right\rangle $ and
$r\notin(J(%
%TCIMACRO{\U{2124} }%
%BeginExpansion
\mathbb{Z}
%EndExpansion
)%
%TCIMACRO{\U{2124} }%
%BeginExpansion
\mathbb{Z}
%EndExpansion
_{4}:%
%TCIMACRO{\U{2124} }%
%BeginExpansion
\mathbb{Z}
%EndExpansion
_{4})=(0:%
%TCIMACRO{\U{2124} }%
%BeginExpansion
\mathbb{Z}
%EndExpansion
_{4})=\left\langle 4\right\rangle $. Then clearly $\bar{k}\in\left\langle
\bar{2}\right\rangle =M-rad(\left\langle \bar{0}\right\rangle )$ and so
$\left\langle \bar{0}\right\rangle $ is a quasi $J$-submodule. On the other
hand, $\left\langle \bar{0}\right\rangle $ is not a $J$-submodule since for
example, $2\cdot\bar{2}=\bar{0}$ but $2\notin(0:%
%TCIMACRO{\U{2124} }%
%BeginExpansion
\mathbb{Z}
%EndExpansion
_{4})$ and $\bar{2}\neq\bar{0}$.
\end{example}

Following \cite{Suat}, a proper submodule $N$ of an $R$-module $M$ is called
an $r$-submodule (resp. $sr$-submodule) if whenever $am\in N$ with
$ann_{M}(a)=0_{M}$ (resp. $ann_{R}(m)=0_{R}$), then $m\in N$ (resp.
$a\in(N:M)$) for each $a\in R$ and $m\in M$.

In general, the class of (quasi) $J$-submodules is not comparable with the
classes of $r$-submodules, $sr$-submodules and prime submodules.

\begin{example}
(1) The submodule $N=\left\langle \bar{0}\right\rangle $ is a quasi
$J$-submodule of the $%
%TCIMACRO{\U{2124} }%
%BeginExpansion
\mathbb{Z}
%EndExpansion
$-module $%
%TCIMACRO{\U{2124} }%
%BeginExpansion
\mathbb{Z}
%EndExpansion
_{4}$ which is not an $r$-submodule, an $sr$-submodule or a prime submodule.

(2) The submodule $\left\langle 2\right\rangle $ is a prime submodule of the $%
%TCIMACRO{\U{2124} }%
%BeginExpansion
\mathbb{Z}
%EndExpansion
$-module $%
%TCIMACRO{\U{2124} }%
%BeginExpansion
\mathbb{Z}
%EndExpansion
$ which is not a (quasi) $J$-submodule.

(3) The submodule $N=\left\langle \bar{2}\right\rangle $ is an $r$-submodule
and $sr$-submodule of the $%
%TCIMACRO{\U{2124} }%
%BeginExpansion
\mathbb{Z}
%EndExpansion
$-module $%
%TCIMACRO{\U{2124} }%
%BeginExpansion
\mathbb{Z}
%EndExpansion
_{6}$, \cite[Example 1]{Suat} which is not a (quasi) $J$-submodule.
\end{example}

In the following result, we give a characterization for quasi $J$-submodules
of an $R$-module $M$.

\begin{proposition}
\label{eq1}Let $M$ be an $R$-module and $N$ be a proper submodule of $M$. The
following are equivalent:

\begin{enumerate}
\item $N$ is a quasi $J$-submodule of $M$.

\item If $r\in R-(J(R)M:M)$ and $K$ is a submodule of $M$ with $rK\subseteq
N$, then $K\subseteq M$-$rad(N)$.

\item If $A\nsubseteq(J(R)M:M)$ is an ideal of $R$ and $K$ is a submodule of
$M$ with $AK\subseteq N$, then $K\subseteq M$-$rad(N)$.
\end{enumerate}
\end{proposition}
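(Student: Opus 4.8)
The plan is to prove $(1)\Rightarrow(2)\Rightarrow(3)\Rightarrow(1)$, so that each implication only needs to be established once.
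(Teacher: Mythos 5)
Your proposal is only a strategy, not a proof: stating that you will prove $(1)\Rightarrow(2)\Rightarrow(3)\Rightarrow(1)$ is exactly the decomposition the paper uses, but you have supplied no argument for any of the three implications, and those arguments are the entire mathematical content of the result. Concretely, what is missing is: for $(1)\Rightarrow(2)$, the observation that the quasi $J$-submodule condition can be applied \emph{elementwise} --- for each $k\in K$ one has $rk\in N$ with $r\notin(J(R)M:M)$, hence $k\in M$-$rad(N)$, so $K\subseteq M$-$rad(N)$; for $(2)\Rightarrow(3)$, the selection of a single element $r\in A-(J(R)M:M)$ (which exists precisely because $A\nsubseteq(J(R)M:M)$) so that $rK\subseteq AK\subseteq N$ lets you invoke (2); and for $(3)\Rightarrow(1)$, the passage from elements to the cyclic ideal and cyclic submodule they generate, i.e.\ from $rm\in N$ with $r\notin(J(R)M:M)$ to $\left\langle r\right\rangle\left\langle m\right\rangle\subseteq N$ with $\left\langle r\right\rangle\nsubseteq(J(R)M:M)$, whence $m\in\left\langle m\right\rangle\subseteq M$-$rad(N)$.

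None of these steps is difficult, but each involves a small idea (elementwise application, choosing a witness element, replacing elements by the ideals/submodules they generate) that must be written down for the proof to exist. As it stands, your proposal would be equally compatible with a false statement; a referee could not distinguish it from no proof at all. Fill in the three implications along the lines above and you will have reproduced the paper's proof essentially verbatim.
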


\begin{proof}
(1)$\Rightarrow$(2) Assume $N$ is a quasi $J$-submodule. Suppose $r\in
R-(J(R)M:M)$ and $K$ is a submodule of $M$ with $rK\subseteq N$. Then for all
$k\in K$, $rk\in N$ and so $k\in M$-$rad(N)$. Therefore, $K\subseteq
M$-$rad(N)$ as needed.

(2)$\Rightarrow$(3) Suppose $AK\subseteq N$ for an ideal $A\nsubseteq
(J(R)M:M)$ and a submodule $K$ of $M$. Then for $r\in A-(J(R)M:M)$, we have
$rK\subseteq N$ and so by assumption, $K\subseteq M$-$rad(N)$.

(3)$\Rightarrow$(1) Let $r\in R$ and $m\in M$ such that $rm\in N$ and
$r\notin(J(R)M:M)$. Then $\left\langle r\right\rangle \left\langle
m\right\rangle \subseteq N$ and $\left\langle r\right\rangle \nsubseteq
(J(R)M:M)$ and so $m\in\left\langle m\right\rangle \subseteq M$-$rad(N)$ and
we are done.
\end{proof}

\begin{lemma}
\cite{Lu}\label{4}Let $\varphi:M_{1}\longrightarrow M_{2}$ be an $R$-module epimorphism.Then

\begin{enumerate}
\item If $N$ is a submodule of $M_{1}$ and $\ker\left(  \varphi\right)
\subseteq N$, then $\varphi\left(  M_{1}\text{-}rad(N)\right)  =M_{2}%
$-$rad(\varphi\left(  N\right)  )$.

\item If $K$ is a submodule of $M_{2}$, then $\varphi^{-1}\left(
M_{2}\text{-}rad(K)\right)  =M_{1}$-$rad(\varphi^{-1}\left(  K\right)  )$.
\end{enumerate}
\end{lemma}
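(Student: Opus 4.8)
The plan is to reduce both statements to the standard order-correspondence between prime submodules across a surjection, combined with two elementary facts about how $\varphi$ interacts with intersections. The central tool I would establish first is the following prime-correspondence: if $P$ is a prime submodule of $M_{1}$ with $\ker(\varphi)\subseteq P$, then $\varphi(P)$ is a prime submodule of $M_{2}$, and conversely if $Q$ is a prime submodule of $M_{2}$ then $\varphi^{-1}(Q)$ is a prime submodule of $M_{1}$ (which automatically contains $\ker(\varphi)$); moreover these two assignments are mutually inverse on the relevant sets. To verify that $\varphi(P)$ is prime, I would check it is proper (using $\varphi^{-1}(\varphi(P))=P$, valid because $P\supseteq\ker\varphi$) and then, given $ry\in\varphi(P)$ with $y=\varphi(x)$, deduce $rx\in P$ and split into the cases $x\in P$ or $rM_{1}\subseteq P$, pushing each through $\varphi$; the converse direction for $\varphi^{-1}(Q)$ is the dual computation using surjectivity to get $rM_{2}=\varphi(rM_{1})$.

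For part (1), since $\ker(\varphi)\subseteq N$, every prime submodule $P$ containing $N$ also contains $\ker(\varphi)$, so by the correspondence $P\mapsto\varphi(P)$ is a bijection between the prime submodules of $M_{1}$ containing $N$ and those of $M_{2}$ containing $\varphi(N)$. I would then invoke the fact that a surjection commutes with intersections of submodules that all contain its kernel: if each $L_{i}\supseteq\ker(\varphi)$, then $\varphi(\bigcap_{i}L_{i})=\bigcap_{i}\varphi(L_{i})$ (the nontrivial inclusion uses that any preimage of an element of the right-hand side can be corrected by an element of $\ker\varphi\subseteq L_{i}$ to lie in every $L_{i}$). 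Applying this to the family of primes containing $N$ yields $\varphi(M_{1}\text{-}rad(N))=\bigcap_{P}\varphi(P)=M_{2}\text{-}rad(\varphi(N))$.

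For part (2), the argument is cleaner because taking preimages always commutes with arbitrary intersections, so $\varphi^{-1}(M_{2}\text{-}rad(K))=\bigcap_{Q}\varphi^{-1}(Q)$, where $Q$ ranges over the prime submodules of $M_{2}$ containing $K$. By the correspondence each $\varphi^{-1}(Q)$ is a prime submodule of $M_{1}$ containing $\varphi^{-1}(K)$, and conversely every prime submodule $P$ of $M_{1}$ containing $\varphi^{-1}(K)$ contains $\ker(\varphi)$ and hence equals $\varphi^{-1}(\varphi(P))$ with $\varphi(P)$ prime and containing $K$; thus the family $\{\varphi^{-1}(Q)\}$ is exactly the family of all prime submodules of $M_{1}$ containing $\varphi^{-1}(K)$, and its intersection is $M_{1}\text{-}rad(\varphi^{-1}(K))$.

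I expect the main obstacle to be the careful bookkeeping in the prime-correspondence rather than any deep difficulty: one must verify properness and the two-case primeness condition in both directions, and keep track of the hypothesis $\ker(\varphi)\subseteq N$ that makes the relevant submodules saturated. A secondary point to address is the degenerate case in which no prime submodule contains the given submodule, where $M\text{-}rad$ equals the whole module by the empty-intersection convention; here the correspondence shows both sides degenerate simultaneously, so the equalities persist with $\varphi(M_{1})=M_{2}$.
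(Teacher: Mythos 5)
Your proof is correct: the prime-correspondence across an epimorphism (with the saturation condition $\ker\varphi\subseteq P$), the fact that $\varphi$ commutes with intersections of submodules containing $\ker\varphi$, the fact that $\varphi^{-1}$ commutes with arbitrary intersections, and the treatment of the empty-intersection convention are all sound, and together they give both equalities. Note that the paper itself provides no proof of this lemma --- it is stated as a quoted result from \cite{Lu} --- so there is no in-paper argument to compare against; yours is the standard proof of that cited result.
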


\begin{proposition}
\label{8}Let $\varphi:M_{1}\longrightarrow M_{2}$ be an $R$-module
epimorphism. Then

\begin{enumerate}
\item If $N$ is a quasi J-submodule of $M_{1}$ with $\ker\left(
\varphi\right)  \subseteq N$, then $\varphi\left(  N\right)  $ is a quasi
J-submodule of $M_{2}$.

\item If $K$ is a quasi J-submodule of $M_{2}$ with $\ker\left(
\varphi\right)  \subseteq J\left(  R\right)  M_{1}$, then $\varphi^{-1}\left(
K\right)  $ is a quasi J-submodule of $M_{1}$.
\end{enumerate}
\end{proposition}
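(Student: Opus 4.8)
The plan is to prove each direction by unpacking the definition of quasi $J$-submodule and pushing elements through $\varphi$ or $\varphi^{-1}$, with Lemma \ref{4} doing the essential work of relating the $M$-radicals across the homomorphism.

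For part (1), I would start by checking that $\varphi(N)$ is proper: since $\ker(\varphi)\subseteq N$ and $N$ is proper, $\varphi(N)\neq M_2$. Then suppose $r\in R$ and $m_2\in M_2$ satisfy $rm_2\in\varphi(N)$ with $r\notin(J(R)M_2:M_2)$. Since $\varphi$ is surjective, write $m_2=\varphi(m_1)$ for some $m_1\in M_1$. The goal is to show $m_2\in M_2\text{-}rad(\varphi(N))$. First I would verify that $r\notin(J(R)M_1:M_1)$; this should follow because $\varphi(J(R)M_1)=J(R)M_2$ and the condition $rM_1\subseteq J(R)M_1$ would push forward to $rM_2\subseteq J(R)M_2$, contradicting $r\notin(J(R)M_2:M_2)$. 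Next, from $r\varphi(m_1)\in\varphi(N)$ I would lift back: $\varphi(rm_1)\in\varphi(N)$ gives $rm_1\in N+\ker(\varphi)=N$ (using $\ker(\varphi)\subseteq N$). Applying that $N$ is a quasi $J$-submodule of $M_1$ with $r\notin(J(R)M_1:M_1)$ yields $m_1\in M_1\text{-}rad(N)$. Finally, applying $\varphi$ and invoking Lemma \ref{4}(1), which gives $\varphi(M_1\text{-}rad(N))=M_2\text{-}rad(\varphi(N))$, I conclude $m_2=\varphi(m_1)\in M_2\text{-}rad(\varphi(N))$, as desired.

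For part (2), I would again first confirm properness: $\varphi^{-1}(K)\neq M_1$ since $K\neq M_2$ and $\varphi$ is onto. Suppose $r\in R$ and $m_1\in M_1$ satisfy $rm_1\in\varphi^{-1}(K)$ with $r\notin(J(R)M_1:M_1)$. Applying $\varphi$ gives $r\varphi(m_1)\in K$. The key subtlety here is transferring the hypothesis on $r$: I need $r\notin(J(R)M_2:M_2)$ to apply that $K$ is a quasi $J$-submodule. This is where the extra assumption $\ker(\varphi)\subseteq J(R)M_1$ enters—it guarantees that $rM_2\subseteq J(R)M_2$ would force $rM_1\subseteq J(R)M_1+\ker(\varphi)=J(R)M_1$, contradicting $r\notin(J(R)M_1:M_1)$. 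With $r\notin(J(R)M_2:M_2)$ established, $K$ being a quasi $J$-submodule yields $\varphi(m_1)\in M_2\text{-}rad(K)$, so $m_1\in\varphi^{-1}(M_2\text{-}rad(K))$. Then Lemma \ref{4}(2), giving $\varphi^{-1}(M_2\text{-}rad(K))=M_1\text{-}rad(\varphi^{-1}(K))$, delivers $m_1\in M_1\text{-}rad(\varphi^{-1}(K))$, completing the proof.

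The main obstacle in both parts is the careful transfer of the condition $r\notin(J(R)M:M)$ between the two modules, which is exactly why part (1) needs $\ker(\varphi)\subseteq N$ and part (2) needs $\ker(\varphi)\subseteq J(R)M_1$; without these kernel hypotheses the pullback or pushforward of $r$-multiplication information breaks down. I would need to verify the identity $\varphi(J(R)M_1)=J(R)M_2$ (immediate from surjectivity and $R$-linearity) and handle the residual-ideal translation cleanly. Once these bookkeeping facts about $(J(R)M:M)$ are in place, the core logic reduces to a direct element-chase combined with two applications of Lemma \ref{4}, so I expect the proof to be short.
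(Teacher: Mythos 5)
Your proposal is correct and follows essentially the same route as the paper's proof: verify properness, transfer the condition $r\notin(J(R)M:M)$ between $M_{1}$ and $M_{2}$ using the respective kernel hypotheses, and finish with the two parts of Lemma \ref{4} to identify the radicals. The only cosmetic difference is that you phrase the transfer in part (2) via the identity $\varphi^{-1}(\varphi(J(R)M_{1}))=J(R)M_{1}+\ker(\varphi)$, which is exactly the element-chase the paper writes out in longhand.
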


\begin{proof}
$\left(  1\right)  $ Suppose $\varphi\left(  N\right)  =M_{2}=\varphi\left(
M_{1}\right)  $ and let $m_{1}\in M_{1}$. Then $\varphi\left(  m_{1}\right)
=\varphi\left(  n\right)  $ for some $n\in N$ and so $m_{1}-n\in\ker\left(
\varphi\right)  \subseteq N$. So, $m_{1}\in N$ and $N=M_{1}$ which is a
contradiction. Hence, $\varphi\left(  N\right)  $ is proper in $M_{2}$. Let
$r\in R$ and $m_{2}\in M_{2}$ such that $rm_{2}\in\varphi\left(  N\right)  $
and $r\notin\left(  J\left(  R\right)  M_{2}:M_{2}\right)  $. Choose $m_{1}\in
M_{1}$ such that $\varphi\left(  m_{1}\right)  =m_{2}$. Then $rm_{2}%
=r\varphi\left(  m_{1}\right)  =\varphi\left(  rm_{1}\right)  \in
\varphi\left(  N\right)  $. Thus, $\varphi\left(  rm_{1}-a\right)  =0$ for
some $a\in N$ and so $rm_{1}-a\in\ker\left(  \varphi\right)  \subseteq N$. It
follows that $rm_{1}\in N$. Moreover, we have $r\notin\left(  J\left(
R\right)  M_{1}:M_{1}\right)  $. Indeed, if $rM_{1}\subseteq J\left(
R\right)  M_{1}$, then $rM_{2}=r\varphi\left(  M_{1}\right)  =\varphi
(rM_{1})\subseteq\varphi\left(  J\left(  R\right)  M_{1}\right)  =J\left(
R\right)  \varphi\left(  M_{1}\right)  =J\left(  R\right)  M_{2}$ which is a
contradiction. Since $N$ is a quasi J-submodule, then $m_{1}\in M_{1}%
$-$rad(N)$. Thus, $m_{2}=\varphi\left(  m_{1}\right)  \in\varphi\left(
M_{1}\text{-}rad(N)\right)  =M_{2}$-$rad(\varphi\left(  N\right)  )$ by Lemma
\ref{4} and $\varphi\left(  N\right)  $ is a quasi J-submodule of $M_{2}$.

$\left(  2\right)  $ Clearly, $\varphi^{-1}\left(  K\right)  $ is proper in
$M_{1}$. Let $r\in R$ and $m_{1}\in M_{1}$ such that $rm_{1}\in\varphi
^{-1}\left(  K\right)  $ and $r\notin\left(  J\left(  R\right)  M_{1}%
:M_{1}\right)  $. Then $r\varphi\left(  m_{1}\right)  =\varphi\left(
rm_{1}\right)  \in K$. We prove that $r\notin\left(  J\left(  R\right)
M_{2}:M_{2}\right)  $. Suppose on the contrary that $rM_{2}\subseteq J\left(
R\right)  M_{2}$. Then $r\varphi\left(  M_{1}\right)  \subseteq J\left(
R\right)  \varphi\left(  M_{1}\right)  $ and so $\varphi\left(  rM_{1}\right)
\subseteq\varphi\left(  J\left(  R\right)  M_{1}\right)  $. Now, if $x\in
rM_{1}$, then $\varphi\left(  x\right)  \in\varphi\left(  rM_{1}\right)
\subseteq\varphi\left(  J\left(  R\right)  M_{1}\right)  $ and hence
$x-t\in\ker\left(  \varphi\right)  \subseteq J\left(  R\right)  M_{1}$\ for
some $t\in J\left(  R\right)  M_{1}$. It follows that $x\in J\left(  R\right)
M_{1}$ and $rM_{1}\subseteq J\left(  R\right)  M_{1}$ which is a
contradiction. Since $K$ is a quasi $J$-submodule of $M_{2}$, then
$\varphi\left(  m_{1}\right)  \in M_{2}$-$rad(K)$. Therefore, $m_{1}\in
\varphi^{-1}\left(  M_{2}\text{-}rad(K)\right)  =M_{1}$-$rad(\varphi
^{-1}\left(  K\right)  )$ by Lemma \ref{4} and the result follows.
\end{proof}

\begin{corollary}
Let $N$ and $L$ be submodules of an $R$-module $M$ with $L\subseteq N$. If $N$
is a quasi $J$-submodule of $M$, then $N/L$ is a quasi $J$-submodule of $M/L$.
\end{corollary}

\begin{proposition}
\label{d}Let $M_{1},M_{2},...,M_{k}$ be $R$-modules and consider the
$R$-module $M=M_{1}\times M_{2}\times\cdots\times M_{k}$.

(1) If $N=N_{1}\times N_{2}\times\cdots\times N_{k}$ is a quasi $J$-submodule
of $M$, then $N_{i}$ is a quasi $J$-submodule of $M_{i}$ for all $i$ such that
$N_{i}\neq M_{i}$.

(2) If $N_{j}$ is a quasi $J$-submodule of $M_{j}$ for some $j\in\left\{
1,2,...,k\right\}  $, then $N=M_{1}\times M_{2}\times\cdots\times N_{j}%
\times\cdots\times M_{k}$ is a quasi $J$-submodule of $M$.
\end{proposition}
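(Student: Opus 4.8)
The plan is to reduce both parts to two structural facts about the finite product $M=M_{1}\times\cdots\times M_{k}$. First, the residual splits as an intersection,
\[
(J(R)M:M)=\bigcap_{i=1}^{k}(J(R)M_{i}:M_{i}),
\]
since $J(R)M=J(R)M_{1}\times\cdots\times J(R)M_{k}$ and $rM\subseteq J(R)M$ holds exactly when $rM_{i}\subseteq J(R)M_{i}$ for every $i$. Second, for a product submodule the radical distributes, $M\text{-}rad(N_{1}\times\cdots\times N_{k})=M_{1}\text{-}rad(N_{1})\times\cdots\times M_{k}\text{-}rad(N_{k})$, with the convention $M_{i}\text{-}rad(M_{i})=M_{i}$; the elementary containment $\subseteq$ comes from the block prime submodules $M_{1}\times\cdots\times P\times\cdots\times M_{k}$ (with $P$ prime in $M_{i}$ containing $N_{i}$), whose intersection is already the right-hand side, and this containment alone suffices for part (1) and for the obstruction below. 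I would establish these first.

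For (1), fix $i$ with $N_{i}\neq M_{i}$, so that $N_{i}$ is proper, and take $r\in R$, $m_{i}\in M_{i}$ with $rm_{i}\in N_{i}$ and $r\notin(J(R)M_{i}:M_{i})$. I would lift $m_{i}$ to $\tilde{m}\in M$ having $m_{i}$ in coordinate $i$ and $0$ elsewhere; then $r\tilde{m}\in N$, while the intersection formula gives $r\notin(J(R)M:M)$. Applying the quasi $J$-submodule hypothesis to $\tilde{m}$ and reading off the $i$-th coordinate through the radical containment yields $m_{i}\in M_{i}\text{-}rad(N_{i})$. This direction is routine.

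For (2), set $N=M_{1}\times\cdots\times N_{j}\times\cdots\times M_{k}$ (proper since $N_{j}$ is), take $r\in R$ and $m=(m_{1},\dots,m_{k})$ with $rm\in N$ and $r\notin(J(R)M:M)$, and note $rm_{j}\in N_{j}$ while $M\text{-}rad(N)=M_{1}\times\cdots\times M_{j}\text{-}rad(N_{j})\times\cdots\times M_{k}$. Thus the whole problem collapses to proving $m_{j}\in M_{j}\text{-}rad(N_{j})$, and the natural move is to feed $rm_{j}\in N_{j}$ into the quasi $J$-submodule property of $N_{j}$.

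I expect this last step to be the genuine obstacle. The property of $N_{j}$ can only be invoked once one knows $r\notin(J(R)M_{j}:M_{j})$, whereas the hypothesis supplies only $r\notin\bigcap_{i}(J(R)M_{i}:M_{i})$, i.e.\ $r\notin(J(R)M_{i_{0}}:M_{i_{0}})$ for \emph{some} index $i_{0}$ that need not equal $j$; when $i_{0}\neq j$ it may well happen that $r\in(J(R)M_{j}:M_{j})$, and then nothing forces $m_{j}\in M_{j}\text{-}rad(N_{j})$. This appears to be a real gap rather than a cosmetic one: over $R=\mathbb{Z}$ (so $J(R)=0$), with $M_{1}=\mathbb{Z}$, $M_{2}=\mathbb{Z}_{4}$ and $N_{2}=\langle\bar{0}\rangle$ a quasi $J$-submodule of $\mathbb{Z}_{4}$ (Example \ref{e1}), the choice $r=4$, $m=(0,\bar{1})$ satisfies $rm=(0,\bar{0})\in\mathbb{Z}\times\langle\bar{0}\rangle=N$ and $r\notin(J(R)M:M)=(0:M)=0$, yet $m\notin M\text{-}rad(N)\subseteq\mathbb{Z}\times\langle\bar{2}\rangle$ because $\bar{1}\notin\langle\bar{2}\rangle$. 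Consequently I would expect to complete (2) only after neutralizing this obstacle, for instance by imposing a condition that collapses each residual to $J(R)$ (such as each $M_{i}$ being a finitely generated faithful multiplication module, where $(IM_{i}:M_{i})=I$ for every ideal $I$, so that $(J(R)M_{i}:M_{i})=J(R)$ for all $i$). Under such a hypothesis the case $r\in(J(R)M_{j}:M_{j})$ forces $r\in J(R)=(J(R)M:M)$, contradicting $r\notin(J(R)M:M)$; the case is then vacuous and the component argument goes through.
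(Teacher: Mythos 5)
Your part (1) is correct and is essentially the paper's own argument: embed $m_{i}$ as the tuple with $m_{i}$ in coordinate $i$ and $0$ elsewhere, transfer the hypothesis on $r$ via the containment $(J(R)M:M)=\bigcap_{i=1}^{k}(J(R)M_{i}:M_{i})\subseteq(J(R)M_{i}:M_{i})$, and read off the $i$-th coordinate of the radical.

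For part (2) you have identified a genuine flaw, and it lies in the paper, not in your reading. The paper's proof (written for $j=1$) passes from $r\notin(J(R)M:M)$ to the assertion that ``clearly $r\notin(J(R)M_{1}:M_{1})$,'' which is exactly the reversed containment you flag: since $(J(R)M:M)$ is the \emph{intersection} of the residuals $(J(R)M_{i}:M_{i})$, the hypothesis only yields $r\notin(J(R)M_{i_{0}}:M_{i_{0}})$ for \emph{some} $i_{0}$, not necessarily for $i_{0}=j$. Your counterexample is valid: for $R=\mathbb{Z}$, $M=\mathbb{Z}\times\mathbb{Z}_{4}$, $N=\mathbb{Z}\times\left\langle \bar{0}\right\rangle$, the element $r=4$ satisfies $r\notin(J(R)M:M)=0$ (because of the faithful factor $\mathbb{Z}$) while $4\in(J(\mathbb{Z})\mathbb{Z}_{4}:\mathbb{Z}_{4})=\left\langle 4\right\rangle$, and $4\cdot(0,\bar{1})\in N$ yet $(0,\bar{1})\notin M\text{-}rad(N)$, since $\mathbb{Z}\times\left\langle \bar{2}\right\rangle$ is a prime submodule of $M$ containing $N$ that misses $(0,\bar{1})$. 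So statement (2) is false as written; it is defeated by the very phenomenon the authors record in the Remark immediately following the proposition (the example $\bar{0}\times 0$ in $\mathbb{Z}_{4}\times\mathbb{Z}$), which does not go away when the remaining factors are taken to be full modules. Your proposed repair is the right one: under a hypothesis forcing $(J(R)M_{i}:M_{i})=J(R)$ for every $i$ (e.g.\ each $M_{i}$ finitely generated faithful multiplication, by Lemma \ref{9}), the problematic case becomes vacuous; for the positive direction one then also uses that every prime submodule of $M$ containing $N$ has the form $M_{1}\times\cdots\times P_{j}\times\cdots\times M_{k}$ with $P_{j}$ prime in $M_{j}$, so that $M_{1}\times\cdots\times M_{j}\text{-}rad(N_{j})\times\cdots\times M_{k}\subseteq M\text{-}rad(N)$, and the component argument closes.
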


\begin{proof}
(1) With no loss of generality, we assume $N_{1}\neq M_{1}$ and prove that
$N_{1}$ is a quasi $J$-submodule of $M_{1}$. Let $r\in R$ and $m\in M_{1}$
such that $rm\in N_{1}$ and $r\notin(J(R)M_{1}:M_{1})$. Then $r.(m,0,...,0)\in
N$ and clearly $r\notin(J(R)M:M)$. It follows that $(m,0,...,0)\in M$-$rad(N)$
and so $m\in M$-$rad(N_{1})$ as required.

(2) With no loss of generality, suppose $N_{1}$ is a quasi $J$-submodule of
$M_{1}$. Let $r\in R$ and $(m_{1},m_{2},...,m_{k})\in M_{1}\times M_{2}%
\times\cdots\times M_{k}$ such that $(r.m_{1},r.m_{2},...,r.m_{k}%
)=r.(m_{1},m_{2},...,m_{k})\in N_{1}\times M_{2}\times\cdots\times M_{k}$ and
$r\notin(J(R)M:M)$. Then $rm_{1}\in N_{1}$ and clearly $r\notin(J(R)M_{1}%
:M_{1})$. Therefore, $m_{1}\in M_{1}$-$rad(N_{1})$ and then $(m_{1}%
,m_{2},...,m_{k})\in M$-$rad(N_{1}\times M_{2}\times\cdots\times M_{k})$.
\end{proof}

\begin{remark}
(1) If $N_{1}$ and $N_{2}$ are quasi $J$-submodules of $R$-modules $M_{1}$ and
$M_{2}$ respectively, then $N_{1}\times N_{2}$ need not be a quasi
$J$-submodule of $M_{1}\times M_{2}$. For example $\bar{0}$ and $0$ are quasi
$J$-submodules of the $%
%TCIMACRO{\U{2124} }%
%BeginExpansion
\mathbb{Z}
%EndExpansion
$-modules $%
%TCIMACRO{\U{2124} }%
%BeginExpansion
\mathbb{Z}
%EndExpansion
_{4}$ and $%
%TCIMACRO{\U{2124} }%
%BeginExpansion
\mathbb{Z}
%EndExpansion
$ respectively. However, $\bar{0}\times0$ is not a quasi $J$-submodule of $%
%TCIMACRO{\U{2124} }%
%BeginExpansion
\mathbb{Z}
%EndExpansion
_{4}\times%
%TCIMACRO{\U{2124} }%
%BeginExpansion
\mathbb{Z}
%EndExpansion
$ as $4.(\bar{1},0)\in$ $\bar{0}\times0$ but $4\notin(\bar{0}\times0:%
%TCIMACRO{\U{2124} }%
%BeginExpansion
\mathbb{Z}
%EndExpansion
_{4}\times%
%TCIMACRO{\U{2124} }%
%BeginExpansion
\mathbb{Z}
%EndExpansion
)$ and $(\bar{1},0)\notin M$-$rad(\bar{0}\times0)=2%
%TCIMACRO{\U{2124} }%
%BeginExpansion
\mathbb{Z}
%EndExpansion
_{4}\times0$.

(2) The condition $\ker\left(  \varphi\right)  \subseteq N$ in (1) of
Proposition \ref{8} is not necessary. Indeed, let $M_{1}$ and $M_{2}$ be
$R$-modules and $\varphi:M_{1}\times M_{2}\rightarrow M_{1}$ be the projection
epimorphism. If $N_{1}$ and $N_{2}$ are proper submodules of $M_{1}$ and
$M_{2}$ and $N_{1}\times N_{2}$ is a quasi $J$-submodule of $M_{1}\times
M_{2}$, then $\varphi(N_{1}\times N_{2})=N_{1}$ is a quasi $J$-submodule of
$M_{1}$. However, $\ker\left(  \varphi\right)  =0\times M_{2}\nsubseteq
N_{1}\times N_{2}$.
\end{remark}

Let $I$ be a proper ideal of $R$ and $N$ be a submodule of an $R$-module $M$ .
In the following proposition, the notations $Z_{I}(R)$ and $Z_{N}(M)$ denote
the sets $\{r\in R:rs\in I$ for some $s\in R\backslash I\}$ and $\{r\in
R:rm\in N$ for some $m\in M\backslash N\}$.

\begin{proposition}
\label{l}Let $S$ be a multiplicatively closed subset of a ring $R$ such that
$S^{-1}(J(R))=J(S^{-1}R)$ and $M$ be an $R$-module. Then

\begin{enumerate}
\item If $N$ is a quasi $J$-submodule of $M$ and $S^{-1}N\neq S^{-1}M$, then
$S^{-1}N$ is a quasi $J$-submodule of the $S^{-1}R$-module $S^{-1}M$.

\item If $S^{-1}N$ is a quasi $J$-submodule of $S^{-1}M$ and $S\cap
Z_{(J(R)M:M)}(R)=S\cap Z_{M\text{-}rad(N)}(M)=\emptyset$, then $N$ is a quasi
$J$-submodule of $M$.
\end{enumerate}
\end{proposition}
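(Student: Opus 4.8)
The plan is to prove both directions by tracking the defining condition of a quasi $J$-submodule through the localization functor, using two standard facts about localization that I will invoke freely: first, that localization commutes with the $M$-radical in the sense that $S^{-1}(M\text{-}rad(N)) = S^{-1}M\text{-}rad(S^{-1}N)$ (which follows from the commutation of localization with intersections of primes and the correspondence between primes of $M$ not meeting $S$ and primes of $S^{-1}M$), and second, the hypothesis $S^{-1}(J(R)) = J(S^{-1}R)$, which is exactly what lets me identify the obstruction ideal $(J(R)M:M)$ with $(J(S^{-1}R)S^{-1}M : S^{-1}M)$ after localizing.

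For part (1), I would start by fixing $\frac{r}{s} \in S^{-1}R$ and $\frac{m}{t} \in S^{-1}M$ with $\frac{r}{s}\cdot\frac{m}{t} \in S^{-1}N$ and $\frac{r}{s} \notin (J(S^{-1}R)S^{-1}M : S^{-1}M)$. The first containment gives $\frac{rm}{st} = \frac{n}{u}$ for some $n \in N$, $u \in S$, so $w(u\,rm - st\,n) = 0$ for some $w \in S$; absorbing units and elements of $S$, I get $r'm \in N$ for a suitable $r' = wur$ (since $\langle wust \rangle$ acts as a unit after localizing, the key point is that some element of $R$ congruent to $r$ up to an $S$-unit multiplies $m$ into $N$). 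I then need $r \notin (J(R)M:M)$ at the level of $R$: if instead $rM \subseteq J(R)M$, then localizing yields $\frac{r}{1}S^{-1}M \subseteq S^{-1}(J(R))S^{-1}M = J(S^{-1}R)S^{-1}M$, contradicting the assumption on $\frac{r}{s}$ (here is where the hypothesis $S^{-1}(J(R)) = J(S^{-1}R)$ is essential). With $r' \notin (J(R)M:M)$ secured the same way, the quasi $J$-submodule property of $N$ forces $m \in M\text{-}rad(N)$, whence $\frac{m}{t} \in S^{-1}(M\text{-}rad(N)) = S^{-1}M\text{-}rad(S^{-1}N)$, as required.

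For part (2), I would take $r \in R$, $m \in M$ with $rm \in N$ and $r \notin (J(R)M:M)$, and push down to the localization: $\frac{r}{1}\cdot\frac{m}{1} \in S^{-1}N$. To apply the hypothesis on $S^{-1}N$ I must check $\frac{r}{1} \notin (J(S^{-1}R)S^{-1}M : S^{-1}M)$; if it were in, then $\frac{r}{1}S^{-1}M \subseteq J(S^{-1}R)S^{-1}M = S^{-1}(J(R)M)$, and the condition $S \cap Z_{(J(R)M:M)}(R) = \emptyset$ is precisely what lets me conclude $rM \subseteq J(R)M$ back in $M$ (it prevents an element of $S$ from being the ``witness'' that makes $r$ look as though it lands in the obstruction after clearing denominators), contradicting $r \notin (J(R)M:M)$. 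Then $S^{-1}N$ being quasi $J$ gives $\frac{m}{1} \in S^{-1}M\text{-}rad(S^{-1}N) = S^{-1}(M\text{-}rad(N))$, so $\frac{m}{1} = \frac{x}{s}$ for some $x \in M\text{-}rad(N)$, $s \in S$, yielding $w(sm - x) = 0$ for some $w \in S$; the second emptiness condition $S \cap Z_{M\text{-}rad(N)}(M) = \emptyset$ then forces $m \in M\text{-}rad(N)$.

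\textbf{The main obstacle} will be the two descent steps where an equality in the localization must be promoted to an equality (or containment) in $M$ itself, since clearing denominators only yields the relation up to multiplication by an element $w \in S$. This is exactly the role of the two hypotheses $S \cap Z_{(J(R)M:M)}(R) = \emptyset$ and $S \cap Z_{M\text{-}rad(N)}(M) = \emptyset$, so the delicate point is to state the relevant zero-divisor set correctly and verify that the witness $w$ produced by clearing denominators would lie in the forbidden set unless the containment already holds in $M$. I expect the radical-localization identity $S^{-1}(M\text{-}rad(N)) = S^{-1}M\text{-}rad(S^{-1}N)$ to require a short justification via the prime-submodule correspondence, but this is standard and I would cite or sketch it rather than prove it in full.
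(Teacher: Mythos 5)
Your part (1) is correct and is, up to contrapositive phrasing, the paper's own argument: clear denominators to get $wurm\in N$, observe that $wur\notin(J(R)M:M)$ (since $wu/1$ is a unit of $S^{-1}R$, $wur\in(J(R)M:M)$ would force $\frac{r}{s}\in(J(S^{-1}R)S^{-1}M:S^{-1}M)$), apply the definition of quasi $J$-submodule, and push $m\in M\text{-}rad(N)$ up along the inclusion $S^{-1}(M\text{-}rad(N))\subseteq S^{-1}M\text{-}rad(S^{-1}N)$. Likewise your second descent step in part (2) (the radical step, via $w(sm-x)=0$, $ws\in S$, and $S\cap Z_{M\text{-}rad(N)}(M)=\emptyset$) is exactly the paper's.

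The genuine gap is the first descent in part (2). You claim that $\frac{r}{1}S^{-1}M\subseteq S^{-1}(J(R)M)$ together with $S\cap Z_{(J(R)M:M)}(R)=\emptyset$ yields $rM\subseteq J(R)M$. That implication is false: the hypothesis concerns elements of $S$ multiplying \emph{ring} elements into the \emph{ideal} $(J(R)M:M)$, whereas clearing denominators in $\frac{r}{1}S^{-1}M\subseteq S^{-1}(J(R)M)$ only produces, for each $m'\in M$ separately, some $u_{m'}\in S$ with $u_{m'}rm'\in J(R)M$, i.e.\ witnesses in $S$ multiplying \emph{module} elements into the \emph{submodule} $J(R)M$, about which the hypothesis says nothing (that would be the unassumed condition $S\cap Z_{J(R)M}(M)=\emptyset$). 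For a concrete failure take $R=\mathbb{Z}$, $S=\{2^{k}:k\geq0\}$, $M=\bigoplus_{n\geq1}\mathbb{Z}/2^{n}\mathbb{Z}$, $r=1$: then $S^{-1}M=0$, so the displayed containment holds trivially; $(J(R)M:M)=(0:M)=0$ and $Z_{0}(\mathbb{Z})=\{0\}$ is disjoint from $S$; also $S^{-1}J(\mathbb{Z})=0=J(\mathbb{Z}[1/2])$; yet $rM=M\nsubseteq 0=J(R)M$. The repair is the route you announced in your opening paragraph but then abandoned: stay with the colon ideal rather than the submodule. From $\frac{r}{1}\in(J(S^{-1}R)S^{-1}M:S^{-1}M)=S^{-1}((J(R)M:M))$ one gets a \emph{single} $u\in S$ with $ur\in(J(R)M:M)$, and then $S\cap Z_{(J(R)M:M)}(R)=\emptyset$ applies verbatim to give $r\in(J(R)M:M)$; this is what the paper does. (Be aware that the equality $(S^{-1}(J(R)M):S^{-1}M)=S^{-1}((J(R)M:M))$, which fails in the example above, is itself asserted without proof in the paper; it is where the real content lies, and it does hold, e.g., for finitely generated $M$. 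The same remark applies to the hard inclusion of $S^{-1}M\text{-}rad(S^{-1}N)=S^{-1}(M\text{-}rad(N))$ that both you and the paper invoke in part (2).)
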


\begin{proof}
(1) Suppose that $\frac{r}{s_{1}}\frac{m}{s_{2}}\in S^{-1}N$. Then $urm\in N$
for some $u\in S.$ Since $N$ is a quasi $J$-submodule, then either
$ur\in(J(R)M:M)$ or $m\in M$-$rad(N)$. If $ur\in(J(R)M:M)$, then $\frac
{r}{s_{1}}=\frac{ur}{us_{1}}\in S^{-1}(J(R)M:M)=(S^{-1}J(R)$ $S^{-1}%
M:S^{-1}M)=(J(S^{-1}R)$ $S^{-1}M:S^{-1}M).$ If $m\in M$-$rad(N)$, then
$\frac{m}{s_{2}}\in S^{-1}(M$-$rad(N))=S^{-1}M$-$rad(S^{-1}N)$ and we are done.

(2) Let $r\in R$, $m\in M$ and $rm\in N$. Then $\frac{r}{1}\frac{m}{1}\in
S^{-1}N$ which implies that $\frac{r}{1}\in(J(S^{-1}R)S^{-1}M:S^{-1}%
M)=S^{-1}(J(R)M:M)$ or $\frac{m}{1}\in S^{-1}M$-$rad(S^{-1}N)=S^{-1}%
(M$-$rad(N)).$ Hence, either $ur\in(J(R)M:M)$ for some $u\in S$ or $vm\in
M$-$rad(N)$ for some $v\in S$. Thus, our assumptions imply that either
$r\in(J(R)M:M)$ or $m\in M$-$rad(N)$ as needed.
\end{proof}

Following \cite{Hosein}, a proper submodule $N$ of an $R$-module $M$ is called
quasi primary if whenever $r\in R$ and $m\in M$ such that $rm\in N$, then
either $r\in\sqrt{N:M}$ or $m\in M$-$rad(N)$.

\begin{proposition}
\label{5}If $N$ is a quasi-primary submodule of an $R$-module $M$ such that
$(N:M)\subseteq J(R)$, then $N$ is a quasi $J$-submodule of $M.$
\end{proposition}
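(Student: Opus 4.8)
The plan is to verify the defining condition of a quasi $J$-submodule directly, using the quasi-primary hypothesis to split the argument into two cases and the assumption $(N:M)\subseteq J(R)$ to eliminate one of them. First I would take arbitrary $r\in R$ and $m\in M$ with $rm\in N$ and $r\notin(J(R)M:M)$, with the goal of concluding $m\in M\text{-}rad(N)$. Since $N$ is quasi-primary, the relation $rm\in N$ forces either $r\in\sqrt{(N:M)}$ or $m\in M\text{-}rad(N)$; in the latter case there is nothing left to prove, so the whole argument reduces to ruling out the former.

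The key step is to show that $r\in\sqrt{(N:M)}$ is incompatible with $r\notin(J(R)M:M)$. If $r\in\sqrt{(N:M)}$, then $r^{n}\in(N:M)\subseteq J(R)$ for some positive integer $n$. Here I would use that $J(R)$ is the intersection of all maximal ideals of $R$: for each maximal ideal $\mathfrak{m}$ we have $r^{n}\in\mathfrak{m}$, and since $\mathfrak{m}$ is prime this gives $r\in\mathfrak{m}$, whence $r\in\bigcap\mathfrak{m}=J(R)$. In other words, $J(R)$ is a radical ideal, so $r^{n}\in J(R)$ already yields $r\in J(R)$. Finally, $r\in J(R)$ gives $rM\subseteq J(R)M$, that is $r\in(J(R)M:M)$, contradicting the hypothesis $r\notin(J(R)M:M)$.

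Consequently the case $r\in\sqrt{(N:M)}$ cannot occur, and we are forced into $m\in M\text{-}rad(N)$, which is exactly what the quasi $J$-submodule condition requires. I do not anticipate a serious obstacle: the only point deserving care is the passage from $r^{n}\in J(R)$ to $r\in J(R)$, which rests on the prime-intersection (hence radical) nature of $J(R)$ rather than on any module-theoretic input, together with the trivial but essential observation that membership in the ideal $J(R)$ upgrades to membership in the residual $(J(R)M:M)$.
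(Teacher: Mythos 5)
Your proof is correct and follows essentially the same route as the paper's: both reduce, via the quasi-primary hypothesis, to ruling out $r\in\sqrt{(N:M)}$ by combining $(N:M)\subseteq J(R)$ with the fact that $J(R)$ is a radical ideal and that $J(R)\subseteq(J(R)M:M)$. The only difference is presentational — the paper argues contrapositively ($r\notin(J(R)M:M)\Rightarrow r\notin J(R)\Rightarrow r\notin\sqrt{N:M}$) and leaves the radical-ideal step implicit, whereas you argue by contradiction and spell that step out.
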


\begin{proof}
Suppose $N$ is quasi-primary and $(N:M)\subseteq J(R)$. Let $r\in R$ and $m\in
M$ such that $rm\in N$ and $r\notin(J(R)M:M).$ Then $r\notin J(R)$ and so by
assumption $r\notin\sqrt{N:M}$. It follows that $m\in M$-$rad(N)$ as needed.
\end{proof}

Note that if $(N:M)\nsubseteq J(R)$, then the above proposition need not be
true. For example, consider the submodule $N=\left\langle 2\right\rangle $ of
the $%
%TCIMACRO{\U{2124} }%
%BeginExpansion
\mathbb{Z}
%EndExpansion
$-module $%
%TCIMACRO{\U{2124} }%
%BeginExpansion
\mathbb{Z}
%EndExpansion
$. Then $(N:M)=\left\langle 2\right\rangle \nsubseteq J(%
%TCIMACRO{\U{2124} }%
%BeginExpansion
\mathbb{Z}
%EndExpansion
)$. Moreover, $N$ is primary (and so quasi-primary) which is clearly not a
quasi $J$-submodule. In view of \cite[Theorem 2]{Haniece}, we have:

\begin{corollary}
If $N$ is a quasi-primary submodule of an $R$-module $M$ such that $(N:M)$ is
a quasi $J$-ideal of $R$, then $N$ is a quasi $J$-submodule of $M.$
\end{corollary}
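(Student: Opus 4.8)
The plan is to deduce this corollary directly from Proposition \ref{5}, whose hypothesis is the containment $(N:M)\subseteq J(R)$. The entire task therefore reduces to showing that the quasi $J$-ideal condition on $(N:M)$ forces $(N:M)\subseteq J(R)$; once this is established, Proposition \ref{5} applies verbatim to yield that the quasi-primary submodule $N$ is a quasi $J$-submodule of $M$.

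To establish the containment, I would invoke \cite[Theorem 2]{Haniece}, which records that every quasi $J$-ideal $I$ of $R$ satisfies $\sqrt{I}\subseteq J(R)$, and hence $I\subseteq J(R)$. The underlying reason is elementary: by definition a quasi $J$-ideal $I$ has the property that $\sqrt{I}$ is a $J$-ideal, and any $J$-ideal of $R$ is contained in $J(R)$. Indeed, if some $x\in\sqrt{I}$ were not in $J(R)$, then from $x\cdot 1\in\sqrt{I}$ and $x\notin J(R)$ the defining property of a $J$-ideal would give $1\in\sqrt{I}$, contradicting that $\sqrt{I}$ is proper. Applying this with $I=(N:M)$ gives $(N:M)\subseteq\sqrt{(N:M)}\subseteq J(R)$.

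With $(N:M)\subseteq J(R)$ secured and $N$ quasi-primary by hypothesis, Proposition \ref{5} immediately yields that $N$ is a quasi $J$-submodule of $M$, which completes the argument.

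I anticipate no genuine obstacle here, since the content of the corollary is entirely absorbed into its two inputs: Proposition \ref{5} and the ``contained in $J(R)$'' property of quasi $J$-ideals drawn from \cite{Haniece}. The only point requiring a moment of care is confirming that \cite[Theorem 2]{Haniece} delivers a conclusion strong enough to give $(N:M)\subseteq J(R)$ rather than merely a statement about $\sqrt{(N:M)}$; but since $(N:M)\subseteq\sqrt{(N:M)}$ always holds, the radical-level containment is already sufficient for applying Proposition \ref{5}.
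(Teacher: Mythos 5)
Your proposal is correct and takes essentially the same route as the paper: the paper derives this corollary from Proposition \ref{5} precisely ``in view of \cite[Theorem 2]{Haniece}'', i.e.\ by using the fact that a quasi $J$-ideal $(N:M)$ must be contained in $J(R)$, which is exactly your reduction. Your extra paragraph reproving that containment from the definitions (a $J$-ideal cannot contain an element outside $J(R)$ without containing $1$) is a harmless elaboration of the cited result, not a different argument.
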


Following \cite{Rib}, a submodule $N$ of an $R$-module $M$ is called a pure
submodule if $rM\cap N=rN$ for each $r\in R$. Moreover, $N$ is called
divisible if $rN=N$ for each $r\in Reg(R)$, the set of regular elements in $R$.

\begin{proposition}
Let $N$ be a divisible $J$-submodule of an $R$-module $M$ with
$(J(R)M:M)\subseteq Reg(R)$. Then $N$ is pure in $M$.
\end{proposition}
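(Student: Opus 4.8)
The plan is to verify the defining equality $rM\cap N=rN$ for each $r\in R$ by the standard two-inclusion strategy, noting that one inclusion is free. Indeed, $rN\subseteq N$ because $N$ is a submodule, and $rN\subseteq rM$ trivially, so $rN\subseteq rM\cap N$ holds for every $r$ with no hypotheses at all. Thus the entire content of the proposition lies in the reverse inclusion $rM\cap N\subseteq rN$, and this is where the $J$-submodule property and divisibility will be brought to bear.

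To establish $rM\cap N\subseteq rN$, I would fix an arbitrary $x\in rM\cap N$, write $x=rm$ for some $m\in M$, and record that $rm=x\in N$. The natural dichotomy is on whether or not $r$ lies in the residual $(J(R)M:M)$, since that is precisely the hypothesis appearing in the definition of a $J$-submodule. If $r\notin(J(R)M:M)$, then from $rm\in N$ the $J$-submodule property immediately yields $m\in N$, whence $x=rm\in rN$, as desired.

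The complementary case is where divisibility enters. If instead $r\in(J(R)M:M)$, then the hypothesis $(J(R)M:M)\subseteq Reg(R)$ forces $r$ to be a regular element of $R$. Since $N$ is divisible, we have $rN=N$; combining this with $x\in N$ gives $x\in N=rN$, so again $x\in rN$. In both cases $x\in rN$, which completes the nontrivial inclusion and hence the proof that $N$ is pure.

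I do not anticipate a serious obstacle here: the argument is a short case analysis, and the only point requiring care is the correct matching of hypotheses to cases. Specifically, the $J$-submodule condition handles exactly the elements $r\notin(J(R)M:M)$, while the containment $(J(R)M:M)\subseteq Reg(R)$ is engineered to guarantee that the remaining elements $r\in(J(R)M:M)$ are regular, so that divisibility may legitimately be applied. The interplay of these two hypotheses is what makes the two cases cover all of $R$ and produce the same conclusion $x\in rN$.
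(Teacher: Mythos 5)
Your proof is correct and is essentially identical to the paper's own argument: both establish the trivial inclusion $rN\subseteq rM\cap N$ and then split the reverse inclusion into the cases $r\notin(J(R)M:M)$ (where the $J$-submodule property gives $m\in N$) and $r\in(J(R)M:M)$ (where regularity plus divisibility gives $N=rN$). No gaps; nothing to change.
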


\begin{proof}
It is clear that $rN\subseteq rM\cap N$ for each $r\in R$. Let $r\in R$ and
let $n\in rM\cap N$. Then $n=rm\in N$ for some $m\in M$. If $r\in(J(R)M:M)$,
then by assumption, $rM\cap N\subseteq N=rN$. If $r\notin(J(R)M:M)$, then
$m\in N$ since $N$ is a $J$-submodule of $M$ and so $n=rm\in rN$. Thus,
$rM\cap N=rN$ and $N$ is pure in $M$.
\end{proof}

\bigskip Synonymously to the Prime Avoidance Lemma for prime submodules, we have:

\begin{proposition}
\label{av}Let $M$ be an $R$-module such that $J(R)=(J(R)M:M)$ is a quasi
$J$-ideal of $R$. Let $N,N_{1},N_{2},...,N_{k}$ be submodules of $M$ where
$N\subseteq%
%TCIMACRO{\dbigcup \limits_{i=1}^{k}}%
%BeginExpansion
{\displaystyle\bigcup\limits_{i=1}^{k}}
%EndExpansion
N_{i}$. Suppose that $N_{j}$ is a $J$-submodule (resp. quasi $J$-submodule)
with $(N_{i}:M)\nsubseteq J(R)$ for all $i\neq j$. If $N\nsubseteq%
%TCIMACRO{\dbigcup \limits_{i\neq j}^{k}}%
%BeginExpansion
{\displaystyle\bigcup\limits_{i\neq j}^{k}}
%EndExpansion
N_{i}$, then $N\subseteq N_{j}$ (resp. $N\subseteq M$-$rad(N_{j})$).
\end{proposition}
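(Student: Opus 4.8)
The plan is to reduce the statement to the classical Prime Avoidance argument by first extracting, from the hypothesis that $J(R)=(J(R)M:M)$ is a quasi $J$-ideal, the much stronger fact that $J(R)$ is a \emph{prime} ideal of $R$. Recall that the quasi $J$-ideal condition means $\sqrt{J(R)}$ is a $J$-ideal. I would combine this with the standard property that every $J$-ideal is contained in $J(R)$: if $I$ is a $J$-ideal and $a\in I\setminus J(R)$, then there is a maximal ideal $\mathfrak{m}$ with $a\notin\mathfrak{m}$, so writing $1=ra+s$ with $s\in\mathfrak{m}$ gives $ra=1-s\notin\mathfrak{m}$, hence $ra\notin J(R)$; but then $ra=(ra)\cdot 1\in I$ with $ra\notin J(R)$ forces $1\in I$, contradicting properness. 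Applying this to $\sqrt{J(R)}$ yields $\sqrt{J(R)}\subseteq J(R)$, and since the reverse inclusion is automatic, $\sqrt{J(R)}=J(R)$. Thus $J(R)$ is a radical $J$-ideal, and for such an ideal the defining implication $ab\in J(R),\ a\notin J(R)\Rightarrow b\in J(R)$ is exactly primeness. I would isolate this as a preliminary claim, since it is the conceptual heart of the argument.

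Next I would run the avoidance construction. For each $i\neq j$ choose $a_i\in(N_i:M)\setminus J(R)$, which exists because $(N_i:M)\nsubseteq J(R)$, and set $a=\prod_{i\neq j}a_i$. Since $J(R)$ is prime and each $a_i\notin J(R)$, the product satisfies $a\notin J(R)=(J(R)M:M)$. Moreover $aM\subseteq a_iM\subseteq N_i$ for every $i\neq j$. Using the hypothesis $N\nsubseteq\bigcup_{i\neq j}N_i$, fix $x\in N$ with $x\notin N_i$ for all $i\neq j$; since $x\in N\subseteq\bigcup_{i=1}^{k}N_i$, necessarily $x\in N_j$.

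Now take an arbitrary $y\in N$ and consider $x+ay\in N$. For each $i\neq j$ we have $ay\in aM\subseteq N_i$ while $x\notin N_i$, so $x+ay\notin N_i$; hence $x+ay\notin\bigcup_{i\neq j}N_i$, and as $x+ay\in N\subseteq\bigcup_{i=1}^{k}N_i$ this forces $x+ay\in N_j$. Since $x\in N_j$ as well, $ay=(x+ay)-x\in N_j$. Finally I apply the submodule hypothesis on $N_j$ to the relation $ay\in N_j$ with $a\notin(J(R)M:M)$: if $N_j$ is a $J$-submodule we conclude $y\in N_j$, while if $N_j$ is merely a quasi $J$-submodule we conclude $y\in M\text{-}rad(N_j)$. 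As $y\in N$ was arbitrary, this gives $N\subseteq N_j$ (resp. $N\subseteq M\text{-}rad(N_j)$).

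The main obstacle is the first step: recognizing that the quasi $J$-ideal hypothesis on $J(R)=(J(R)M:M)$ upgrades to primeness of $J(R)$, which is precisely what makes $R\setminus J(R)$ multiplicatively closed and thereby guarantees $a=\prod_{i\neq j}a_i\notin J(R)$. Once this is secured, the remainder is the usual Prime Avoidance bookkeeping, with the sole adaptation that the final membership is read off through the ($J$- or quasi $J$-) submodule property of $N_j$ rather than through a prime condition.
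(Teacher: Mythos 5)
Your proof is correct and takes essentially the same route as the paper's: both arguments rest on the primeness of $J(R)$ (which you prove from scratch, where the paper simply cites \cite[Corollary 2]{Haniece}) to guarantee that the product $\prod_{i\neq j}a_i$ lies outside $(J(R)M:M)$, and both use the same translation trick (your $x+ay$ versus the paper's $m+x$) before invoking the (quasi) $J$-submodule property of $N_j$. The only cosmetic difference is that you argue elementwise with the single element $a$, while the paper packages the same step as $IN\subseteq N_j$ for the ideal $I=\bigcap_{i\neq j}(N_i:M)$ and then applies Proposition \ref{eq1}.
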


\begin{proof}
Without loss of generality, assume that $j=k$. First, we show that
$N\cap\left(
%TCIMACRO{\dbigcap \limits_{i=1}^{k-1}}%
%BeginExpansion
{\displaystyle\bigcap\limits_{i=1}^{k-1}}
%EndExpansion
N_{i}\right)  \subseteq N_{k}$. Let $x\in N\cap\left(
%TCIMACRO{\dbigcap \limits_{i=1}^{k-1}}%
%BeginExpansion
{\displaystyle\bigcap\limits_{i=1}^{k-1}}
%EndExpansion
N_{i}\right)  $. Since $N\nsubseteq%
%TCIMACRO{\dbigcup \limits_{i=1}^{k-1}}%
%BeginExpansion
{\displaystyle\bigcup\limits_{i=1}^{k-1}}
%EndExpansion
N_{i}$, there exists an element $m\in N_{k}$ but $m\notin%
%TCIMACRO{\dbigcup \limits_{i=1}^{k-1}}%
%BeginExpansion
{\displaystyle\bigcup\limits_{i=1}^{k-1}}
%EndExpansion
N_{i}$. Then clearly $m+x\in N\backslash\left(
%TCIMACRO{\dbigcup \limits_{i=1}^{k-1}}%
%BeginExpansion
{\displaystyle\bigcup\limits_{i=1}^{k-1}}
%EndExpansion
N_{i}\right)  $. Hence, $m+x\in N_{k}$ and so $x\in N_{k}$. Now, since
$(N_{i}:M)\nsubseteq J(R)$ for all $i\neq k$, there is an element $r_{i}%
\in(N_{i}:M)\backslash J(R)$ for all $i\neq k$. Put $r=%
%TCIMACRO{\dprod \limits_{i=1}^{k-1}}%
%BeginExpansion
{\displaystyle\prod\limits_{i=1}^{k-1}}
%EndExpansion
r_{i}$. Since $J(R)$ is a prime ideal of $R$, \cite[Corollary 2]{Haniece},
then $r\notin J(R)$. Put $I=%
%TCIMACRO{\dbigcap \limits_{i=1}^{k-1}}%
%BeginExpansion
{\displaystyle\bigcap\limits_{i=1}^{k-1}}
%EndExpansion
(N_{i}:M)$. Then $I\nsubseteq J(R)=(J(R)M:M)$ and $IN\subseteq N\cap\left(
%TCIMACRO{\dbigcap \limits_{i=1}^{k-1}}%
%BeginExpansion
{\displaystyle\bigcap\limits_{i=1}^{k-1}}
%EndExpansion
N_{i}\right)  \subseteq N_{k}$. Since $N_{k}$ is a (quasi) $J$-submodule, we
conclude that $N\subseteq N_{j}$ (resp. $N\subseteq M$-$rad(N_{j})$) by
Proposition \ref{eq1}.\bigskip
\end{proof}

For an $R$-module $M$, consider the set of all zero divisors on $M$,
$Z(M)=\left\{  r\in R:rm=0\text{ for some }0\neq m\in M\right\}  $. Following
\cite{Anderson3}, we call an $R$-module $M$ presimplifiable if whenever $r\in
R$ and $m\in M$ such that $rm=m$, then $m=0$ or $r\in U(R)$. Equivalently, $M$
is presimplifiable if and only if $Z(M)\subseteq J(R)$. We recall that the
prime radical of an $R$-module $M$ is the intersection of all prime submodules
in $M$ and is denoted by $Nil(M)$. It is known that for a submodule $N$ of
$M$, there is a one to one correspondence between the prime submodules of
$M/N$ and those of $M$ containing $N$. Hence, we get $m\in M$-$rad(N)$
$\Longleftrightarrow$ $m+N\in Nil(M/N)$.

More generally, let $NZ(M)=\left\{  r\in R:rm=0\text{ for some }m\notin
Nil(M)\right\}  $. Next, we define some generalizations of presimplifiable modules.

\begin{definition}
Let $M$ be an $R$-module.

\begin{enumerate}
\item $M$ is called quasi presimplifiable if $NZ(M)\subseteq J(R)$.

\item $M$ is called $J$-presimplifiable if $Z(M)\subseteq(J(R)M:M)$.

\item $M$ is called a quasi $J$-presimplifiable if $NZ(M)\subseteq(J(R)M:M)$.
\end{enumerate}
\end{definition}

\begin{example}
\label{ep}

\begin{enumerate}
\item Consider the $%
%TCIMACRO{\U{2124} }%
%BeginExpansion
\mathbb{Z}
%EndExpansion
$-module $M=%
%TCIMACRO{\U{2124} }%
%BeginExpansion
\mathbb{Z}
%EndExpansion
_{p}$ for a prime integer $p$. Then $Z(M)=\left\langle p\right\rangle
=(0:M)=(J(%
%TCIMACRO{\U{2124} }%
%BeginExpansion
\mathbb{Z}
%EndExpansion
)M:M)$. Thus, $M$ is a $J$-presimplifiable module that is not presimplifiable.

\item The $%
%TCIMACRO{\U{2124} }%
%BeginExpansion
\mathbb{Z}
%EndExpansion
$-module $M=%
%TCIMACRO{\U{2124} }%
%BeginExpansion
\mathbb{Z}
%EndExpansion
_{4}$ is a quasi $J$-presimplifiable that is not $J$-presimplifiable. Indeed,
we have $Z(M)=2%
%TCIMACRO{\U{2124} }%
%BeginExpansion
\mathbb{Z}
%EndExpansion
$ and $NZ(M)=4%
%TCIMACRO{\U{2124} }%
%BeginExpansion
\mathbb{Z}
%EndExpansion
=(J(%
%TCIMACRO{\U{2124} }%
%BeginExpansion
\mathbb{Z}
%EndExpansion
)M:M)$.

\item Consider the $%
%TCIMACRO{\U{2124} }%
%BeginExpansion
\mathbb{Z}
%EndExpansion
(+)%
%TCIMACRO{\U{2124} }%
%BeginExpansion
\mathbb{Z}
%EndExpansion
_{2}$-module $M=%
%TCIMACRO{\U{2124} }%
%BeginExpansion
\mathbb{Z}
%EndExpansion
(+)%
%TCIMACRO{\U{2124} }%
%BeginExpansion
\mathbb{Z}
%EndExpansion
_{2}$. Then $M$ is a quasi presimplifiable module that is not presimplifiable,
see \cite[Example 5]{Haniece}.
\end{enumerate}
\end{example}

In the following theorem, we characterize $J$-submodules (resp. quasi
$J$-submodules) in terms of $J$-presimplifiable (resp. quasi $J$%
-presimplifiable) modules.

\begin{theorem}
\label{tp}Let $N$ be a submodule of an R-module $M$ with $N\subseteq J(R)M$.
Then $N$ is a quasi $J$-submodule (resp. $J$-submodule) if and only if $M/N$
is a non-zero quasi $J$-presimplifiable (resp. $J$-presimplifiable) $R$-module.
\end{theorem}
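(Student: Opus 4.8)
The plan is to prove both equivalences simultaneously, exploiting the parallel structure between the quasi and non-quasi versions, and to use throughout the correspondence between $M$-$rad(N)$ and $Nil(M/N)$ established just before the theorem, namely $m\in M\text{-}rad(N)\Longleftrightarrow m+N\in Nil(M/N)$. I would first record a useful consequence of the hypothesis $N\subseteq J(R)M$: for the quotient $M/N$ the relevant ideals match up, i.e. $(J(R)(M/N):M/N)$ equals the image of $(J(R)M:M)$, and more importantly that $r\in(J(R)M:M)$ is the condition one can trade against the defining divisibility conditions. I expect to need the identity $J(R)(M/N)=(J(R)M+N)/N=(J(R)M)/N$, the last equality using $N\subseteq J(R)M$, so that $r\in(J(R)(M/N):M/N)$ if and only if $r(M/N)\subseteq(J(R)M)/N$ if and only if $rM\subseteq J(R)M$ (again using $N\subseteq J(R)M$), i.e. if and only if $r\in(J(R)M:M)$. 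Establishing this clean correspondence of the two annihilator-type ideals is the technical heart, and I would isolate it as the first step.

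\smallskip

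For the quasi case, I would argue as follows. Suppose $N$ is a quasi $J$-submodule. To show $M/N$ is quasi $J$-presimplifiable I must show $NZ(M/N)\subseteq(J(R)(M/N):M/N)$. Take $r\in NZ(M/N)$, so $r(m+N)=0+N$ in $M/N$ for some $m+N\notin Nil(M/N)$; this means $rm\in N$ while $m\notin M\text{-}rad(N)$ by the correspondence. Since $N$ is a quasi $J$-submodule, the failure of $m\in M\text{-}rad(N)$ forces $r\in(J(R)M:M)$, which by the first step equals $(J(R)(M/N):M/N)$, giving the inclusion. The non-emptiness condition $M/N\neq 0$ holds because $N$ is proper. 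Conversely, assume $M/N$ is a non-zero quasi $J$-presimplifiable module and let $r\in R$, $m\in M$ with $rm\in N$ and $r\notin(J(R)M:M)$. Then $r(m+N)=0+N$ in $M/N$ and $r\notin(J(R)(M/N):M/N)$, so $r\notin NZ(M/N)$ by presimplifiability; since $r$ annihilates $m+N$, the only way to avoid $r\in NZ(M/N)$ is $m+N\in Nil(M/N)$, i.e. $m\in M\text{-}rad(N)$, as required. The key move is the contrapositive reading of the definition of $NZ(M/N)$.

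\smallskip

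The non-quasi ($J$-submodule versus $J$-presimplifiable) case is entirely parallel but with $Nil(M/N)$ replaced by $0$ and $NZ(M/N)$ replaced by the ordinary zero-divisor set $Z(M/N)$. Concretely, $m\in N\Longleftrightarrow m+N=0$ in $M/N$, so $N$ being a $J$-submodule says: $rm\in N$ and $r\notin(J(R)M:M)$ imply $m\in N$, which translates to $r(m+N)=0$ and $r\notin(J(R)(M/N):M/N)$ imply $m+N=0$, i.e. $Z(M/N)\subseteq(J(R)(M/N):M/N)=(J(R)M:M)$-image, which is exactly $J$-presimplifiability of $M/N$. I would present this as a near-verbatim repetition of the quasi argument, only flagging the two substitutions, rather than rewriting it in full.

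\smallskip

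The main obstacle I anticipate is the first step, the ideal-matching lemma $(J(R)(M/N):M/N)=(J(R)M:M)$ under the hypothesis $N\subseteq J(R)M$; everything downstream is a routine translation through the quotient. Both directions of that equality need the containment $N\subseteq J(R)M$ (to get $J(R)M+N=J(R)M$ and to lift $rM\subseteq J(R)M+N$ back to $rM\subseteq J(R)M$), so I would verify carefully that this single hypothesis is used in both inclusions and that no additional finiteness or multiplication assumption sneaks in. Once that identity is in hand, the proof reduces to unwinding the definitions through the standard prime-submodule correspondence, and I would keep those verifications brief.
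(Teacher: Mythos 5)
Your proposal is correct and takes essentially the same approach as the paper: the paper's proof likewise runs both directions through the correspondence $m\in M\text{-}rad(N)\Longleftrightarrow m+N\in Nil(M/N)$ together with the identity $(J(R)M:M)=((J(R)M)/N:M/N)=(J(R)(M/N):M/N)$, which it asserts in one line and you justify in more detail, and it also dispatches the $J$-submodule case as a parallel repetition. There is no gap.
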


\begin{proof}
Suppose $N$ is a quasi $J$-submodule. Let $r\in NZ(M/N))$ and choose
$m+N\notin Nil(M/N)$ such that $r(m+N)=N$. Then $rm\in N$ and $m\notin
M$-$rad(N)$ since otherwise, if $m\in M$-$rad(N)$, then $m+N\in Nil(M/N)$, a
contradiction. Since $N$ is a quasi $J$-submodule, then $r\in
(J(R)M:M)=((J(R)M)/N:M/N)=(J(R)(M/N):M/N)$ as needed. Conversely, suppose
$M/N$ is a non-zero quasi $J$-presimplifiable and let $r\in R$ and $m\in M$
such that $rm\in N$ and $r\notin(J(R)M:M)=(J(R)(M/N):M/N)$. Then $r.(m+N)=N$
and $r\notin NZ(M/N)$. Therefore, we must have $m+N\in Nil(M/N)$ and then
$m\in M$-$rad(N)$. The proof of the $J$-submodule case is similar.
\end{proof}

\begin{corollary}
Let $N$ be a submodule of an $R$-module $M$ such that $N\subseteq J(R)M$ and
$(J(R)M:M)=J(R)$. Then the following statements are equivalent:

\begin{enumerate}
\item $N$ is a (quasi) $J$-submodule.

\item $M/N$ is a non-zero (quasi) $J$-presimplifiable.

\item $M/N$ is a non-zero (quasi) presimplifiable.
\end{enumerate}
\end{corollary}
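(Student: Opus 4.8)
The plan is to recognize that the equivalence of (1) and (2) is precisely Theorem \ref{tp}, which applies here since $N\subseteq J(R)M$. Consequently the real content of the corollary is the equivalence of (2) and (3), and for this I would show that, under the extra hypothesis $(J(R)M:M)=J(R)$, the generalized presimplifiability conditions on $M/N$ coincide verbatim with their non-$J$ counterparts.

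The key step is to compute the residual of the Jacobson radical submodule of the quotient. First I would record that $(J(R)M)/N=J(R)(M/N)$, which holds precisely because $N\subseteq J(R)M$, together with the residual identity $((J(R)M)/N:M/N)=(J(R)M:M)$ that was already established inside the proof of Theorem \ref{tp}. Feeding in the hypothesis $(J(R)M:M)=J(R)$ then gives
\[
(J(R)(M/N):M/N)=(J(R)M:M)=J(R).
\]

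Once this identity is in place the definitions simply collapse. In the quasi case, $M/N$ is quasi $J$-presimplifiable if and only if $NZ(M/N)\subseteq(J(R)(M/N):M/N)=J(R)$, which is exactly the requirement that $M/N$ be quasi presimplifiable; hence (2) $\Leftrightarrow$ (3). In the non-quasi case, $M/N$ is $J$-presimplifiable if and only if $Z(M/N)\subseteq(J(R)(M/N):M/N)=J(R)$, i.e. if and only if $M/N$ is presimplifiable; so again (2) $\Leftrightarrow$ (3). The non-zero clause is shared by both (2) and (3) and reflects only the properness of $N$, so it requires no separate argument.

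I do not anticipate a serious obstacle: the single nontrivial ingredient is the residual computation, and that is inherited almost directly from Theorem \ref{tp} rather than proved from scratch. The one point demanding care is to keep the inclusion $N\subseteq J(R)M$ explicit throughout, since it is exactly what legitimizes writing $(J(R)M)/N$ as a submodule of $M/N$ and what makes it equal to $J(R)(M/N)$; without it the chain of equalities above would break.
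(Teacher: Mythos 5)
Your proposal is correct and is essentially the argument the paper intends: the equivalence (1) $\Leftrightarrow$ (2) is exactly Theorem \ref{tp}, and (2) $\Leftrightarrow$ (3) follows because the identity $(J(R)(M/N):M/N)=((J(R)M)/N:M/N)=(J(R)M:M)=J(R)$ (valid since $N\subseteq J(R)M$, and already used inside the proof of Theorem \ref{tp}) makes the definitions of (quasi) $J$-presimplifiable and (quasi) presimplifiable for $M/N$ coincide. Your care in flagging where $N\subseteq J(R)M$ is needed is exactly the right point of attention.
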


Recall that for an $R$-module $M$, $T(M)=\left\{  m\in M:ann_{R}%
(m)\neq0\right\}  $. It is clear that if $N$ is an $sr$-submodule of $M$, then
$N\subseteq T(M)$.

\begin{proposition}
Let $M$ be an $R$-module.

(1) If $M$ is $J$-presimplifiable and $N$ is an $r$-submodule of $M$, then $N$
is a (quasi) $J$-submodule of $M$.

(2) If $N$ is an $sr$-submodule of $M$ with $T(M)=N\subseteq J(R)M$, then $N$
is a (quasi) $J$-submodule of $M$.
\end{proposition}

\begin{proof}
(1) Suppose $N$ is an $r$-submodule and let $r\in R$ and $m\in M$ such that
$rm\in N$ and $r\notin(J(R)M:M)$. Since $M$ is $J$-presimplifiable, then
$r\notin Z(M)$ and so clearly $ann_{M}(r)=0$. Hence, $m\in N\subseteq
M$-$rad(N)$ as $N$ is an $r$-submodule and we are done.

(2) Suppose $N$ is an $sr$-submodule of $M$ with $N=T(M)$. Let $r\in R$ and
$m\in M$ such that $rm\in N$. If $m\notin M$-$rad(N)$, then $m\notin T(M)$ and
so $ann_{R}(m)=0$. By assumption, we get $r\in(N:M)\subseteq(J(R)M:M)$ and $N$
is a (quasi) $J$-submodule of $M$.
\end{proof}

\section{Quasi $J$-submodule in multiplication modules}

In this section we study quasi $J$-submodules in some special types of
modules. We give several properties and characterizations of quasi
$J$-submodules in finitely generated faithful multiplication modules.
Moreover, we determine conditions on a submodule $N$ of $M$ and an ideal $I$
of $R$ for which $I(+)N$ is a quasi $J$-ideal in $R(+)M$.

We start by the following lemma.

\begin{lemma}
\cite{Smith}\label{9}Let $M$ be a finitely generated faithful multiplication
$R$-module, $N$ be a proper submodule of $M$ and $I$ be an ideal of $R$. Then

\begin{enumerate}
\item $M$-$rad(N)=\sqrt{N:M}M$.

\item $IM:M=I$.

\item $(IN:M)=I(N:M)$.
\end{enumerate}
\end{lemma}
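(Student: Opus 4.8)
The plan is to establish part (2) first, since parts (1) and (3) both follow from it together with the defining identity $N=(N:M)M$ of a multiplication module. The substance of part (2) is the cancellation law for finitely generated faithful multiplication modules: the inclusion $I\subseteq(IM:M)$ is immediate, so everything reduces to showing that $rM\subseteq IM$ forces $r\in I$.

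For part (2) I would argue locally. The key structural fact is that for every maximal ideal $\mathfrak{m}$ of $R$ one has $M_{\mathfrak{m}}\cong R_{\mathfrak{m}}$. This comes from three standard reductions: localizing a multiplication module gives a multiplication module; a finitely generated multiplication module over a local ring is cyclic (its reduction mod the maximal ideal is a multiplication module over a field, hence of dimension at most one, and then Nakayama applies), so $M_{\mathfrak{m}}=R_{\mathfrak{m}}\,\bar m$; and, because $M$ is finitely generated, annihilators commute with localization, so the faithfulness $(0:M)=0$ descends to $(0:M_{\mathfrak{m}})=0$, which forces $M_{\mathfrak{m}}$ to be not merely cyclic but free of rank one. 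Now suppose $rM\subseteq IM$. Localizing at an arbitrary maximal ideal $\mathfrak{m}$ and using that $M_{\mathfrak{m}}$ is free of rank one turns this into $\tfrac{r}{1}R_{\mathfrak{m}}\subseteq I_{\mathfrak{m}}$, i.e. $\tfrac{r}{1}\in I_{\mathfrak{m}}$. Since this holds at every maximal ideal and membership in an ideal is a local property (the ideal $(I:_R r)$ then lies in no maximal ideal, hence equals $R$), we conclude $r\in I$, proving $(IM:M)=I$.

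Granting (2), part (3) is short: since $M$ is a multiplication module, $N=(N:M)M$, so $IN=I(N:M)M=\big(I(N:M)\big)M$, and applying (2) to the ideal $I(N:M)$ gives $(IN:M)=\big(I(N:M)M:M\big)=I(N:M)$. For part (1) I would first record the prime correspondence in a multiplication module: the prime submodules of $M$ are exactly the $\mathfrak{p}M$ with $\mathfrak{p}$ prime, and by (2) one has $N\subseteq\mathfrak{p}M\iff(N:M)\subseteq\mathfrak{p}$. Writing $J=(N:M)$, this gives
\[
M\text{-}rad(N)=\bigcap_{\mathfrak{p}\supseteq J}\mathfrak{p}M .
\]
The inclusion $\sqrt{J}\,M\subseteq M\text{-}rad(N)$ is immediate from $\sqrt{J}=\bigcap_{\mathfrak{p}\supseteq J}\mathfrak{p}$. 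For the reverse inclusion I would again exploit (2): if $m\in\bigcap_{\mathfrak{p}}\mathfrak{p}M$ then $Rm\subseteq\mathfrak{p}M$, so $(Rm:M)\subseteq(\mathfrak{p}M:M)=\mathfrak{p}$ for every such $\mathfrak{p}$, whence $(Rm:M)\subseteq\sqrt{J}$ and therefore $m\in Rm=(Rm:M)M\subseteq\sqrt{J}\,M$. This yields $M\text{-}rad(N)=\sqrt{N:M}\,M$.

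The main obstacle is entirely concentrated in part (2), and within it in the structural fact $M_{\mathfrak{m}}\cong R_{\mathfrak{m}}$; once this local cancellation is in hand, parts (1) and (3) are purely formal manipulations of residuals. The two delicate points are that a finitely generated multiplication module localizes to a cyclic module over each local ring, and that faithfulness descends to each $M_{\mathfrak{m}}$ (so the generator is free, not merely cyclic) — and it is precisely here that the hypotheses \emph{finitely generated}, \emph{faithful}, and \emph{multiplication} are all genuinely used.
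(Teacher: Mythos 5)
Your proposal is correct in substance, but there is nothing in the paper to compare it against: the paper does not prove this lemma at all, it imports it wholesale by citation from \cite{Smith} (with the prime-correspondence ingredient going back to \cite{Bast}). So what you have done differently is simply to prove it. Your route through the cancellation law (2) is the standard one, and all three reductions you invoke for it are sound: localizations of multiplication modules are multiplication modules, a finitely generated multiplication module over a local ring is cyclic by the vector-space argument plus Nakayama, and faithfulness descends to $M_{\mathfrak{m}}$ because annihilators of finitely generated modules commute with localization; then $rM\subseteq IM$ collapses at each maximal ideal to $\frac{r}{1}\in I_{\mathfrak{m}}$, and $(I:_{R}r)$ lies in no maximal ideal, so $r\in I$. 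Your derivations of (3) and of both inclusions in (1) from (2) are also correct; in particular the reverse inclusion in (1), where $m\in\bigcap\mathfrak{p}M$ forces $(Rm:M)\subseteq\sqrt{N:M}$ and hence $m\in(Rm:M)M\subseteq\sqrt{N:M}\,M$, uses the multiplication hypothesis in exactly the right way. What your approach buys over the paper's citation is a self-contained account that shows precisely where each of the three hypotheses (finitely generated, faithful, multiplication) enters.

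One step is asserted rather than proved: the ``prime correspondence,'' i.e.\ that the prime submodules of $M$ are exactly the $\mathfrak{p}M$ with $\mathfrak{p}$ prime. The direction needed for $M$-$rad(N)\subseteq\sqrt{N:M}\,M$ is that each $\mathfrak{p}M$ with $\mathfrak{p}\supseteq(N:M)$ prime is in fact a prime submodule of $M$, and this is not automatic; it does, however, follow from your own part (2): if $rm\in\mathfrak{p}M$ with $r\notin\mathfrak{p}$, write $Rm=JM$ by the multiplication property, so that $rJ\subseteq(rJM:M)\subseteq(\mathfrak{p}M:M)=\mathfrak{p}$, whence $J\subseteq\mathfrak{p}$ and $m\in JM\subseteq\mathfrak{p}M$; properness of $\mathfrak{p}M$ also follows from (2). (The other direction --- every prime submodule $P$ equals $(P:M)M$ with $(P:M)$ prime --- is immediate from the multiplication property and the definition of prime submodule.) With that short argument inserted, your proof is complete and self-contained; alternatively this correspondence can be cited from \cite{Bast}, which is already in the paper's bibliography.
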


In view of the properties in Lemma \ref{9}, we give the following
characterizations of quasi $J$-submodules of finitely generated faithful
multiplication modules.

\begin{theorem}
\label{1}Let $I$ be an ideal of a ring $R$ and $N$ be a submodule of a
finitely generated faithful multiplication $R$-module $M$. Then
\end{theorem}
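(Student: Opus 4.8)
The plan is to translate every submodule condition into a statement about residual ideals and then invoke the definition of a quasi $J$-ideal. The whole argument rests on the dictionary supplied by Lemma \ref{9}: in a finitely generated faithful multiplication module one has $(J(R)M:M)=J(R)$, $M$-$rad(N)=\sqrt{N:M}\,M$, and, applying Lemma \ref{9}(2) to the ideal $\sqrt{N:M}$, $(M$-$rad(N):M)=\sqrt{N:M}$. Combined with Lemma \ref{9}(3), which gives $(r\langle m\rangle:M)=r(\langle m\rangle:M)$, this lets me rewrite the membership $m\in M$-$rad(N)$ as the ideal containment $(\langle m\rangle:M)\subseteq\sqrt{N:M}$, using that $K\subseteq L$ is equivalent to $(K:M)\subseteq(L:M)$ in such a module. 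The central claim I would prove is that $N$ is a quasi $J$-submodule if and only if $(N:M)$ is a quasi $J$-ideal, i.e. if and only if $\sqrt{N:M}$ is a $J$-ideal; the remaining clauses in the list (such as $M$-$rad(N)$ being a $J$-submodule, or the analogous statement for $IM$) then drop out of the same computation.

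For the easy direction, suppose $(N:M)$ is a quasi $J$-ideal, so $\sqrt{N:M}$ is a $J$-ideal, and take $r\in R$, $m\in M$ with $rm\in N$ and $r\notin(J(R)M:M)=J(R)$. From $r\langle m\rangle=\langle rm\rangle\subseteq N$ I obtain $r(\langle m\rangle:M)=(\langle rm\rangle:M)\subseteq(N:M)\subseteq\sqrt{N:M}$ by Lemma \ref{9}(3). Hence for every $a\in(\langle m\rangle:M)$ one has $ra\in\sqrt{N:M}$ with $r\notin J(R)$, so the $J$-ideal property forces $a\in\sqrt{N:M}$. Thus $(\langle m\rangle:M)\subseteq\sqrt{N:M}=(M$-$rad(N):M)$, which gives $m\in M$-$rad(N)$ and shows $N$ is a quasi $J$-submodule.

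For the converse I would use the ideal form of the definition from Proposition \ref{eq1}(3). Assuming $N$ is a quasi $J$-submodule, take $a,b\in R$ with $ab\in\sqrt{N:M}$ and $a\notin J(R)$; then $(ab)^{n}\in(N:M)$ for some $n$, so $a^{n}(b^{n}M)\subseteq N$. The one genuinely important point is that I may pass from $a\notin J(R)$ to $a^{n}\notin J(R)$: this works because $J(R)$, being an intersection of maximal (hence prime) ideals, is a radical ideal, so $\sqrt{J(R)}=J(R)$. Consequently $\langle a^{n}\rangle\nsubseteq(J(R)M:M)$, and Proposition \ref{eq1}(3) applied to $\langle a^{n}\rangle(b^{n}M)\subseteq N$ yields $b^{n}M\subseteq M$-$rad(N)=\sqrt{N:M}\,M$, i.e. $b^{n}\in\sqrt{N:M}$ and so $b\in\sqrt{N:M}$. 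Together with the fact that $\sqrt{N:M}$ is proper (a proper submodule of a finitely generated module lies in a maximal, hence prime, submodule, so $M$-$rad(N)\neq M$), this shows $\sqrt{N:M}$ is a $J$-ideal, i.e. $(N:M)$ is a quasi $J$-ideal.

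I expect the main obstacle to be precisely the step converting $a\notin J(R)$ into $a^{n}\notin J(R)$; naively the Jacobson radical is not closed under extracting roots, and the argument only goes through once one records that $J(R)$ equals its own radical. Beyond that, the remaining items are bookkeeping: the equivalence ``$M$-$rad(N)$ is a $J$-submodule iff $\sqrt{N:M}$ is a $J$-ideal'' is proved by the same residual manipulation, now without radicals, using $(M$-$rad(N):M)=\sqrt{N:M}$ and Lemma \ref{9}(3), while any clause phrased through the ideal $I$ — for instance $IM$ being a quasi $J$-submodule exactly when $I$ is a quasi $J$-ideal — follows by specializing $N=IM$ and invoking $(IM:M)=I$ from Lemma \ref{9}(2).
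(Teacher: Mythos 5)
Your central claim and its proof are correct, and the route is essentially the paper's own: the same dictionary from Lemma \ref{9} (namely $(J(R)M:M)=J(R)$, $M$-$rad(N)=\sqrt{N:M}\,M$, $(IM:M)=I$), the same residual computation $r(\langle m\rangle:M)=(\langle rm\rangle:M)\subseteq\sqrt{N:M}$ in the forward direction, and the same passage from submodule containments to ideal containments in the converse. The only organizational difference is that you take the equivalence ``$N$ is a quasi $J$-submodule iff $(N:M)$ is a quasi $J$-ideal'' as primary and obtain the statement about $IM$ by specializing $N=IM$, whereas the paper proves the $IM$ statement first and then gets the $(N:M)$ statement from $N=(N:M)M$; these are interchangeable. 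One place where your write-up is actually more self-contained: in the converse you verify the definition of quasi $J$-ideal directly on $\sqrt{N:M}$, taking powers and using that $J(R)$ is a radical ideal (an intersection of primes) to pass from $a\notin J(R)$ to $a^{n}\notin J(R)$. The paper instead checks the element test on $I$ itself ($ab\in I$, $a\notin J(R)$ $\Rightarrow$ $b\in\sqrt{I}$) and concludes $I$ is a quasi $J$-ideal, which implicitly leans on the characterization of quasi $J$-ideals in \cite{Haniece}; your explicit radicality step is precisely what makes that characterization valid.

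There is, however, a part of the theorem your proposal does not cover. The theorem has four enumerated clauses, and the fourth states: if $I$ is a quasi $J$-ideal of $R$ and $N$ is a quasi $J$-submodule of $M$, then $IN$ is a quasi $J$-submodule of $M$. This does not ``drop out of the same computation'': it requires the extra input that a product of quasi $J$-ideals is again a quasi $J$-ideal (\cite[Proposition 4]{Haniece}). The paper's argument runs: $(N:M)$ is a quasi $J$-ideal by the second clause, hence $I(N:M)$ is a quasi $J$-ideal, $IN=I(N:M)M$ is proper, and the first clause then applies. Also note that the clause you mention about $M$-$rad(N)$ being a $J$-submodule is not part of this theorem at all; it belongs to Theorem \ref{3}.
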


\begin{enumerate}
\item $I$ is a quasi $J$-ideal of $R$ if and only if $IM$ is a quasi
$J$-submodule of $M$.

\item $N$ is a quasi $J$-submodule of $M$ if and only if $(N:M)$ is a quasi
$J$-ideal of $R$.

\item $N$ is a quasi $J$-submodule of $M$ if and only if $N=IM$ for some quasi
$J$-ideal of $R$.

\item If $I$ is a quasi $J$-ideal of $R$ and $N$ is a quasi $J$-submodule of
$M$, then $IN$ is a quasi $J$-submodule of $M$.
\end{enumerate}

\begin{proof}
(1) Suppose $I$ is a quasi $J$-ideal of $R$. If $IM=M$, then $I=(IM:M)=R$, a
contradiction. Thus, $IM$ is proper in $M$. Now, let $r\in R$ and $m\in M$
such that $rm\in IM$ and $r\notin(J(R)M:M)=J(R)$. Then
$r((m):M)=((rm):M)\subseteq(IM:M)\subseteq(\sqrt{I}M:M)=\sqrt{I}$. As $I$ is a
quasi $J$-ideal of $R$, we conclude that $((m):M)\subseteq\sqrt{I}$. Thus,
$m\in((m):M)M\subseteq\sqrt{I}M=M$-$rad(IM)$. Conversely, suppose $IM$ is a
quasi $J$-submodule of $M$. Then clearly $I$ is proper in $R$. Let $a,b\in R$
such that $ab\in I$ and $a\notin J(R)=(J(R)M:M)$. Since $abM\in IM$ and $IM$
is a quasi $J$-submodule, then $bM\subseteq M$-$rad(IM)=\sqrt{I}M$. Therefore,
$b\in\sqrt{I}M:M=\sqrt{I}$ and $I$ is a quasi $J$-ideal of $R$.

(2) Follows by (1) since $N=(N:M)M$.

(3) Follows by choosing $I=(N:M)$ and using (2).

(4) Suppose $I$ is a quasi $J$-ideal of $R$ and $N$ is a quasi $J$-submodule
of $M$. Now, $(N:M)$ is a quasi $J$-ideal of $R$ by (2) and so $I(N:M)$ is
also quasi $J$-ideal by \cite[Proposition 4]{Haniece}. Moreover, $IN=I(N:M)M$
is proper in $M$ since otherwise, $I(N:M)=R$, a contradiction. By using (1),
we conclude that $IN$ is a quasi $J$-submodule of $M$.
\end{proof}

However, the equivalence in (2) of Theorem \ref{1} can not be achieved if $M$
is not finitely generated faithful multiplication. For example, consider the $%
%TCIMACRO{\U{2124} }%
%BeginExpansion
\mathbb{Z}
%EndExpansion
$-module $M=$ $%
%TCIMACRO{\U{2124} }%
%BeginExpansion
\mathbb{Z}
%EndExpansion
\times%
%TCIMACRO{\U{2124} }%
%BeginExpansion
\mathbb{Z}
%EndExpansion
$ and the submodule $N=2%
%TCIMACRO{\U{2124} }%
%BeginExpansion
\mathbb{Z}
%EndExpansion
\times0$ of $M$. Then clearly $(N:M)=0$ is a quasi $J$-ideal of $%
%TCIMACRO{\U{2124} }%
%BeginExpansion
\mathbb{Z}
%EndExpansion
$, but $N$ is not a quasi $J$-submodule of $M$. In fact, $2.(1,0)\in N$ but
neither $2\in(J(%
%TCIMACRO{\U{2124} }%
%BeginExpansion
\mathbb{Z}
%EndExpansion
)M:M)=0$ nor $(1,0)\in M$-$rad(N)=N$.

\begin{proposition}
Let $N$ be a submodule of a faithful multiplication $R$-module $M$. Let $I$ be
a finitely generated faithful multiplication ideal of $R$. Then

\begin{enumerate}
\item If $IN$ is a $J$-submodule of $M$, then either $I$ is a $J$-ideal of $R$
or $N$ is a $J$-submodule of $M$.

\item If $\sqrt{I}$ is a finitely generated multiplication ideal of $R$ and
$\sqrt{I}N$ is a quasi $J$-submodule of $M$, then either $I$ is a quasi
$J$-ideal of $R$ or $N$ is a quasi $J$-submodule of $M$.
\end{enumerate}
\end{proposition}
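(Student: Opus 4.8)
The plan is to prove both parts by contraposition, translating the module conditions into the residual ideals by means of the multiplication hypotheses. For part (1), I would assume that $IN$ is a $J$-submodule while $I$ is \emph{not} a $J$-ideal, and deduce that $N$ must be a $J$-submodule. Since $I$ is not a $J$-ideal, there exist $a,b\in R$ with $ab\in I$, $a\notin J(R)$ and $b\notin I$. Given an arbitrary test pair $r\in R$, $m\in M$ with $rm\in N$ and $r\notin(J(R)M:M)$, the identity
\[
r\cdot(abm)=(ab)(rm)\in IN
\]
holds because $ab\in I$ and $rm\in N$. Grouping the product this way is deliberate: the outside factor is exactly $r$, which already satisfies $r\notin(J(R)M:M)$, so the $J$-submodule property of $IN$ immediately yields $abm\in IN$ without having to control the product $ar$.

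The remaining task is to cancel the contribution of $I$ and recover $m\in N$. Here I would pass to residual ideals, using that $M$ is faithful multiplication: every submodule equals $(K:M)M$, submodule containment is equivalent to containment of residuals, and $IN=(I(N:M))M$. This is exactly where the hypothesis that $I$ is a \emph{finitely generated faithful multiplication} ideal is used: such an ideal is a cancellation ideal, and together with residual identities in the spirit of Lemma \ref{9} (notably $(IK:M)=I(K:M)$ and $M\text{-}rad(K)=\sqrt{(K:M)}M$) this should let me rewrite $abm\in IN$ as an inclusion of residual ideals and cancel $I$, while the condition $a\notin J(R)$ (recall $J(R)\subseteq(J(R)M:M)$) disposes of the factor $a$. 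The output is $m\in N$, so $N$ is a $J$-submodule, which is the contrapositive we wanted.

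Part (2) follows the same skeleton, with $I$ replaced throughout by $\sqrt{I}$, ordinary membership replaced by membership in the appropriate $M$-radical, and every $J$-notion replaced by its quasi counterpart. I would use that $I$ is a quasi $J$-ideal exactly when $\sqrt{I}$ is a $J$-ideal, that being a quasi $J$-submodule is the $M\text{-}rad$ version of the $J$-submodule condition (Proposition \ref{eq1}), and that $M\text{-}rad(\sqrt{I}N)=\sqrt{(\sqrt{I}N:M)}M$ by Lemma \ref{9}(1). Assuming $\sqrt{I}N$ is a quasi $J$-submodule and $I$ is not a quasi $J$-ideal, so that $\sqrt{I}$ is not a $J$-ideal and produces witnesses $a,b$ with $ab\in\sqrt{I}$, $a\notin J(R)$, $b\notin\sqrt{I}$, the same factorisation $r\cdot(abm)=(ab)(rm)\in\sqrt{I}N$ gives $abm\in M\text{-}rad(\sqrt{I}N)$, and cancelling the finitely generated multiplication ideal $\sqrt{I}$ should yield $m\in M\text{-}rad(N)$.

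The step I expect to be the main obstacle is precisely this cancellation/descent: passing from $abm\in IN$ to $m\in N$ (resp. from $abm\in M\text{-}rad(\sqrt{I}N)$ to $m\in M\text{-}rad(N)$). The delicacy is that the residual identities of Lemma \ref{9} are stated for finitely generated faithful multiplication \emph{modules} $M$, whereas here $M$ is only faithful multiplication and the finite-generation, faithfulness and multiplication hypotheses live on the \emph{ideal} $I$ (resp. $\sqrt{I}$). Carrying out the cancellation using only those properties of $I$, and ensuring that the extraneous factor $b\notin I$ (resp. $b\notin\sqrt{I}$) does not obstruct it, is where the real work of the proof lies.
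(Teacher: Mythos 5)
Your opening move is fine: from $r(abm)=(ab)(rm)\in IN$ and $r\notin(J(R)M:M)$, the $J$-submodule property of $IN$ does give $abm\in IN$. The genuine gap is everything after that, and it is not merely ``real work left to do'' --- the cancellation you are counting on cannot perform the step. Cancellation for a finitely generated faithful multiplication ideal $I$ is the statement $N=(IN:_{M}I)$ of Lemma \ref{2}(2), equivalently $IK\subseteq IL\Rightarrow K\subseteq L$: it cancels the \emph{whole ideal} $I$ occurring as a factor on the left. Your relation $abm\in IN$ reads $\left\langle ab\right\rangle \left\langle m\right\rangle \subseteq IN$, where the left-hand factor is the single element $ab$, not $I$; there is nothing to cancel, and the relation can hold for $m$ far outside $N$ (for instance, whenever $ab\in(IN:M)$ it holds for \emph{every} $m\in M$). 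The fallback of stripping $a$ and then $b$ via the $J$-submodule property of $IN$ also fails: stripping $a$ requires $a\notin(J(R)M:M)$, whereas you only have $a\notin J(R)$, and $J(R)\subseteq(J(R)M:M)$ is the wrong-way containment --- equality needs $M$ finitely generated (Lemma \ref{9}(2)), which is precisely not assumed here. And $b\notin I$ yields no exclusion at all relative to $J(R)$ or $(J(R)M:M)$; the witness $b$ may even lie in $J(R)$. So the pair $(a,b)$ extracted from ``$I$ is not a $J$-ideal'' carries no usable information, and the contrapositive framing is a dead end.

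The paper never contraposes and never uses such witnesses; its disjunction comes from a case split. If $N=M$, then $(IN:M)=I(N:M)=I$ is a $J$-ideal (the first alternative), since $J$-submodules of faithful multiplication modules correspond to $J$-ideals through their residuals. If $N\subsetneq M$, the paper proves the second alternative outright: by Lemma \ref{2}(2), $N=(IN:_{M}I)$, hence $(N:M)=(I(N:M):I)$. To show $(N:M)$ is a $J$-ideal, take $ab\in(N:M)$ with $a\notin J(R)$ and multiply by \emph{all} of $I$: then $Iab\subseteq I(N:M)=(IN:M)$, and since $(IN:M)$ is a $J$-ideal of $R$, the factor $a$ is stripped at the level of ideals of $R$, where exactly $a\notin J(R)$ is what is needed; this gives $Ib\subseteq I(N:M)$, and now the left side does carry the full factor $I$, so residuation cancels it: $b\in(I(N:M):I)=(N:M)$. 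Finally, $(N:M)$ being a $J$-ideal transfers back to $N$ being a $J$-submodule. The two moves you are missing are exactly these: multiply the test relation by the entire ideal $I$ (resp.\ $\sqrt{I}$) so that cancellation in its correct residuated form applies, and strip $a$ on residual ideals in $R$ rather than on elements of $M$. Part (2) is the same scheme with $\sqrt{I}$, the quasi notions, and the identity $M\text{-}rad(N)=(\sqrt{I}\,(M\text{-}rad(N)):_{M}\sqrt{I})$.
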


\begin{proof}
(1) If $N=M$, then $(IN:M)=I(N:M)=IR=I$ is a $J$-ideal of $R$ by
\cite[Corollary 3.4]{Hani}. Suppose $N\varsubsetneq M$. Since $I$ is finitely
generated faithful multiplication, we have $N=(IN:_{M}I)$, \cite[Lemma
2.4]{Majed}. Hence, one can easily verify that $(N:M)=((IN:_{M}%
I):M)=(I(N:M):I)$. Let $a,b\in R$ such that $ab\in(N:M)$ and $a\notin J(R)$.
Then $Iab\subseteq I(N:M)=(IN:M)$ and so $Ib\subseteq I(N:M)$ as $(IN:M)$ is a
$J$-ideal. It follows that $b\in(I(N:M):I)=(N:M)$ and so $(N:M)$ is a
$J$-ideal of $R$. The result follows again by \cite[Corollary 3.4]{Hani}.

(2) If $N=M$, then $\sqrt{I}=\sqrt{I}(N:M)=(\sqrt{I}N:M)$ is a quasi $J$-ideal
of $R$ by (2) of Theorem \ref{1}. It follows clearly that $I$ is a quasi
$J$-ideal. Suppose $N\varsubsetneq M$ and note again by \cite[Lemma
2.4]{Majed} that $M$-$rad(N)=(\sqrt{I}(M$-$rad(N)):_{M}\sqrt{I})$. Let $rm\in
N$ and $r\notin J(R)$ for $r\in R$ and $m\in M$. Then $\sqrt{I}rm\subseteq
\sqrt{I}N$ and so $\sqrt{I}m\subseteq M$-$rad(\sqrt{I}N)=\sqrt{I}(M$%
-$rad(N))$. It follows that $m\in(\sqrt{I}(M$-$rad(N)):_{M}\sqrt{I}%
)=M$-$rad(N)$ and $N$ is a quasi $J$-submodule of $M$.
\end{proof}

\begin{theorem}
\label{3}Let $N$ be a proper submodule of a finitely generated faithful
multiplication $R$-module $M$. The following are equivalent:
\end{theorem}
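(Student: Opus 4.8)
The plan is to reduce every condition in the theorem to an equivalent statement about the residual ideal $(N:M)$ and then invoke the dictionary between submodules of $M$ and ideals of $R$ that is available because $M$ is finitely generated faithful multiplication. Throughout I would lean on Lemma \ref{9}; in particular its first part, $M$-$rad(N)=\sqrt{(N:M)}\,M$, combined with its second part, $(IM:M)=I$, yields the key observation
\[
\big(M\text{-}rad(N):M\big)=\big(\sqrt{(N:M)}\,M:M\big)=\sqrt{(N:M)}.
\]
This identity is what lets me translate freely between the module radical $M$-$rad(N)$ and the ideal radical $\sqrt{(N:M)}$, and it is the engine of the whole argument.

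With this in hand the equivalences unwind as a single chain of biconditionals. By Theorem \ref{1}(2), $N$ is a quasi $J$-submodule of $M$ precisely when $(N:M)$ is a quasi $J$-ideal of $R$; by the definition of a quasi $J$-ideal recalled in the introduction (see \cite{Haniece}), this holds precisely when $\sqrt{(N:M)}$ is a $J$-ideal of $R$; and applying the $J$-submodule/$J$-ideal correspondence for finitely generated faithful multiplication modules in \cite[Corollary 3.4]{Hani} to the submodule $M$-$rad(N)$ — whose residual equals $\sqrt{(N:M)}$ by the displayed identity — converts ``$\sqrt{(N:M)}$ is a $J$-ideal'' into ``$M$-$rad(N)$ is a $J$-submodule of $M$.'' Hence $N$ is a quasi $J$-submodule if and only if $(N:M)$ is a quasi $J$-ideal, if and only if $\sqrt{(N:M)}$ is a $J$-ideal, if and only if $M$-$rad(N)$ is a $J$-submodule of $M$. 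Any remaining listed condition, such as ``$N=IM$ for some quasi $J$-ideal $I$'' or the ideal–submodule reformulation of Proposition \ref{eq1}, is just a repackaging of one of these links, obtained by taking $I=(N:M)$ and using $N=(N:M)M$.

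The step I would check most carefully — and the main obstacle — is the application of \cite[Corollary 3.4]{Hani} to $M$-$rad(N)$ rather than to $N$ itself. This requires two verifications. First, $M$-$rad(N)$ must be a \emph{proper} submodule so that the cited result applies; this follows because $N$ is proper and therefore contained in some prime submodule, whence $M$-$rad(N)\neq M$. Second, the residual of $M$-$rad(N)$ must be exactly $\sqrt{(N:M)}$, not merely contained in it, and this is precisely what the displayed identity guarantees through Lemma \ref{9}(1)--(2); without that equality the transfer from ideal to module would only be one-directional. A secondary point to confirm is that the definition of quasi $J$-ideal as ``an ideal whose radical is a $J$-ideal'' genuinely matches the conclusion ``$b\in\sqrt{I}$ whenever $ab\in I$ and $a\notin J(R)$,'' so that the middle equivalence is truly biconditional; this is supplied by the characterization in \cite{Haniece}. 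Once these checks are in place, the surviving implications are purely formal substitutions and need no further computation.
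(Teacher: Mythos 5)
Your chain of equivalences is mathematically sound, but it takes a genuinely different route from the paper. The theorem's list (hidden by the truncated statement you received) is: (1) $N$ is a quasi $J$-submodule, (2) $M$-$rad(N)$ is a quasi $J$-submodule, (3) $M$-$rad(N)$ is a $J$-submodule, (4) $(M$-$rad(N):_{M}\langle r\rangle)=M$-$rad(N)$ for all $r\notin J(R)$. The paper proves these by a single cycle (1)$\Rightarrow$(2)$\Rightarrow$(3)$\Rightarrow$(4)$\Rightarrow$(1) in which the only substantive step is (1)$\Rightarrow$(2): given $rm\in M$-$rad(N)=\sqrt{(N:M)}M$ with $r\notin J(R)$, one computes $r((m):M)\subseteq\sqrt{(N:M)}$, invokes Theorem \ref{1} to get that $(N:M)$ is a quasi $J$-ideal, deduces $((m):M)\subseteq\sqrt{(N:M)}$, and concludes $m\in M$-$rad(N)$; the steps (2)$\Rightarrow$(3), (3)$\Rightarrow$(4) and (4)$\Rightarrow$(1) are then immediate from $M$-$rad(M$-$rad(N))=M$-$rad(N)$ and $(J(R)M:M)=J(R)$. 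You instead run everything through the ideal dictionary: Theorem \ref{1}(2), the definition of a quasi $J$-ideal, and the $J$-level correspondence \cite[Corollary 3.4]{Hani} applied to $M$-$rad(N)$, glued together by the identity $(M$-$rad(N):M)=\sqrt{(N:M)}$ from Lemma \ref{9}. That argument is correct, including your two checks (that $M$-$rad(N)$ is proper, which holds since a proper submodule of a finitely generated module lies in a maximal, hence prime, submodule, and that the residual equality is exact). What it buys is a shorter, computation-free proof of (1)$\Leftrightarrow$(3); what it costs is an extra external dependency on \cite[Corollary 3.4]{Hani}, which the paper's proof of this particular theorem avoids by redoing the element computation at the quasi level, and which also lets the paper dispatch condition (4) explicitly along the way.

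One caveat: your closing claim that the remaining listed conditions are "repackagings" was necessarily a guess, and the examples you name ($N=IM$ for some quasi $J$-ideal $I$, and the reformulation via Proposition \ref{eq1}) in fact belong to Theorem \ref{1}, not to this theorem. The guess happens to be right for the actual list: (2)$\Leftrightarrow$(3) follows from idempotence of $M$-$rad$ (or by running your chain on $M$-$rad(N)$ itself), and (4) is precisely (3) rewritten as a colon identity using $(J(R)M:M)=J(R)$. So nothing essential is missing, but a complete write-up should state these two observations explicitly rather than defer to unnamed conditions.
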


\begin{enumerate}
\item $N$ is a quasi $J$-submodule.

\item $M$-$rad(N)$ is a quasi $J$-submodule.

\item $M$-$rad(N)$ is a $J$-submodule.

\item $(M$-$rad(N):_{M}\left\langle r\right\rangle )=M$-$rad(N)$ for all
$r\notin J(R)$.
\end{enumerate}

\begin{proof}
(1)$\Rightarrow$(2) Suppose $N$ is a quasi $J$-submodule and let $r\in R$ and
$m\in M$ such that $rm\in M$-$rad(N)$ and $r\notin(J(R)M:M)=J(R)$. Then
$rm\in\sqrt{N:M}M$ and so $r((m):M)=((rm):M)\subseteq(\sqrt{N:M}%
M:M)=\sqrt{N:M}$. Since $(N:M)$ is a quasi $J$-ideal by Theorem \ref{1}, then
$((m):M)\subseteq\sqrt{N:M}$. It follows that $m\in((m):M)M\subseteq\sqrt
{N:M}M=M$-$rad(N)$.

(2)$\Rightarrow$(3) It is straightforward as $M$-$rad(M$-$rad(N))=M$-$rad(N).$

(3)$\Rightarrow$(4) Let $m\in(M$-$rad(N):_{M}\left\langle r\right\rangle )$.
Then $rm\in M$-$rad(N)$ with $r\notin(J(R)M:M)$ and so $m\in M$-$rad(N)$ by
our assumption (2). The other inclusion is clear.

(4)$\Rightarrow$(1) Suppose that $rm\in N$ and $r\notin(J(R)M:M).$ Then $rm\in
M$-$rad(N)$ and $r\notin J(R)$ which imply that $m\in(M$-$rad(N):_{M}%
<r>)=M$-$rad(N).$ Thus, $N$ is a quasi $J$-submodule.
\end{proof}

In general the equivalences in Theorem \ref{3} need not be true if $M$ is not
finitely generated faithful multiplication. For example, while $\left\langle
\bar{0}\right\rangle $ is a quasi J-submodule in the $%
%TCIMACRO{\U{2124} }%
%BeginExpansion
\mathbb{Z}
%EndExpansion
$-module $M=%
%TCIMACRO{\U{2124} }%
%BeginExpansion
\mathbb{Z}
%EndExpansion
_{4}$, $M$-$rad(\left\langle \bar{0}\right\rangle )=\left\langle \bar
{2}\right\rangle $ is not a $J$-submodule since for example $2\cdot\bar{1}%
\in\left\langle \bar{2}\right\rangle $ while $2\notin(\bar{0}:$ $%
%TCIMACRO{\U{2124} }%
%BeginExpansion
\mathbb{Z}
%EndExpansion
_{4})$ and $\bar{1}\notin\left\langle \bar{2}\right\rangle $.

In view of Theorem \ref{1} and Theorem \ref{3}, we also have:

\begin{proposition}
\label{7}Let $M$ be a finitely generated faithful multiplication $R$-module.
For any submodule $N$ of $M$, the following statements are equivalent.
\end{proposition}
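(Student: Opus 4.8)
The plan is to prove all the equivalences by transferring every module-theoretic condition to a statement about the residual ideal $(N:M)$ (or its radical) and invoking the results already established for a finitely generated faithful multiplication module. The finitely generated faithful multiplication hypothesis supplies, via Lemma \ref{9}, the dictionary $M$-$rad(N)=\sqrt{N:M}\,M$ together with $(IM:M)=I$ and $(IN:M)=I(N:M)$, and this is precisely what makes the module and ideal pictures interchangeable.

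First I would record the purely ideal-side link: by the very definition of a quasi $J$-ideal, $(N:M)$ is a quasi $J$-ideal of $R$ if and only if $\sqrt{(N:M)}$ is a $J$-ideal of $R$. Next, Theorem \ref{1}(2) already gives that $N$ is a quasi $J$-submodule of $M$ if and only if $(N:M)$ is a quasi $J$-ideal of $R$, so the base module condition and the two radical/ideal conditions are immediately tied together.

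Then I would bring in Theorem \ref{3}, which collapses all of the $M$-$rad(N)$ conditions — namely that $M$-$rad(N)$ is a quasi $J$-submodule, that $M$-$rad(N)$ is a $J$-submodule, and that $(M$-$rad(N):_{M}\langle r\rangle)=M$-$rad(N)$ for every $r\notin J(R)$ — onto the single condition that $N$ itself is a quasi $J$-submodule. To splice in the corresponding ideal conditions I would apply Theorem \ref{1} a second time, now to the submodule $M$-$rad(N)=\sqrt{(N:M)}\,M$: since $(\sqrt{(N:M)}\,M:M)=\sqrt{(N:M)}$ by Lemma \ref{9}(2), the statement that $M$-$rad(N)$ is a $J$-submodule becomes the statement that $\sqrt{(N:M)}$ is a $J$-ideal, which closes the loop back to the ideal side.

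The main obstacle is bookkeeping rather than any genuinely new idea: one must be sure that each residual computation — in particular matching the module-level stability condition $(M$-$rad(N):_{M}\langle r\rangle)=M$-$rad(N)$ with its ideal-level counterpart $(\sqrt{(N:M)}:\langle r\rangle)=\sqrt{(N:M)}$ — is justified by Lemma \ref{9}(1)--(3), and that the faithful multiplication hypothesis is genuinely used wherever an equality $(IM:M)=I$ or $M$-$rad(N)=\sqrt{N:M}\,M$ is invoked. Once the dictionary is applied uniformly, the equivalences follow by chaining Theorem \ref{1}, Theorem \ref{3}, and the definition of quasi $J$-ideal.
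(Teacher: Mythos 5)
Your overall route is the paper's own: Proposition \ref{7} is stated there without a written proof, as an immediate consequence of Theorem \ref{1} and Theorem \ref{3}, and that is exactly the chain you assemble, so the architecture is sound. One step, however, is not justified as you state it: you invoke Theorem \ref{1} to convert ``$M$-$rad(N)$ is a $J$-submodule'' into ``$\sqrt{(N:M)}$ is a $J$-ideal,'' but Theorem \ref{1} concerns only the \emph{quasi} notions; the non-quasi analogue ($K$ is a $J$-submodule if and only if $(K:M)$ is a $J$-ideal) is a result of Khashan and Bani-Ata cited elsewhere in the paper, not something established here. The defect is harmless because the link you actually need, (2)$\Leftrightarrow$(3), is immediate from the definition of a quasi $J$-ideal together with idempotence of the radical: $\sqrt{(N:M)}$ is a quasi $J$-ideal precisely when $\sqrt{\sqrt{(N:M)}}=\sqrt{(N:M)}$ is a $J$-ideal, which is condition (2). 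Alternatively, stay entirely in the quasi setting: apply Theorem \ref{1}(2) to the submodule $M$-$rad(N)$, using Lemma \ref{9} to compute $(M$-$rad(N):M)=(\sqrt{(N:M)}M:M)=\sqrt{(N:M)}$, to get that $M$-$rad(N)$ is a quasi $J$-submodule if and only if $\sqrt{(N:M)}$ is a quasi $J$-ideal, and then Theorem \ref{3} identifies the left-hand side with condition (1). With that correction the four conditions close up exactly as you intend.
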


\begin{enumerate}
\item $N$ is a quasi $J$-submodule of $M$..

\item $\sqrt{(N:M)}$ is a $J$-ideal of $R$.

\item $\sqrt{(N:M)}$ is a quasi $J$-ideal of $R$.

\item $(N:M)$ is a quasi $J$-ideal of $R$.
\end{enumerate}

\begin{proposition}
Let $N$, $K$ and $L$ be submodules of an $R$-module $M$ and $I$ be an ideal of
$R$ with $I\nsubseteq(J(R)M:M)$. Then
\end{proposition}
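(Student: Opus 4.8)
The plan is to route every part of the statement through the ideal-theoretic characterization of Proposition \ref{eq1}, whose hypothesis ``$A \nsubseteq (J(R)M:M)$'' matches exactly the standing assumption $I \nsubseteq (J(R)M:M)$ here. Alongside this I would keep at hand only the elementary facts about the submodule radical that hold in an arbitrary module: the inclusion $N \subseteq M\text{-}rad(N)$, monotonicity of $M\text{-}rad$, its idempotency $M\text{-}rad(M\text{-}rad(N)) = M\text{-}rad(N)$, and the correspondence $m \in M\text{-}rad(N) \Leftrightarrow m+N \in Nil(M/N)$ recorded before Theorem \ref{tp}. Since the ideal $I$ with $I \nsubseteq (J(R)M:M)$ is singled out, I expect the claims to concern the residual $[N:_{M}I] = \{m \in M : Im \subseteq N\}$ and inclusions among $M\text{-}rad(N)$, $M\text{-}rad(K)$, $M\text{-}rad(L)$; whatever the precise list, the proofs should be uniform applications of Proposition \ref{eq1} together with these radical basics.

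The core computation I would carry out first is the residual inclusion. Assuming the relevant submodule (say $N$) is a quasi $J$-submodule, take any $m \in [N:_{M}I]$, so that $Im \subseteq N$. Because $I \nsubseteq (J(R)M:M)$, part (3) of Proposition \ref{eq1} applied to the ideal $I$ and the submodule $\langle m\rangle$ gives $\langle m\rangle \subseteq M\text{-}rad(N)$, hence $m \in M\text{-}rad(N)$; this yields $[N:_{M}I] \subseteq M\text{-}rad(N)$ at once, and combined with $N \subseteq [N:_{M}I]$ it traps the residual between $N$ and its radical. For any part phrased with a product hypothesis $IK \subseteq N$ or $IL \subseteq K$, the same one-line invocation of Proposition \ref{eq1}(3) delivers $K \subseteq M\text{-}rad(N)$ or $L \subseteq M\text{-}rad(K)$, and monotonicity then propagates such inclusions through chains of the three submodules. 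The genuinely routine checks are the properness bookkeeping: for instance $[N:_{M}I] \neq M$ because $[N:_{M}I]=M$ would force $IM \subseteq N$ and, via Proposition \ref{eq1}(3) again, $M \subseteq M\text{-}rad(N)$, contradicting that $N$ is proper; and the degenerate cases where one of $N,K,L$ equals $M$, which I would dispose of separately.

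The main obstacle is that, unlike in the finitely generated faithful multiplication setting of Theorem \ref{3}, the radical operator is \emph{not} well-behaved on quasi $J$-submodules of an arbitrary module: the $\mathbb{Z}_{4}$ example following Theorem \ref{3} already warns that passing from $N$ to $M\text{-}rad(N)$ can destroy the defining property, so there is no ``sandwich closure'' allowing me to conclude that a submodule lying between $N$ and $M\text{-}rad(N)$ is again a quasi $J$-submodule. Consequently I must resist the temptation to argue by replacing $N$ with $M\text{-}rad(N)$ and instead feed every relation $rm \in (\,\cdot\,)$ with $r \notin (J(R)M:M)$ directly back into Proposition \ref{eq1} and the hypothesis $I \nsubseteq (J(R)M:M)$, using only membership in $M\text{-}rad(N)$ as the conclusion rather than membership in $N$ itself. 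Handling the interaction of $M\text{-}rad$ with the residual by $I$ in this disciplined way—so that each inclusion is justified by the quasi $J$-property of the correct submodule and not by an unwarranted radical identity—is where the care of the argument will lie.
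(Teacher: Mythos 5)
The proposition you were shown only in truncated form actually asserts two things: (1) if $K$ and $L$ are quasi $J$-submodules of $M$ with $IK=IL$, then $M$-$rad(K)=M$-$rad(L)$; and (2) if $IN$ is a quasi $J$-submodule of a \emph{finitely generated faithful multiplication} module $M$, then $N$ is a quasi $J$-submodule of $M$. Your handling of product hypotheses covers (1) exactly as the paper does: from $IK=IL$ one gets $IK\subseteq L$ and $IL\subseteq K$, Proposition \ref{eq1}(3) yields $K\subseteq M$-$rad(L)$ and $L\subseteq M$-$rad(K)$, and monotonicity plus idempotency of the radical give the equality. So that half is fine.

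The gap is part (2), which your strategy not only omits but explicitly forecloses. You insist on never passing from a submodule to its radical, on the grounds that in an arbitrary module the radical of a quasi $J$-submodule need not retain the property; that caution is correct in general, but part (2) lives precisely in the finitely generated faithful multiplication setting, where Theorem \ref{3} restores the ``sandwich closure'' you renounce: there, $N$ is a quasi $J$-submodule if and only if $M$-$rad(N)$ is a $J$-submodule. The paper's proof of (2) is exactly this: since $IN\subseteq IN$ and $I\nsubseteq(J(R)M:M)$, Proposition \ref{eq1} gives $N\subseteq M$-$rad(IN)$; combined with $IN\subseteq N$ this forces $M$-$rad(N)=M$-$rad(IN)$, which is a $J$-submodule by Theorem \ref{3} applied to the quasi $J$-submodule $IN$, and then Theorem \ref{3} applied once more, in the reverse direction, shows $N$ itself is a quasi $J$-submodule. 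A direct verification along your lines stalls: from $rm\in N$ with $r\notin J(R)$ you can only conclude $rm\in M$-$rad(IN)$ (no new information, since $N\subseteq M$-$rad(IN)$ already), and you cannot repair this by applying Proposition \ref{eq1} to the ideal $rI$, because $r\notin J(R)$ and $I\nsubseteq J(R)$ do not imply $rI\nsubseteq J(R)$ when $J(R)$ is not prime. Extracting $m\in M$-$rad(N)$ at this point is precisely the assertion that $M$-$rad(IN)$ is a $J$-submodule, i.e., Theorem \ref{3} again; the missing idea is this multiplication-module machinery (Theorem \ref{3}, resting on Lemma \ref{9}) that your self-imposed restriction rules out.
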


\begin{enumerate}
\item If $K$ and $L$ are quasi $J$-submodules of $M$ with $IK=IL$, then
$M$-$rad(K)=M$-$rad(L).$

\item If $IN$ is a quasi $J$-submodule of a finitely generated faithful
multiplication module $M$, then $N$ is a quasi $J$-submodule of $M$.
\end{enumerate}

\begin{proof}
(1) Suppose that $IK=IL$. Then $IK\subseteq L$ and $I\nsubseteq(J(R)M:M)$
imply that $K\subseteq M$-$rad(L)$ by Proposition \ref{eq1} and $M$%
-$rad(K)\subseteq M$-$rad(M$-$rad(L))=M$-$rad(L)$. Similarly, we conclude that
$M$-$rad(L)\subseteq M$-$rad(K)$, so the equality holds.

(2) Let $IN$ be a quasi $J$-submodule of $M$. Since $IN\subseteq$ $IN$ and
$I\nsubseteq(J(R)M:M)$, we conclude that $N\subseteq$ $M$-$rad(IN).$ Hence,
$M$-$rad(N)=$ $M$-$rad(IN)$ which is\ clearly a $J$-submodule of $M$ by
Theorem \ref{3}. It follows again by Theorem \ref{3}, that $N$ is a quasi
$J$-submodule of $M$.
\end{proof}

\begin{lemma}
\cite{Majed}\label{2}Let $I$ be a faithful multiplication ideal of a ring $R$
and $M$ be a faithful multiplication $R$-module. Then

\begin{enumerate}
\item For every submodule $N$ of $IM$, we have $(IM)$-$rad(N)=I(M$%
-$rad(N:_{M}I))$.

\item If $N$ is a submodule of $M$ and $I$ is finitely generated, then
$N=(IN:_{M}I)$.
\end{enumerate}
\end{lemma}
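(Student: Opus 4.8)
The plan is to handle the two parts separately, leaning on the structure theory of multiplication modules, and to record first a few facts I would use throughout. The first is that $IM$ is again a multiplication $R$-module, since a product of a multiplication ideal and a multiplication module is multiplication. The second is that $IM$ is faithful: if $r\,IM=0$ then $(rI)M=0$, so $rI\subseteq Ann(M)=0$ and hence $r\in Ann(I)=0$. The third, which I expect to be the recurring technical input, is that multiplication by a faithful multiplication ideal distributes over (even infinite) intersections of submodules, i.e. $I\bigl(\bigcap_{\alpha}Q_{\alpha}\bigr)=\bigcap_{\alpha}(IQ_{\alpha})$. I would also use the elementary identity $N=I(N:_{M}I)$ for any submodule $N$ of $IM$: the inclusion $I(N:_{M}I)\subseteq N$ is immediate, while since $IM$ is multiplication one has $N=(N:IM)\,IM=I\bigl((N:IM)M\bigr)\subseteq I(N:_{M}I)$, because $(N:IM)M\subseteq(N:_{M}I)$.

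For part (2), the easy inclusion $N\subseteq(IN:_{M}I)$ holds because $m\in N$ forces $Im\subseteq IN$. For the reverse, take $m$ with $Im\subseteq IN$ and consider the submodule $N+Rm$ of $M$; then $I(N+Rm)=IN+Im=IN$. The goal is then to cancel $I$ and conclude $N+Rm=N$, i.e. $m\in N$. This is exactly where the finitely generated hypothesis on $I$ enters: I would invoke the cancellation property of finitely generated faithful multiplication ideals, reducing locally at each maximal ideal $\mathfrak{m}$ to the case where $I_{\mathfrak{m}}$ is principal, generated by a non-zero-divisor, and then cancelling. The main obstacle here is precisely the justification of this cancellation — that the local generator is regular in the relevant sense — which is the structural heart of the statement and fails once finite generation of $I$ is dropped.

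For part (1), I would first replace $N$ by a submodule coming from $M$. Setting $K=(N:_{M}I)$, the identity above gives $N=IK$, so it suffices to prove $(IM)\text{-}rad(IK)=I\bigl(M\text{-}rad(K)\bigr)$. I would expand both radicals as intersections of prime submodules and set up the correspondence $P\mapsto(P:_{M}I)$ between prime submodules of the module $IM$ containing $IK$ and prime submodules of $M$ containing $K$. The key verification is that $(P:_{M}I)$ is prime in $M$: given $rm\in(P:_{M}I)$, i.e. $r(Im)\subseteq P$, primeness of $P$ in $IM$ forces, for the fixed $r$, either $rIM\subseteq P$ (whence $rM\subseteq(P:_{M}I)$) or $Im\subseteq P$ (whence $m\in(P:_{M}I)$); and $(P:_{M}I)\neq M$ since $P\subsetneq IM$. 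By the identity above applied to $P$, one recovers $P=I(P:_{M}I)$, so every prime of $IM$ over $IK$ has the form $IQ$ with $Q$ a prime of $M$ over $K$.

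It then remains to assemble the intersections: $(IM)\text{-}rad(IK)=\bigcap_{P}P=\bigcap_{Q}IQ$, and using the distributivity of $I$ over intersections this equals $I\bigl(\bigcap_{Q}Q\bigr)=I\bigl(M\text{-}rad(K)\bigr)$. I expect the genuine obstacle of part (1) to lie here, in two linked points: checking that the correspondence $Q\mapsto IQ$ lands in, and exhausts, the prime submodules of $IM$ over $IK$ (in particular that $IQ$ is proper and prime in $IM$, which is where the faithful multiplication hypotheses on both $I$ and $M$ are used), and then legitimately pulling $I$ out of a possibly infinite intersection of prime submodules. Both are standard for multiplication modules but require the careful multiplication-module machinery rather than a naive argument, and I would reduce to that machinery (or to the local principal case) precisely at these steps.
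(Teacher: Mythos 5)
A preliminary remark: the paper does not prove this lemma at all --- it is quoted from \cite{Majed} as a known result --- so there is no internal proof to compare your attempt against, and I can only assess it on its own merits. On those merits, both parts have genuine gaps. In part (2), the step you yourself flag as ``the structural heart'' is not a quotable fact but is the whole content of the statement. The classical cancellation theorem for finitely generated faithful multiplication ideals cancels $I$ from equalities of \emph{ideals of $R$}; what you need is to cancel $I$ from the equality $I(N+Rm)=IN$ of \emph{submodules of $M$}, and that module-level cancellation is equivalent to part (2) itself (via the trivial identity $I(IN:_{M}I)=IN$), so ``invoking the cancellation property'' is circular. Your local plan can be completed, but the completion is exactly what is missing, and it is where both hypotheses are consumed: $I_{\mathfrak{m}}=aR_{\mathfrak{m}}$ is principal because finitely generated multiplication ideals are locally principal \cite{Anderson1}; $a$ is regular in $R_{\mathfrak{m}}$ because annihilators of \emph{finitely generated} ideals localize, so $\mathrm{Ann}_{R_{\mathfrak{m}}}(I_{\mathfrak{m}})=(\mathrm{Ann}\,I)_{\mathfrak{m}}=0$ (without finite generation faithfulness does not localize and this fails); and $a$ is regular on $M_{\mathfrak{m}}$ --- which is what cancelling submodules of $M_{\mathfrak{m}}$ actually requires, regularity in $R_{\mathfrak{m}}$ alone being insufficient --- because $M$ is faithful multiplication: if $a(m/1)=0$, write $Rm=BM$, get $taBM=0$ for some $t\notin\mathfrak{m}$, hence $taB\subseteq\mathrm{Ann}(M)=0$, hence $B_{\mathfrak{m}}\subseteq\mathrm{Ann}_{R_{\mathfrak{m}}}(a)=0$ and $m/1=0$. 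Without this argument, part (2) is a plan, not a proof.

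In part (1) there is an intermediate claim that is false as stated, not merely unproved: that for every prime submodule $Q$ of $M$ containing $K$, the submodule $IQ$ is a \emph{proper} (hence potentially prime) submodule of $IM$. Part (1) does not assume $I$ finitely generated, and properness genuinely fails then: take $R=\prod_{i=1}^{\infty}\mathbb{F}_{2}$, $I=\bigoplus_{i=1}^{\infty}\mathbb{F}_{2}$ (a faithful multiplication ideal with $I^{2}=I$), $M=R$, $K=0$, and $Q$ any maximal ideal of $R$ containing $I$; then $IQ\supseteq I^{2}=I=IM$, so $IQ=IM$ while $Q\neq M$. Since prime submodules are proper by definition, your inclusion $\bigcap_{P}P\subseteq\bigcap_{Q}IQ$ cannot be obtained by declaring each $IQ$ to be one of the $P$'s. (The argument is repairable: when $IQ=IM$ that inclusion is trivial, and when $IQ\neq IM$ one can quote El-Bast and Smith \cite{Bast} to the effect that $pX$ is prime in a multiplication module $X$ whenever $p$ is a prime ideal containing $\mathrm{Ann}(X)$ and $pX\neq X$. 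Your blanket distributivity $I(\bigcap_{\alpha}Q_{\alpha})=\bigcap_{\alpha}(IQ_{\alpha})$ has the same character: it is false for general modules --- e.g. $I=2\mathbb{Z}$, $M=\mathbb{Z}/2\mathbb{Z}\oplus\mathbb{Z}/4\mathbb{Z}$, $Q_{1}=\langle(\bar{1},\bar{1})\rangle$, $Q_{2}=0\oplus\mathbb{Z}/4\mathbb{Z}$ --- and holds for submodules of a faithful multiplication module only by routing through the intersection theorem of \cite{Bast}.) The deeper criticism is that the prime-by-prime correspondence is unnecessary: granting, as you do, that $IM$ is a multiplication module, the radical formula $X$-$rad(L)=\sqrt{(L:X)}\,X$ valid in any multiplication module $X$ (\cite{Bast}; Lemma \ref{9}(1) of this paper is its finitely generated faithful case) together with the elementary identity $((N:_{M}I):M)=(N:IM)$ gives immediately
\[
(IM)\text{-}rad(N)=\sqrt{(N:IM)}\,IM=I\left(\sqrt{((N:_{M}I):M)}\,M\right)=I\left(M\text{-}rad(N:_{M}I)\right),
\]
a three-line proof of part (1) that avoids prime submodules, properness issues, and infinite intersections altogether.
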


\begin{proposition}
Let $I$ be a faithful multiplication ideal of a ring $R$ and $M$ be a faithful
multiplication $R$-module. If $N$ is a quasi $J$-submodule of $IM$, then
$(N:_{M}I)$ is a quasi $J$-submodule of $M$. Moreover, the converse is true if
$R$ is quasi-local.
\end{proposition}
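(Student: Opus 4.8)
The plan is to treat $IM$ as an $R$-module in its own right and to transfer the quasi $J$-submodule condition in both directions using the two identities in Lemma \ref{2}, namely $(IM)\text{-}rad(N)=I\bigl(M\text{-}rad(N:_{M}I)\bigr)$ and $N=(IN:_{M}I)$, together with the cancellation property of the faithful multiplication ideal $I$. The first preliminary step is to record that the two relevant residuals agree: $(J(R)(IM):IM)=(J(R)M:M)$. The inclusion $\supseteq$ is immediate, since $rM\subseteq J(R)M$ gives $r(IM)=I(rM)\subseteq I(J(R)M)=J(R)(IM)$; the reverse inclusion $r(IM)\subseteq J(R)(IM)=I(J(R)M)\Rightarrow rM\subseteq J(R)M$ is exactly where the cancellation property of $I$ enters. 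I would also observe that $(N:_{M}I)$ is proper in $M$, since $(N:_{M}I)=M$ would force $IM\subseteq N$ and hence $N=IM$, contradicting that a quasi $J$-submodule is proper.

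For the forward implication I would take $r\in R$ and $m\in M$ with $rm\in(N:_{M}I)$ and $r\notin(J(R)M:M)$. Unwinding the residual gives $r(Im)\subseteq N$, and since $Im$ is a submodule of $IM$ and $r\notin(J(R)M:M)=(J(R)(IM):IM)$, the characterization in Proposition \ref{eq1} applied to the quasi $J$-submodule $N$ of $IM$ yields $Im\subseteq(IM)\text{-}rad(N)$. By Lemma \ref{2}(1) this reads $Im\subseteq I\bigl(M\text{-}rad(N:_{M}I)\bigr)$, so that $m\in\bigl(I\bigl(M\text{-}rad(N:_{M}I)\bigr):_{M}I\bigr)$, and Lemma \ref{2}(2) collapses the right-hand side to $M\text{-}rad(N:_{M}I)$. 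Hence $m\in M\text{-}rad(N:_{M}I)$ and $(N:_{M}I)$ is a quasi $J$-submodule of $M$.

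For the converse I would use the quasi-local hypothesis to simplify $I$: over a quasi-local ring every multiplication module is cyclic, so the faithful multiplication ideal $I$ is principal and faithful, say $I=\left\langle a\right\rangle$ with $a$ a regular element; consequently $IM=aM$ and $(N:_{M}I)=\{m\in M:am\in N\}$. Now given $r\in R$ and $y\in IM$ with $ry\in N$ and $r\notin(J(R)(IM):IM)=(J(R)M:M)$, I would write $y=am'$ for some $m'\in M$ (possible precisely because $y\in aM$). Then $a(rm')=ry\in N$ shows $rm'\in(N:_{M}I)$, and since $(N:_{M}I)$ is a quasi $J$-submodule of $M$, we get $m'\in M\text{-}rad(N:_{M}I)$. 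Finally $y=am'\in a\bigl(M\text{-}rad(N:_{M}I)\bigr)=I\bigl(M\text{-}rad(N:_{M}I)\bigr)=(IM)\text{-}rad(N)$ by Lemma \ref{2}(1), so $N$ is a quasi $J$-submodule of $IM$.

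The main obstacle is the cancellation step underlying the identity $(J(R)(IM):IM)=(J(R)M:M)$ and the reduction $\bigl(IP:_{M}I\bigr)=P$ in the forward direction; these are the places where one genuinely needs the faithfulness and multiplication hypotheses on $I$, and they are furnished by Lemma \ref{2}(2) (for finitely generated faithful multiplication $I$) and become trivial in the quasi-local converse because $a$ is regular, so $aA\subseteq aB$ forces $A\subseteq B$. The representation $y=am'$ of an arbitrary element of $IM$ is precisely what the quasi-local hypothesis buys us via cyclicity of $I$; without it a general $y\in IM$ is only a finite sum $\sum a_{i}m_{i}$ and the single-element argument breaks down, which explains why the converse is asserted only in the quasi-local case.
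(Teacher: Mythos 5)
Your proof is correct and takes essentially the same route as the paper's: the forward direction establishes properness, uses the cancellation property of $I$ (furnished by Lemma \ref{2}(2)) to pass from $r\notin(J(R)M:M)$ to $r\notin(J(R)(IM):IM)$, and then applies Lemma \ref{2}(1) and \ref{2}(2) exactly as the paper does, while the converse uses the same quasi-local reduction to a principal ideal $I=\left\langle a\right\rangle$ followed by Lemma \ref{2}(1). The only blemish is your closing remark that regularity of $a$ in $R$ yields cancellation $aA\subseteq aB\Rightarrow A\subseteq B$ for submodules of $M$ (false in general, since a regular element of $R$ may still be a zero-divisor on $M$), but this claim is never actually used: the converse only requires the easy inclusion $(J(R)M:M)\subseteq(J(R)(IM):IM)$, so nothing is lost.
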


\begin{proof}
Suppose $N$ is a quasi $J$-submodule of $IM$. Then $N\varsubsetneq IM$ and so
clearly,$(N:_{M}I)\varsubsetneq M$. Let $r\in R$ and $m\in M$ such that
$rm\in(N:_{M}I)$ and $r\notin(J(R)M:M)$. Then $rmI\subseteq N$ and by Lemma
\ref{2} $r\notin(J(R)IM:IM)$, hence $mI\subseteq(IM)$-$rad(N)=I(M$%
-$rad(N:_{M}I))$. It follows by Lemma \ref{2} that $m\in(I(M$-$rad(N:_{M}%
I)):I)=M$-$rad(N:_{M}I)$. Therefore, $(N:_{M}I)$ is a quasi $J$-submodule of
$M$. Now, suppose $R$ is quasi-local and $(N:_{M}I)$ is a quasi $J$-submodule
of $M$. Then clearly, $N$ is proper in $M$ and $I=\left\langle a\right\rangle
$ is principal, see \cite{Anderson1}. Let $r\in R$ and $m\in IM$ such that
$rm\in N$ and $r\notin(J(R)IM:IM)$. Since $I=\left\langle a\right\rangle $,
then we may write $m=am_{1}$ for some $m_{1}\in M$. Hence, $rm_{1}\in
(N:_{M}I)$ and clearly $r\notin(J(R)M:M)$. So, $m_{1}\in M$-$rad((N:_{M}I))$
as $(N:_{M}I)$ is a quasi $J$-submodule of $M$. Again by Lemma \ref{2}, we
have $m=am_{1}\in I(M$-$rad(N:_{M}I))=(IM)$-$rad(N)$ and the result follows.
\end{proof}

A submodule $N$ of an $R$-module $M$ is said to be small (or superfluous) in
$M$, abbreviated $N\ll M$, in case for any submodule $K$ of $M$, $N+K=M$
implies $K=M.$

\begin{proposition}
Every quasi $J$-submodules of a finitely generated faithful multiplication
$R$-module is small.
\end{proposition}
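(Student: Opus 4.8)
The plan is to show that any quasi $J$-submodule $N$ of $M$ is in fact contained in $J(R)M$, after which smallness is a one-line consequence of Nakayama's Lemma. The reduction to $J(R)M$ is where the finitely generated faithful multiplication hypotheses do their work.

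First, since $M$ is finitely generated faithful multiplication, Theorem \ref{1}(2) tells us that $N$ being a quasi $J$-submodule is equivalent to $(N:M)$ being a quasi $J$-ideal of $R$; by definition this means $\sqrt{(N:M)}$ is a $J$-ideal. I would then record the elementary fact that every proper $J$-ideal $P$ of $R$ satisfies $P\subseteq J(R)$: given $a\in P$, from $a=a\cdot 1\in P$ the defining property of a $J$-ideal forces $a\in J(R)$ or $1\in P$, and properness of $P$ excludes the latter. Applying this to $P=\sqrt{(N:M)}$ gives $(N:M)\subseteq\sqrt{(N:M)}\subseteq J(R)$, whence, using that $M$ is a multiplication module, $N=(N:M)M\subseteq J(R)M$.

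For the smallness itself, suppose $K$ is a submodule of $M$ with $N+K=M$. Then $M=K+N\subseteq K+J(R)M\subseteq M$, so $M=K+J(R)M$. Passing to the quotient gives $M/K=(K+J(R)M)/K=J(R)(M/K)$. Since $M$ is finitely generated, so is the quotient $M/K$, and Nakayama's Lemma yields $M/K=0$, i.e. $K=M$. Hence $N\ll M$.

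The only genuine content lies in the containment $N\subseteq J(R)M$; the finitely generated faithful multiplication hypotheses are exactly what convert the quasi $J$-submodule condition into the ideal statement $(N:M)\subseteq J(R)$ through Theorem \ref{1}(2), and once $N\subseteq J(R)M$ is established the Nakayama step is routine. I expect the main (and rather minor) obstacle to be justifying that $J$-ideals lie in $J(R)$, but as indicated above this is immediate from the definition, so the argument is short.
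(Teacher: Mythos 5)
Your proof is correct, but it concludes differently from the paper. The paper also begins by invoking Theorem \ref{1}(2) to see that $(N:M)$ is a quasi $J$-ideal, but then it stays entirely inside the ideal calculus of multiplication modules: from $N+K=M$ it deduces $(N:M)+(K:M)=(N+K:M)=R$ (this equality uses the finitely generated faithful multiplication hypotheses via Lemma \ref{9}), and then cites a result from \cite{Haniece} to the effect that an ideal comaximal with a quasi $J$-ideal must be all of $R$, giving $(K:M)=R$ and hence $K=(K:M)M=M$. You instead extract the underlying mechanism explicitly: $J$-ideals lie in $J(R)$ (your $a=a\cdot 1$ argument is exactly right, given that $J$-ideals are proper by definition), so $(N:M)\subseteq\sqrt{(N:M)}\subseteq J(R)$ and $N=(N:M)M\subseteq J(R)M$, after which Nakayama applied to $M/K$ finishes. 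The two proofs rest on the same facts --- Theorem \ref{1}(2) plus ``quasi $J$-ideals sit inside $J(R)$'' --- but your route is more self-contained (it does not lean on the external proposition from \cite{Haniece}, which is a submitted, unpublished reference) and it isolates a cleaner division of labor: the multiplication hypotheses are used only to establish $N\subseteq J(R)M$, and smallness then follows from the completely general fact that a submodule of $J(R)M$ in a finitely generated module is superfluous. Incidentally, the containment $N\subseteq J(R)M$ you prove by hand is also asserted in the paper's proof of Theorem \ref{J(I)}, where it is again justified only by citation to \cite{Haniece}, so your argument fills in that step as well.
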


\begin{proof}
Let $N$ be a quasi $J$-submodule of an $R$-module $M$ and $K$ be a submodule
of $M$ with $N+K=M$. Then clearly $(N:M)+(K:M)=(N+K:M)=R$ and $(N:M)$ is a
quasi $J$-ideal of $R$ by Theorem \ref{1}. Hence $(K:M)=R$ by
\cite[Proposition 4]{Haniece} and so $K=M$ as desired.
\end{proof}

Let $M$ be an $R$-module and $N$ be a submodule of $M$. We denote the
intersection of all maximal submodules of $M$ by $J(M)$. In particular, by
$J(N)$, we denote the intersection of all maximal submodules of $M$ containing
$N$. It is well known that If $M$ is finitely generated faithful
multiplication, then $J(M)=J(R)M$, \cite{Bast}. In particular, we have
$J(N)=J(N:M)M$.

In the next two theorems, we obtain more characterizations for quasi
$J$-submodules in finitely generated faithful multiplication modules.

\begin{theorem}
\label{J(I)}Let $N$ be a $J$-submodule of a finitely generated faithful
multiplication $R$-module $M.$ Then the following statements are equivalent:
\end{theorem}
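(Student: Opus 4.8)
The plan is to reduce every clause of the equivalence to a statement about the residual ideal $(N:M)$ and the radical $J(R)$, exploiting the dictionary that a finitely generated faithful multiplication $M$ supplies through Lemma \ref{9}: namely $M\text{-}rad(N)=\sqrt{N:M}\,M$, $(IM:M)=I$ for every ideal $I$, and $(IN:M)=I(N:M)$. Under this correspondence the submodule lattice of $M$ is order-isomorphic to the ideal lattice of $R$ via $N\mapsto(N:M)$ with inverse $I\mapsto IM$, and the standing hypothesis that $N$ is a $J$-submodule translates (by the $J$-ideal form of the argument in Theorem \ref{1}(2)) into $(N:M)$ being a $J$-ideal of $R$. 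My intention is to prove the listed conditions as one closed cycle of implications, at each arrow exchanging a module-level statement for its ideal counterpart, invoking the appropriate property of $J$-ideals from \cite{Hani}, and lifting back through $I\mapsto IM$.

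First I would record the two structural facts that drive everything. Every $J$-ideal $I$ satisfies $I\subseteq J(R)$: if $a\in I$ with $a\notin J(R)$, then $a=a\cdot 1\in I$ forces $1\in I$, contradicting properness. Hence $(N:M)\subseteq J(R)$, so $N=(N:M)M\subseteq J(R)M=J(M)$. Consequently every maximal ideal of $R$ contains $(N:M)$, giving $J((N:M))=J(R)$, and the identity $J(N)=J(N:M)M$ recalled before the theorem yields $J(N)=J(R)M=J(M)$. These equalities are precisely what let me pass between the module-level conditions (phrased via $J(N)$, $M\text{-}rad(N)$, or maximal submodules of $M$) and the ring-level conditions (phrased via $J((N:M))$, $\sqrt{N:M}$, or maximal ideals containing $(N:M)$).

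For the clauses expressed through radicals or intersections of maximal submodules, I would use the one-to-one correspondence between prime (resp.\ maximal) submodules of $M$ and prime (resp.\ maximal) ideals of $R$ containing $(N:M)$, together with the translation $m\in M\text{-}rad(N)\iff(\langle m\rangle:M)\subseteq\sqrt{N:M}$ and $J(N)=J(N:M)M$. This reduces each such clause to an elementary statement about $(N:M)$ inside $R$, where the cited $J$-ideal results of \cite{Hani} and the quasi $J$-ideal results of \cite{Haniece} apply directly, after which Lemma \ref{9} carries the conclusion back to $M$.

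The main obstacle I anticipate is the step that must lift a ring-theoretic extremal condition on $(N:M)$ to a module-theoretic one on $N$: showing that an equality such as $J(N)=N$, or a maximality property, is preserved under $I\mapsto IM$ uses faithful multiplication essentially, because the injectivity needed to deduce $N=IM$ from $(N:M)=I$ rests on Lemma \ref{9}(2), which fails without the finitely generated faithful multiplication hypothesis. I would therefore isolate and verify that identity carefully, and separately confirm that $J(R)$ itself qualifies as a $J$-ideal so that the extremal clause genuinely pins $(N:M)$ down to $J(R)$, and hence $N$ down to $J(R)M=J(N)$.
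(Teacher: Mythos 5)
Your proposal never engages with what Theorem \ref{J(I)} actually asserts. The two clauses being proven equivalent are: (1) $N$ is a quasi $J$-submodule of $M$; and (2) $N\subseteq J(M)$, and whenever $r\in R$, $m\in M$ with $rm\in N$ and $r\notin(J(N):M)$, then $m\in M$-$rad(N)$ --- that is, clause (2) replaces the ideal $(J(R)M:M)$ in the definition of quasi $J$-submodule by the a priori larger ideal $(J(N):M)$, at the cost of adding the containment $N\subseteq J(M)$. Your text instead speculates about clauses of the form ``$J(N)=N$'' or ``a maximality property,'' which do not occur in the theorem, and the argument stays at the level of a plan (``I would use\dots'', ``This reduces each such clause\dots''); neither implication of the real equivalence is ever verified. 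Worse, the auxiliary claim you propose to ``separately confirm'' --- that $J(R)$ qualifies as a $J$-ideal --- is false in general: in $R=\mathbb{Z}_6$ one has $J(R)=0$ and $2\cdot 3=0$ with $2\notin J(R)$ but $3\notin J(R)$. The paper itself treats this as a genuine hypothesis, not a fact: Proposition \ref{av} explicitly assumes $J(R)$ is a quasi $J$-ideal, and Corollary \ref{J} shows that $J(M)$ being a (quasi) $J$-submodule is equivalent to $J(M)$ being prime, hence not automatic.

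That said, the structural identities you record are correct and are exactly the engine of the paper's proof: from $(N:M)\subseteq J(R)$ one gets $N=(N:M)M\subseteq J(R)M=J(M)$, and from $J(N)=J(N:M)M$ together with $J\bigl((N:M)\bigr)=J(R)$ one gets $J(N)=J(M)$, whence $(J(N):M)=(J(R)M:M)=J(R)$ by Lemma \ref{9}(2). Applied to the actual clauses, these facts finish the theorem at once: in (1)$\Rightarrow$(2), the paper derives $(N:M)\subseteq J(R)$ from clause (1) via Theorem \ref{1} and \cite[Theorem 2]{Haniece} to get $N\subseteq J(M)$, and notes $J(M)\subseteq J(N)$ so that $r\notin(J(N):M)$ forces $r\notin(J(R)M:M)$; in (2)$\Rightarrow$(1), the containment $N\subseteq J(M)$ assumed in clause (2) forces $J(N)=J(M)$, so the two ``whenever'' conditions coincide. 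So the gap is not in your machinery --- it is that the proof was never carried out on the statement at hand, and one of the props you intended to lean on does not hold.
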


\begin{enumerate}
\item $N$ is a quasi $J$-submodule of $M.$

\item $N\subseteq J(M)$ and if whenever $r\in R$ and $m\in M$ with $rm\in N$
and $r\notin(J(N):M)$, then $m\in M$-$rad(N)$.
\end{enumerate}

\begin{proof}
(1)$\Rightarrow$(2) Suppose $N$ is a quasi $J$-submodule. since $(N:M)$ is a
quasi $J$-ideal by Theorem \ref{1}, then $(N:M)\subseteq J(R)$, \cite[Theorem
2]{Haniece}. Thus, $N=(N:M)M\subseteq J(R)M=J(M)$. Moreover, let $r\in R$ and
$m\in M$ with $rm\in N$ and $r\notin(J(N):M)$. Then $r\notin
(J(M):M)=(J(R)M:M)$ as clearly $J(M)\subseteq J(N)$ and so $m\in M$-$rad(N)$
by assumption.

(2)$\Rightarrow$(1) If $N=M$, then $J(M)=M$, a contradiction. Let $r\in R$ and
$m\in M$ with $rm\in N$ and $r\notin(J(R)M:M)=J(R)$. Since $N\subseteq J(M)$,
then one can easily see that $J(N)\subseteq J(J(M))=J(M)$ and so $J(M)=J(N)$.
Thus, $r\notin(J(N):M)$ and so $m\in M$-$rad(N)$ as required.
\end{proof}

Recall that If $M$ is a multiplication $R$-module and $N=IM$, $K=JM$ are two
submodules of $M$, then the product $NK$ of $N$ and $K$ is defined as
$NK=(IM)(JM)=(IJ)M$. In particular, if $m_{1},m_{2}\in M$, then $m_{1}%
m_{2}=\left\langle m_{1}\right\rangle \left\langle m_{2}\right\rangle $.

\begin{proposition}
Let $M$ be a finitely generated faithful multiplication $R$-module and $N$ a
proper submodule of $M$. Then $N$ is a quasi $J$-submodule of $M$ if and only
if whenever $K$ and $L$ are submodules of $M$ with $KL\subseteq N$, then
$K\subseteq J(M)$ or $L\subseteq M$-$rad(N).$
\end{proposition}

\begin{proof}
Suppose $K=IM$ and $L=JM$ for some ideals $I$ and $J$ of $R$ and $KL\subseteq
N$. Then $I(JM)\subseteq N$ and so $I\subseteq(J(R)M:M)=J(R)$ or
$L=JM\subseteq M$-$rad(N)$ by Proposition \ref{eq1}. Thus, $K\subseteq
J(R)M=J(M)$ or $L\subseteq M$-$rad(N)$. Conversely, let $A\nsubseteq
(J(R)M:M)=J(R)$ be an ideal of $R$ and $L$ be a submodule of $M$ with
$AL\subseteq N.$ Then the result follows by putting $K=AM$ and using again
Proposition \ref{eq1}.
\end{proof}

\begin{corollary}
Let $N$ be a proper submodule of a finitely generated faithful multiplication
$R$-module $M$. Then $N$ is a quasi $J$-submodule of $M$ if and only if
whenever $m_{1},m_{2}\in M$ such that $m_{1}m_{2}\in N$, then $m_{1}\in J(M)$
or $m_{2}\in M$-$rad(N)$.
\end{corollary}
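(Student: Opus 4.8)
The plan is to deduce this Corollary directly from the immediately preceding Proposition, of which it is the principal-submodule specialization. Recall that in a multiplication module the product of two elements is defined by $m_{1}m_{2}=\left\langle m_{1}\right\rangle \left\langle m_{2}\right\rangle$, so the hypothesis $m_{1}m_{2}\in N$ is to be read as $\left\langle m_{1}\right\rangle \left\langle m_{2}\right\rangle \subseteq N$, while the conclusions $m_{1}\in J(M)$ and $m_{2}\in M\text{-}rad(N)$ mean $\left\langle m_{1}\right\rangle \subseteq J(M)$ and $\left\langle m_{2}\right\rangle \subseteq M\text{-}rad(N)$, respectively. Since $M$ is finitely generated faithful multiplication, I may freely use $J(M)=J(R)M$ and the characterization of quasi $J$-submodules supplied by the preceding Proposition.

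For the forward implication, I would assume $N$ is a quasi $J$-submodule and take $m_{1},m_{2}\in M$ with $m_{1}m_{2}\in N$. Putting $K=\left\langle m_{1}\right\rangle$ and $L=\left\langle m_{2}\right\rangle$, this is precisely $KL\subseteq N$, so the preceding Proposition yields $\left\langle m_{1}\right\rangle \subseteq J(M)$ or $\left\langle m_{2}\right\rangle \subseteq M\text{-}rad(N)$; that is, $m_{1}\in J(M)$ or $m_{2}\in M\text{-}rad(N)$, as desired.

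For the converse, I would verify the containment hypothesis of the Proposition. Suppose $K$ and $L$ are submodules of $M$ with $KL\subseteq N$ and $K\nsubseteq J(M)$; the goal is to show $L\subseteq M\text{-}rad(N)$. I would choose a witness $m_{1}\in K$ with $m_{1}\notin J(M)$, and for an arbitrary $m_{2}\in L$ note that $\left\langle m_{1}\right\rangle \left\langle m_{2}\right\rangle \subseteq KL\subseteq N$, i.e. $m_{1}m_{2}\in N$. The elementwise hypothesis then forces $m_{1}\in J(M)$ or $m_{2}\in M\text{-}rad(N)$, and since $m_{1}\notin J(M)$ we conclude $m_{2}\in M\text{-}rad(N)$. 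As $m_{2}\in L$ was arbitrary, $L\subseteq M\text{-}rad(N)$, so the hypothesis of the preceding Proposition holds and $N$ is a quasi $J$-submodule of $M$.

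There is no genuine obstacle here; the only step requiring a moment's care is the translation between elements and principal submodules, namely recognising that $m_{1}m_{2}\in N$ amounts to $\left\langle m_{1}\right\rangle \left\langle m_{2}\right\rangle \subseteq N$ and that $\left\langle m_{1}\right\rangle \left\langle m_{2}\right\rangle \subseteq KL$ whenever $m_{1}\in K$ and $m_{2}\in L$. Once the elementwise statement is rendered in this submodule language, each direction is an immediate specialization of (respectively, reduction to) the preceding Proposition.
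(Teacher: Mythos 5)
Your proof is correct and follows exactly the route the paper intends: the corollary is stated without proof as an immediate specialization of the preceding proposition, obtained by taking $K=\left\langle m_{1}\right\rangle$, $L=\left\langle m_{2}\right\rangle$ in one direction and, in the other, reducing arbitrary $K,L$ with $KL\subseteq N$ to elements via the monotonicity of the submodule product. Your write-up simply makes explicit the translation $m_{1}m_{2}=\left\langle m_{1}\right\rangle\left\langle m_{2}\right\rangle$ that the paper leaves implicit.
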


\begin{proposition}
\label{(N:S)}Let $M$ be a finitely generated faithful multiplication
$R$-module and let $S$ be a subset of $R$ with $S\nsubseteq J(R)$. If $N$ is a
quasi $J$-submodule of $M$, then $(N:_{M}S)$ is a quasi $J$-submodule of $M.$
\end{proposition}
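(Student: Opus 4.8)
The plan is to sandwich $(N:_{M}S)$ between $N$ and $M$-$rad(N)$ and then read off the conclusion from Theorem \ref{3}. Throughout I would use that $M$ is finitely generated faithful multiplication, so $(J(R)M:M)=J(R)$ by Lemma \ref{9}(2), and that $(N:M)\subseteq J(R)$: indeed $(N:M)$ is a quasi $J$-ideal by Theorem \ref{1}, and every quasi $J$-ideal is contained in $J(R)$ by \cite[Theorem 2]{Haniece}. I would first dispose of properness: if $(N:_{M}S)=M$, then $sM\subseteq N$ for every $s\in S$, i.e. $S\subseteq(N:M)\subseteq J(R)$, contradicting $S\nsubseteq J(R)$; hence $(N:_{M}S)$ is a proper submodule of $M$.

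The heart of the argument is the chain $N\subseteq(N:_{M}S)\subseteq M$-$rad(N)$. The lower containment is immediate since $N$ is a submodule. For the upper one, I would use $S\nsubseteq J(R)$ to fix some $s_{0}\in S$ with $s_{0}\notin J(R)=(J(R)M:M)$; then for any $m\in(N:_{M}S)$ we have $s_{0}m\in N$, and since $N$ is a quasi $J$-submodule and $s_{0}\notin(J(R)M:M)$, we obtain $m\in M$-$rad(N)$. Thus $(N:_{M}S)\subseteq M$-$rad(N)$. Applying the monotone, idempotent operator $M$-$rad(-)$ to this chain (using $M$-$rad(M$-$rad(N))=M$-$rad(N)$, as already noted in the proof of Theorem \ref{3}) forces $M$-$rad((N:_{M}S))=M$-$rad(N)$.

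To finish, I would invoke Theorem \ref{3} twice. Since $N$ is a quasi $J$-submodule, that theorem gives that $M$-$rad(N)$ is a $J$-submodule of $M$. By the radical equality just established, $M$-$rad((N:_{M}S))=M$-$rad(N)$ is therefore a $J$-submodule; as $(N:_{M}S)$ is proper, the reverse implication in Theorem \ref{3} yields that $(N:_{M}S)$ is a quasi $J$-submodule of $M$, as desired.

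The only delicate point—the step where care is needed—is the upper containment $(N:_{M}S)\subseteq M$-$rad(N)$: it rests on selecting a single witness $s_{0}\in S\setminus J(R)$ and applying the quasi $J$ property of $N$ with the correct residual identification $(J(R)M:M)=J(R)$. Once the sandwich $N\subseteq(N:_{M}S)\subseteq M$-$rad(N)$ is in place, the equality of radicals and the two appeals to Theorem \ref{3} are routine.
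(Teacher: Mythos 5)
Your proof is correct, and it reaches the conclusion by a genuinely different decomposition than the paper's. The paper verifies the definition of quasi $J$-submodule for $(N:_{M}S)$ directly: given $rm\in(N:_{M}S)$ with $r\notin J(R)$, it observes $rSm\subseteq N$, applies the quasi $J$-property of $N$ to each element $rsm$ to get $Sm\subseteq M$-$rad(N)$, and then concludes $m\in(M$-$rad(N):_{M}S)=M$-$rad(N)\subseteq M$-$rad((N:_{M}S))$; the residual identity $(M$-$rad(N):_{M}S)=M$-$rad(N)$ invoked there is established exactly by your ``delicate point'' (a witness $s_{0}\in S\setminus J(R)$ together with the fact, via Theorem \ref{3}, that $M$-$rad(N)$ is a $J$-submodule). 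You, by contrast, never quantify over an arbitrary pair $(r,m)$: you prove the sandwich $N\subseteq(N:_{M}S)\subseteq M$-$rad(N)$, force $M$-$rad((N:_{M}S))=M$-$rad(N)$ by monotonicity and idempotence of the radical, and then let the equivalence (1)$\Leftrightarrow$(3) of Theorem \ref{3} --- applied once to $N$ and once to the proper submodule $(N:_{M}S)$ --- do all the element-wise work. Your route buys brevity and the explicit structural byproduct that $(N:_{M}S)$ and $N$ have the same $M$-radical; the paper's route buys independence from the reverse implication (3)$\Rightarrow$(1) of Theorem \ref{3}, since it only ever needs the forward direction. The properness arguments also differ: the paper applies the quasi $J$-property of $N$ to deduce $M$-$rad(N)=M$ and derive a contradiction (stated somewhat loosely as $M=M$-$rad(N)=N$), whereas you route through $S\subseteq(N:M)\subseteq J(R)$ using Theorem \ref{1} and \cite[Theorem 2]{Haniece}; both are valid, and yours sidesteps that loose step entirely.
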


\begin{proof}
First, we prove that $(N:_{M}S)$ is proper in $M$. Suppose $(N:_{M}S)=M$ and
let $m\in M$. Then $Sm\subseteq N$ and since $N$ is a quasi $J$-submodule, we
get $m\in M$-$rad(N)$. Thus, $M=M$-$rad(N)=N$, a contradiction. Now, similar
to the proof of (4) in Theorem \ref{3}, one can prove that $(M-rad(N):_{M}%
S)=M-rad(N)$. Suppose that $rm\in(N:_{M}S)$ and $r\notin J(R).$ Then
$rSm\subseteq N$ and so $Sm\subseteq M-rad(N)$ as $N$ is a quasi
$J$-submodule. It follows that $m\in(M-rad(N):_{M}S)=M-rad(N)\subseteq$
$M-rad(N:_{M}S)$ as required.
\end{proof}

A proper submodule $N$ of an $R$-module $M$ is called a maximal quasi
$J$-submodule if there is no quasi $J$-submodule which contains $N$ properly.

\begin{theorem}
\label{max}Every quasi $J$-submodule of an $R$-module $M$ is contained in a
maximal quasi $J$-submodule of $M$. Moreover, if $M$ is finitely generated
faithful multiplication, then a maximal quasi $J$-submodule of $M$ is a $J$-submodule.
\end{theorem}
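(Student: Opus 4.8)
The plan is to prove the two assertions separately: the first by a Zorn's lemma argument, and the second as a direct consequence of Theorem \ref{3}.

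For the first sentence, fix a quasi $J$-submodule $N_{0}$ and let $\Sigma$ be the set of all quasi $J$-submodules of $M$ containing $N_{0}$, partially ordered by inclusion; since $N_{0}\in\Sigma$, the poset is nonempty. To apply Zorn's lemma I would take an arbitrary chain $\{L_{\alpha}\}$ in $\Sigma$ and argue that $L=\bigcup_{\alpha}L_{\alpha}$ again lies in $\Sigma$. That $L$ is a submodule containing $N_{0}$ is immediate from the chain condition. For the quasi $J$ property, suppose $rm\in L$ with $r\notin(J(R)M:M)$; then $rm\in L_{\alpha}$ for some $\alpha$, so $m\in M$-$rad(L_{\alpha})$, and since $L_{\alpha}\subseteq L$ forces $M$-$rad(L_{\alpha})\subseteq M$-$rad(L)$ (every prime submodule containing $L$ contains $L_{\alpha}$), we obtain $m\in M$-$rad(L)$. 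Thus the defining implication transfers to the union, and Zorn's lemma then yields a maximal element of $\Sigma$, which is the desired maximal quasi $J$-submodule containing $N_{0}$.

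The hard part will be verifying that the union $L$ is \emph{proper}, i.e. genuinely a quasi $J$-submodule rather than all of $M$; this is the only place where the union of a chain of proper submodules can break down, and I expect it to be the main technical obstacle. The clean way I would secure it is to exhibit one fixed proper submodule of $M$ containing every member of $\Sigma$. In the finitely generated faithful multiplication setting this is automatic: by Theorem \ref{1} a quasi $J$-submodule $L$ has $(L:M)$ a quasi $J$-ideal, whence $(L:M)\subseteq J(R)$ by \cite[Theorem 2]{Haniece}, so $L=(L:M)M\subseteq J(R)M$ with $J(R)M\subsetneq M$; consequently the union stays inside $J(R)M$ and properness is free. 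For a general module one must instead argue properness directly, and this is the step that requires the most care.

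For the second sentence, assume $M$ is finitely generated faithful multiplication and let $N$ be a maximal quasi $J$-submodule. First I would note, via the equivalence $(1)\Leftrightarrow(2)$ of Theorem \ref{3}, that $M$-$rad(N)$ is again a quasi $J$-submodule of $M$, and that it is proper because $M$-$rad(N)=\sqrt{(N:M)}M$ by Lemma \ref{9} equals $M$ only when $N=M$. Since $N\subseteq M$-$rad(N)$ and $N$ is maximal among quasi $J$-submodules, maximality forces $N=M$-$rad(N)$. Finally, the equivalence $(1)\Leftrightarrow(3)$ of Theorem \ref{3} shows that $M$-$rad(N)$ is a $J$-submodule, so $N=M$-$rad(N)$ is a $J$-submodule, which completes the proof.
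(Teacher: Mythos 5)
Your strategy coincides with the paper's on the first assertion and diverges on the second; both halves merit comment.

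For the first sentence, the paper runs exactly the Zorn's lemma argument you outline, and it disposes of the point you isolate --- properness of the union of a chain --- with the words ``since also $\bigcup_{i=1}^{\infty}N_{i}$ is clearly proper.'' Your instinct that this is the real obstacle is correct, and in fact it cannot be overcome for an arbitrary module: take $R=\mathbb{Z}$ and $M=\mathbb{Z}_{p^{\infty}}$, the Pr\"{u}fer $p$-group. Here $J(R)=0$, so $(J(R)M:M)=(0:M)=0$, and $M$ has no prime submodules at all: $M$ is divisible, so $(P:M)=0$ for every proper submodule $P=\langle\overline{1/p^{n}}\rangle$, while $p\cdot\overline{1/p^{n+1}}\in P$ with $\overline{1/p^{n+1}}\notin P$, so $P$ is not prime. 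Hence $M$-$rad(N)=M$ for every proper $N$ (an empty intersection of submodules is $M$), so every proper submodule of $\mathbb{Z}_{p^{\infty}}$ is vacuously a quasi $J$-submodule. These form the strictly ascending chain $0\subset\langle\overline{1/p}\rangle\subset\langle\overline{1/p^{2}}\rangle\subset\cdots$ whose union is $M$, and $M$ has no maximal quasi $J$-submodule, so the first assertion of the theorem actually fails for this module. Thus the gap you flag is a gap in the paper's own proof (indeed in the statement), not a defect peculiar to your proposal; and your remedy --- in the finitely generated faithful multiplication case every quasi $J$-submodule $L$ satisfies $L=(L:M)M\subseteq J(R)M\subsetneq M$, so properness of the union is free --- is exactly the right way to make the result sound in the setting where the second half applies.

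For the second sentence your route is genuinely different from the paper's. You apply Theorem \ref{3}: $M$-$rad(N)$ is a proper quasi $J$-submodule containing the maximal quasi $J$-submodule $N$, so maximality forces $N=M$-$rad(N)$, and then $(1)\Leftrightarrow(3)$ of Theorem \ref{3} makes $N$ a $J$-submodule. The paper instead argues element-wise: given $rm\in K$ with $r\notin J(R)$, Proposition \ref{(N:S)} shows that $(K:_{M}r)$ is a quasi $J$-submodule containing $K$, so maximality gives $(K:_{M}r)=K$ and hence $m\in K$. Both arguments are correct and of comparable length; yours yields the extra information that a maximal quasi $J$-submodule equals its own prime radical, while the paper's avoids invoking Theorem \ref{3} and uses only the colon construction.
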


\begin{proof}
Suppose that $N$ is a quasi $J$-submodule of $M$ and Set $\Omega=\{N_{\alpha
}:N_{\alpha}$ is a quasi $J$-submodule of $M$, $\alpha\in\Lambda\}$. Then
$\Omega\neq\emptyset.$ Let $N_{1}\subseteq N_{2}\subseteq\cdots$ be any chain
in $\Omega$. We show that $%
%TCIMACRO{\dbigcup \limits_{i=1}^{\infty}}%
%BeginExpansion
{\displaystyle\bigcup\limits_{i=1}^{\infty}}
%EndExpansion
N_{i}$ is a quasi $J$-submodule of $M$. Suppose $rm\in%
%TCIMACRO{\dbigcup \limits_{i=1}^{\infty}}%
%BeginExpansion
{\displaystyle\bigcup\limits_{i=1}^{\infty}}
%EndExpansion
N_{i}$ for $r\in R$, $m\in M$ and $r\notin(J(R)M:M)$. Then $rm\in N_{j}$ for
some $j\in%
%TCIMACRO{\U{2115} }%
%BeginExpansion
\mathbb{N}
%EndExpansion
$ which implies that $m\in M$-$rad(N_{j})\subseteq M-rad(%
%TCIMACRO{\dbigcup \limits_{i=1}^{\infty}}%
%BeginExpansion
{\displaystyle\bigcup\limits_{i=1}^{\infty}}
%EndExpansion
N_{i})$. Since also $%
%TCIMACRO{\dbigcup \limits_{i=1}^{\infty}}%
%BeginExpansion
{\displaystyle\bigcup\limits_{i=1}^{\infty}}
%EndExpansion
N_{i}$ is clearly proper, then $%
%TCIMACRO{\dbigcup \limits_{i=1}^{\infty}}%
%BeginExpansion
{\displaystyle\bigcup\limits_{i=1}^{\infty}}
%EndExpansion
N_{i}$ is a quasi $J$-submodule which is an upper bound of the chain
$\{N_{i}:i\in%
%TCIMACRO{\U{2115} }%
%BeginExpansion
\mathbb{N}
%EndExpansion
\}$. By Zorn's Lemma, $\Omega$ has a maximal element which is a maximal quasi
$J$-submodule of $M$. Now, let $K$ be a maximal quasi $J$-submodule of $M$.
Suppose that $rm\in K$ and $r\notin J(R)$. Then $(K:_{M}r)$ is also a quasi
$J$-submodule of $M$ by Proposition \ref{(N:S)}. Thus, the maximality of $K$
implies that $m\in(K:_{M}r)=K$ and we are done.
\end{proof}

In view of Theorem \ref{max}, we have the following.

\begin{corollary}
\label{J}Let $M$ be a finitely generated faithful multiplication $R$-module.
Then the following statements are equivalent:

\begin{enumerate}
\item $J(M)$ is a $J$-submodule of $M$.

\item $J(M)$ is a quasi $J$-submodule of $M$.

\item $J(M)$ is a prime submodule of $M$.
\end{enumerate}
\end{corollary}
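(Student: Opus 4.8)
The plan is to establish the cyclic chain $(1)\Rightarrow(2)$, $(2)\Rightarrow(1)$, $(1)\Rightarrow(3)$ and $(3)\Rightarrow(2)$, keeping everything tethered to the residual ideal by means of the two identities $J(M)=J(R)M$ (valid since $M$ is finitely generated faithful multiplication, \cite{Bast}) and $(J(M):M)=(J(R)M:M)=J(R)$ from Lemma \ref{9}(2); throughout I will also assume $M\neq 0$, so that $J(M)=J(R)M\neq M$ by Nakayama. The first thing I would record is the key containment: by Theorem \ref{1}(2) every quasi $J$-submodule $N$ of $M$ has $(N:M)$ a quasi $J$-ideal, hence $(N:M)\subseteq\sqrt{(N:M)}\subseteq J(R)$, so that $N=(N:M)M\subseteq J(R)M=J(M)$. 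Thus $J(M)$ is an upper bound for every quasi $J$-submodule of $M$.

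The implication $(1)\Rightarrow(2)$ is immediate, since every $J$-submodule is a quasi $J$-submodule. For $(2)\Rightarrow(1)$ I would invoke Theorem \ref{max}: if $J(M)$ is itself a quasi $J$-submodule, then by the containment above no quasi $J$-submodule can contain $J(M)$ properly, so $J(M)$ is a maximal quasi $J$-submodule; the second assertion of Theorem \ref{max} then forces $J(M)$ to be a $J$-submodule. This is the step in which Theorem \ref{max} does the essential work of upgrading ``quasi'' to genuine, and it is the reason the corollary is placed immediately after that theorem.

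For the link with primeness I would pass to the ideal $J(R)$. For $(1)\Rightarrow(3)$: if $J(M)$ is a $J$-submodule then $J(R)=(J(M):M)$ is a $J$-ideal of $R$ by \cite[Corollary 3.4]{Hani}; but the defining condition of a $J$-ideal, applied to the ideal $J(R)$ itself, reads ``$ab\in J(R)$ and $a\notin J(R)$ imply $b\in J(R)$'', which is exactly the statement that $J(R)$ is prime, whence $J(M)=J(R)M$ is a prime submodule by the standard correspondence in multiplication modules. For $(3)\Rightarrow(2)$: a prime submodule $J(M)$ is quasi-primary with $M\text{-}rad(J(M))=J(M)$, and it satisfies $(J(M):M)=J(R)\subseteq J(R)$, so Proposition \ref{5} yields at once that $J(M)$ is a quasi $J$-submodule. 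Chaining $(1)\Rightarrow(2)\Rightarrow(1)$ with $(1)\Rightarrow(3)\Rightarrow(2)$ then closes the loop.

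I expect the only genuinely delicate observation to be that, for the specific ideal $J(R)$, being a $J$-ideal and being prime are literally the same condition, so no separate ``a radical $J$-ideal is prime'' lemma is required; once that is noticed, everything else is a routine assembly of Theorem \ref{max}, Theorem \ref{1}(2), Proposition \ref{5}, Lemma \ref{9}(2), and the multiplication-module prime correspondence.
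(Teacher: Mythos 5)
Your proof is correct, and your $(1)\Leftrightarrow(2)$ is essentially the paper's own argument: every quasi $J$-submodule $N$ satisfies $N=(N:M)M\subseteq J(R)M=J(M)$, so if $J(M)$ is itself a quasi $J$-submodule it is the (unique) maximal one, and the second assertion of Theorem \ref{max} upgrades it to a $J$-submodule (the paper phrases the containment via Theorem \ref{J(I)} rather than Theorem \ref{1}(2), but the content is identical). Where you genuinely diverge is the equivalence with primeness. The paper proves $(2)\Leftrightarrow(3)$ in one line: since $M$-$rad(J(M))=J(M)$ (maximal submodules are prime, so the prime radical of $J(M)$ collapses back to $J(M)$) and $(J(M):M)=(J(R)M:M)=J(R)$, the defining condition of a quasi $J$-submodule, specialized to $N=J(M)$, is \emph{verbatim} the defining condition of a prime submodule; no passage to the ring is needed. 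You instead route through $R$: $(1)\Rightarrow(3)$ via \cite[Corollary 3.4]{Hani} together with your (correct) observation that the $J$-ideal condition, applied to the ideal $J(R)$ itself, is literally primeness of $J(R)$, followed by the prime-ideal/prime-submodule correspondence for finitely generated faithful multiplication modules; and $(3)\Rightarrow(2)$ via quasi-primariness and Proposition \ref{5}. Both routes are sound and your cyclic chain does close all the implications. Yours is longer and leans on two external facts (the multiplication-module prime correspondence and \cite[Corollary 3.4]{Hani}) that the paper's observation makes unnecessary, but it has the merit of isolating the ring-theoretic content --- $J(R)$ is a $J$-ideal if and only if it is prime, the same fact the paper itself invokes (via \cite[Corollary 2]{Haniece}) in Proposition \ref{av} --- and of reusing Proposition \ref{5}. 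A small point in your favor: you note explicitly that $M\neq 0$ forces $J(M)=J(R)M\neq M$ by Nakayama, a properness hypothesis the corollary tacitly assumes.
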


\begin{proof}
(1)$\Rightarrow$ (2) is clear.

(2)$\Rightarrow$ (1) It follows since $J(M)$ is the unique maximal quasi
$J$-submodule of $M$ by Theorem \ref{J(I)}.

(2)$\Leftrightarrow$ (3) Since $M-rad(J(M))=J(M)$, the claim is clear.
\end{proof}

In the following proposition, we prove that $J$-submodule property passes to a
finite intersection and product.

\begin{proposition}
Let $M$ be a multiplication $R$-module and $N_{1},N_{2},...,N_{k}$ be quasi
$J$-submodules of $M$. Then so are $%
%TCIMACRO{\dbigcap \limits_{i=1}^{k}}%
%BeginExpansion
{\displaystyle\bigcap\limits_{i=1}^{k}}
%EndExpansion
N_{i}$ and $\prod\limits_{i=1}^{k}N_{i}$.
\end{proposition}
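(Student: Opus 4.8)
The plan is to reduce both claims to statements about ideals of $R$ via the residual operation $(-:M)$, exploiting that $M$ is multiplication and that finite intersections and products of quasi $J$-ideals are again quasi $J$-ideals (the ideal-theoretic facts from \cite{Haniece} already invoked in the proof of Theorem \ref{1}(4)). First I would observe that for a multiplication module it is not automatic that $M$ is finitely generated faithful, so I must be slightly careful: the cleanest route is to handle the general multiplication case by working with the associated ideals $(N_i:M)$ and using the correspondence $N_i=(N_i:M)M$. For the intersection, I would use the standard identity $\left(\bigcap_{i=1}^{k} N_i : M\right)=\bigcap_{i=1}^{k}(N_i:M)$, which holds for any family of submodules. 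For the product, since $M$ is multiplication I may write $N_i=(N_i:M)M$ and then by the definition of product recalled just before the statement, $\prod_{i=1}^{k}N_i=\left(\prod_{i=1}^{k}(N_i:M)\right)M$, so that $\left(\prod_{i=1}^{k}N_i:M\right)$ relates to $\prod_{i=1}^{k}(N_i:M)$.

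The key steps, in order, are: (i) translate ``$N_i$ is a quasi $J$-submodule'' into ``$(N_i:M)$ is a quasi $J$-ideal of $R$'' — in the finitely generated faithful multiplication case this is exactly Theorem \ref{1}(2), and I expect to reduce to that case or to prove the needed direction directly from the definition; (ii) apply the ideal-level closure of quasi $J$-ideals under finite intersection and finite product (from \cite{Haniece}) to conclude that $\bigcap_{i=1}^{k}(N_i:M)$ and $\prod_{i=1}^{k}(N_i:M)$ are quasi $J$-ideals; (iii) pull this back through the residual identities above to conclude that $\left(\bigcap_{i=1}^{k}N_i:M\right)$ and $\left(\prod_{i=1}^{k}N_i:M\right)$ are quasi $J$-ideals, and finally (iv) invoke Theorem \ref{1}(2) once more, in the reverse direction, to deduce that $\bigcap_{i=1}^{k}N_i$ and $\prod_{i=1}^{k}N_i$ are quasi $J$-submodules. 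I would also verify properness of each resulting submodule (so that the $\sqrt{\cdot}$/radical is a proper ideal), which follows since each $(N_i:M)$ is proper and hence so is any intersection or product of them.

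The main obstacle I anticipate is the mismatch between the hypotheses of the proposition (merely \emph{multiplication}) and those of Theorem \ref{1} (\emph{finitely generated faithful} multiplication), since the equivalence in Theorem \ref{1}(2) genuinely fails without the finiteness and faithfulness assumptions, as the counterexample $M=\mathbb{Z}\times\mathbb{Z}$ after Theorem \ref{1} shows. To circumvent this I would argue directly from the definition of quasi $J$-submodule rather than routing everything through $(-:M)$: given $rm\in\bigcap N_i$ with $r\notin(J(R)M:M)$, each $rm\in N_i$ forces $m\in M\text{-}rad(N_i)$, whence $m\in\bigcap_i M\text{-}rad(N_i)=M\text{-}rad\!\left(\bigcap_i N_i\right)$, using that $M\text{-}rad$ commutes with finite intersection. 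The product case is the genuinely harder one and is where I expect to lean on the multiplication hypothesis: I would reduce $\prod N_i$ to $\bigcap N_i$ (since $\prod_{i} N_i\subseteq\bigcap_i N_i$ and, for submodules of a multiplication module, $M\text{-}rad$ of the product equals $M\text{-}rad$ of the intersection because $\sqrt{\prod(N_i:M)}=\sqrt{\bigcap(N_i:M)}$), thereby deriving the product statement from the already-established intersection statement.
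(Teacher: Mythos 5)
After you correctly discard the route through Theorem \ref{1}(2) (whose finitely generated faithful hypothesis the proposition does not grant), your actual argument coincides with the paper's own proof: the intersection case is verified directly from the definition using $\bigcap_{i=1}^{k}M\text{-}rad(N_i)=M\text{-}rad\bigl(\bigcap_{i=1}^{k}N_i\bigr)$ (which the paper cites as \cite[Theorem 15 (3)]{Ali}), and the product case is reduced to the intersection case via the equality of the radicals of $\prod_{i=1}^{k}N_i$ and $\bigcap_{i=1}^{k}N_i$ (which the paper cites as \cite[Proposition 2.14]{Ece}). The only slip worth noting is your remark that only the product case leans on the multiplication hypothesis: for arbitrary modules only the inclusion $M\text{-}rad\bigl(\bigcap_{i=1}^{k}N_i\bigr)\subseteq\bigcap_{i=1}^{k}M\text{-}rad(N_i)$ holds, and your intersection argument needs the reverse inclusion, which is itself a multiplication-module fact, so both halves of the proof use that hypothesis.
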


\begin{proof}
Suppose that $rm\in%
%TCIMACRO{\dbigcap \limits_{i=1}^{k}}%
%BeginExpansion
{\displaystyle\bigcap\limits_{i=1}^{k}}
%EndExpansion
N_{i}$ and $r\notin(J(R)M:M)$. Then $rm\in N_{i}$ for all $i=1,...,k$ which
gives $m\in M$-$rad(N_{i})$ for all $i=1,...,k.$ Since $%
%TCIMACRO{\dbigcap \limits_{i=1}^{k}}%
%BeginExpansion
{\displaystyle\bigcap\limits_{i=1}^{k}}
%EndExpansion
M$-$rad(Ni)=M$-$rad\left(
%TCIMACRO{\dbigcap \limits_{i=1}^{k}}%
%BeginExpansion
{\displaystyle\bigcap\limits_{i=1}^{k}}
%EndExpansion
N_{i}\right)  $ \cite[Theorem 15 (3)]{Ali}, we conclude that $%
%TCIMACRO{\dbigcap \limits_{i=1}^{k}}%
%BeginExpansion
{\displaystyle\bigcap\limits_{i=1}^{k}}
%EndExpansion
N_{i}$ is a quasi $J$-submodule. By using the similar argument and the
equality $\prod\limits_{i=1}^{k}M$-$rad(Ni)=M$-$rad\left(
%TCIMACRO{\dbigcap \limits_{i=1}^{k}}%
%BeginExpansion
{\displaystyle\bigcap\limits_{i=1}^{k}}
%EndExpansion
N_{i}\right)  $ \cite[Proposition 2.14.]{Ece}, $\prod\limits_{i=1}^{k}N_{i}$
is also a quasi $J$-submodules of $M$.
\end{proof}

The converse of the above proposition can be achieved under certain conditions.

\begin{proposition}
Let $M$ be a finitely generated faithful multiplication $R$-module and
$N_{1},N_{2},...,N_{k}$ be quasi primary submodules of $M$ such that
$\sqrt{N_{i}:M}$ are not comparable for all $i=1,...,k.$ If $%
%TCIMACRO{\dbigcap \limits_{i=1}^{k}}%
%BeginExpansion
{\displaystyle\bigcap\limits_{i=1}^{k}}
%EndExpansion
N_{i}$ or $\prod\limits_{i=1}^{k}N_{i}$ is a quasi $J$-submodule of $M$, then
$N_{i}$ is a quasi $J$-submodule of $M$ for each $i=1,...,k$.
\end{proposition}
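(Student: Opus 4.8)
The plan is to reduce everything to ideals of $R$ via the multiplication structure, then invoke the known characterization of quasi $J$-ideals. By Theorem~\ref{1}(2), a submodule $N$ of a finitely generated faithful multiplication module $M$ is a quasi $J$-submodule if and only if $(N:M)$ is a quasi $J$-ideal of $R$; and by Lemma~\ref{9}(1), $M$-$rad(N_i)=\sqrt{N_i:M}\,M$, so $\sqrt{N_i:M}=(M\text{-}rad(N_i):M)$. Since each $N_i$ is quasi primary, the ideal $P_i:=\sqrt{N_i:M}$ is a prime ideal of $R$, and the hypothesis that the $\sqrt{N_i:M}$ are pairwise incomparable means the $P_i$ are pairwise incomparable primes. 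The crux of the argument is therefore the purely ideal-theoretic statement: if $P_1,\dots,P_k$ are pairwise incomparable primes and either $\bigcap_i P_i$ (the case of the intersection) or a product related to $\prod_i N_i$ is a quasi $J$-ideal, then each $P_i$ is a quasi $J$-ideal, whence each $N_i$ is a quasi $J$-submodule.

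First I would translate the module hypotheses into ideal language. Using Lemma~\ref{9}(3) repeatedly gives $(\prod_i N_i:M)=\prod_i(N_i:M)$ and $(\bigcap_i N_i:M)=\bigcap_i(N_i:M)$. Combined with Theorem~\ref{1}(2), the assumption that $\bigcap_i N_i$ (resp.\ $\prod_i N_i$) is a quasi $J$-submodule becomes the assumption that $\bigcap_i(N_i:M)$ (resp.\ $\prod_i(N_i:M)$) is a quasi $J$-ideal of $R$. Now I would pass to radicals: by Proposition~\ref{7}, $(N_i:M)$ is a quasi $J$-ideal if and only if $\sqrt{(N_i:M)}=P_i$ is a quasi $J$-ideal, so it suffices to show each $P_i$ is a quasi $J$-ideal. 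Taking radicals of the quasi $J$-ideal $\bigcap_i(N_i:M)$ gives $\sqrt{\bigcap_i(N_i:M)}=\bigcap_i\sqrt{(N_i:M)}=\bigcap_i P_i$, and likewise $\sqrt{\prod_i(N_i:M)}=\bigcap_i P_i$, so in either case we know that $\bigcap_i P_i$ is a quasi $J$-ideal, i.e.\ that $\sqrt{\bigcap_i P_i}=\bigcap_i P_i$ is a $J$-ideal of $R$.

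Next I would exploit that a $J$-ideal is contained in $J(R)$ (by \cite[Theorem 2]{Haniece}, quoted above) and, more importantly, that $J(R)$ is prime (\cite[Corollary 2]{Haniece}). The key step is to show each individual prime $P_j$ is a $J$-ideal, knowing only that the incomparable intersection $\bigcap_i P_i$ is. Fix $j$. Since the $P_i$ are pairwise incomparable primes, for each $i\neq j$ I can choose $x_i\in P_i\setminus P_j$; setting $y=\prod_{i\neq j}x_i$, primeness of $P_j$ gives $y\notin P_j$. To test the $J$-ideal property of $P_j$, suppose $ab\in P_j$ with $a\notin J(R)$. Then $aby\in\bigcap_i P_i$: indeed $aby\in P_j$ since $ab\in P_j$, and for $i\neq j$ we have $x_i\in P_i$ so $y\in P_i$ hence $aby\in P_i$. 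Because $\bigcap_i P_i$ is a $J$-ideal and $a\notin J(R)$, we get $by\in\bigcap_i P_i\subseteq P_j$; since $y\notin P_j$ and $P_j$ is prime, $b\in P_j$. Thus $P_j$ is a $J$-ideal, hence a quasi $J$-ideal, hence $(N_j:M)$ is a quasi $J$-ideal by Proposition~\ref{7}, and finally $N_j$ is a quasi $J$-submodule of $M$ by Theorem~\ref{1}(2). As $j$ was arbitrary, the conclusion follows.

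The main obstacle I anticipate is the incomparability argument isolating a single $P_j$: one must verify that $\bigcap_i P_i$ being a $J$-ideal forces each $P_j$ to be one, and this is exactly where the pairwise-incomparability hypothesis is used (to produce $y\notin P_j$ with $y\in\bigcap_{i\neq j}P_i$). Without incomparability the conclusion can fail, so the multiplicative trick with $y$ is the heart of the proof. A secondary technical point is confirming that the product and intersection cases coincide after taking radicals—this relies on $\sqrt{\prod_i A_i}=\sqrt{\bigcap_i A_i}=\bigcap_i\sqrt{A_i}$ for finitely many ideals, together with Lemma~\ref{9}(3) to move between $M$ and $R$; these are routine but should be stated explicitly so that both cases of the proposition are genuinely covered.
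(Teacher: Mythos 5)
Your proof is correct, and its overall skeleton matches the paper's: both arguments reduce the statement to the residual ideals via Theorem \ref{1}, using $\left(\bigcap_{i=1}^{k}N_{i}:M\right)=\bigcap_{i=1}^{k}(N_{i}:M)$ and $\left(\prod_{i=1}^{k}N_{i}:M\right)=\prod_{i=1}^{k}(N_{i}:M)$. The difference is in how the resulting ring-level statement is settled. The paper outsources it entirely: it quotes \cite[Lemma 2.12]{Hosein} to see that each $(N_{i}:M)$ is a quasi primary ideal of $R$, and then invokes \cite[Proposition 5]{Haniece} (intersection case) and \cite[Proposition 6]{Haniece} (product case) as black boxes to conclude that each $(N_{i}:M)$ is a quasi $J$-ideal. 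You instead prove the ring fact from scratch: taking radicals merges the two cases into the single hypothesis that $\bigcap_{i=1}^{k}P_{i}$ is a $J$-ideal, where $P_{i}=\sqrt{N_{i}:M}$, and your avoidance trick (pick $x_{i}\in P_{i}\setminus P_{j}$ for $i\neq j$, set $y=\prod_{i\neq j}x_{i}$, and use primeness of $P_{j}$ twice) extracts that each $P_{j}$ is a $J$-ideal, whence each $N_{j}$ is a quasi $J$-submodule by Proposition \ref{7}. Your route buys self-containedness --- the paper's key citations are to a companion paper listed only as submitted --- and a uniform treatment of intersection and product, at the price of a longer write-up; the paper's route is shorter but leans entirely on external results for the heart of the argument.

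One step you should justify rather than assert: that $P_{i}=\sqrt{N_{i}:M}$ is prime. Primeness is not part of the paper's definition of a quasi primary submodule (that definition only says $rm\in N$ forces $r\in\sqrt{N:M}$ or $m\in M$-$rad(N)$); it is exactly what the citation \cite[Lemma 2.12]{Hosein} supplies, since a quasi primary ideal is by definition one whose radical is prime. Alternatively, it can be checked directly in the present setting using Lemma \ref{9}: if $ab\in\sqrt{N_{i}:M}$ and $a\notin\sqrt{N_{i}:M}$, then from $a^{n}(b^{n}m)\in N_{i}$ for all $m\in M$ and the quasi primary property one gets $b^{n}M\subseteq M$-$rad(N_{i})=\sqrt{N_{i}:M}\,M$, so $b^{n}\in(\sqrt{N_{i}:M}\,M:M)=\sqrt{N_{i}:M}$ by Lemma \ref{9}(2). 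Since your argument uses primeness of $P_{j}$ in two essential places (to get $y\notin P_{j}$ and to cancel $y$ at the end), this line, or the citation, is needed to make the proof complete.
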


\begin{proof}
Suppose that $N_{i}$ $(i=1,...,k)$ is a quasi primary submodule of $M.$ Then
$(N_{i}:M)$ is a quasi primary ideal of $R$ for all $i=1,...,k$, \cite[Lemma
2.12]{Hosein}. If $%
%TCIMACRO{\dbigcap \limits_{i=1}^{k}}%
%BeginExpansion
{\displaystyle\bigcap\limits_{i=1}^{k}}
%EndExpansion
N_{i}$ is a quasi $J$-submodule, we conclude from Theorem \ref{1} that
$\left(
%TCIMACRO{\dbigcap \limits_{i=1}^{k}}%
%BeginExpansion
{\displaystyle\bigcap\limits_{i=1}^{k}}
%EndExpansion
N_{i}:M\right)  =%
%TCIMACRO{\dbigcap \limits_{i=1}^{k}}%
%BeginExpansion
{\displaystyle\bigcap\limits_{i=1}^{k}}
%EndExpansion
\left(  N_{i}:M\right)  $ is a quasi $J$-ideal of $R$. Hence, $\left(
N_{i}:M\right)  $ is a quasi $J$-ideal of $R$ for all $i=1,...,k$ by
\cite[Proposition 5]{Haniece}. Thus, $N_{i}$ is a quasi $J$-submodule of $M$
for all $i=1,...,k$ by Theorem \ref{1}. The proof of the finite product case
is similar since $\left(  \prod\limits_{i=1}^{k}N_{i}:M\right)  =\prod
\limits_{i=1}^{k}\left(  N_{i}:M\right)  $ and by using Theorem \ref{1} and
\cite[Proposition 6]{Haniece}.
\end{proof}

Let $R$ be a ring and $M$ be an $R$-module. The idealization ring of $M$ is
the set $R(+)M=R\oplus M=\left\{  (r,m):r\in R\text{, }m_{2}\in M\right\}  $
with coordinate-wise addition and multiplication defined as $(r_{1}%
,m_{1})(r_{2}m_{2})=(r_{1}r_{2},r_{1}m_{2}+r_{2}m_{1})$. If $I$ is an ideal of
$R$ and $N$ a submodule of $M$, then $I(+)N$ is an ideal of $R(+)M$ if and
only if $IM\subseteq N$. It is well known that if $I(+)N$ is an ideal of
$R(+)M$, then $\sqrt{I(+)N}=\sqrt{I}(+)M$. Moreover, we have
$J(R(+)M)=J(R)(+)M$, \cite{Anderson4}. Next, we characterize quasi $J$-ideals
in any idealization ring $R(+)M$.

\begin{theorem}
\label{6}Let $I$ be an ideal of of a ring $R$ and $N$ be a submodule of an
$R$-module $M$. Then $I(+)N$ is a quasi $J$-ideal of $R(+)M$ if and only if
$I$ is a quasi $J$-ideal of $R$.
\end{theorem}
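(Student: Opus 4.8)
The plan is to reduce the claim to the two structural identities recalled just before the statement, namely $\sqrt{I(+)N}=\sqrt{I}(+)M$ (valid whenever $IM\subseteq N$, so that $I(+)N$ is genuinely an ideal) and $J(R(+)M)=J(R)(+)M$. By the very definition of a quasi $J$-ideal, $I(+)N$ is a quasi $J$-ideal of $R(+)M$ precisely when $\sqrt{I(+)N}=\sqrt{I}(+)M$ is a $J$-ideal of $R(+)M$, while $I$ is a quasi $J$-ideal of $R$ precisely when $\sqrt{I}$ is a $J$-ideal of $R$. Writing $K=\sqrt{I}$, it therefore suffices to prove the purely structural equivalence: $K(+)M$ is a $J$-ideal of $R(+)M$ if and only if $K$ is a $J$-ideal of $R$. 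Note that properness transfers correctly, since $K(+)M=R(+)M$ exactly when $K=R$.

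For the forward direction, I would assume $K(+)M$ is a $J$-ideal and take $a,b\in R$ with $ab\in K$ and $a\notin J(R)$. Testing with the elements $(a,0)$ and $(b,0)$ of $R(+)M$, one has $(a,0)(b,0)=(ab,0)\in K(+)M$, and the identity $J(R(+)M)=J(R)(+)M$ turns $a\notin J(R)$ into $(a,0)\notin J(R(+)M)$. The $J$-ideal hypothesis then forces $(b,0)\in K(+)M$, i.e. $b\in K$, so $K$ is a $J$-ideal of $R$.

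For the converse, assume $K$ is a $J$-ideal of $R$ and take $(a,m_{1}),(b,m_{2})\in R(+)M$ with $(a,m_{1})(b,m_{2})=(ab,am_{2}+bm_{1})\in K(+)M$ and $(a,m_{1})\notin J(R(+)M)=J(R)(+)M$. The crucial observation is that membership in $K(+)M$ constrains only the first coordinate, because the second coordinate automatically lies in $M$; likewise membership in $J(R)(+)M$ is detected solely on the first coordinate. Hence the hypotheses amount to $ab\in K$ and $a\notin J(R)$, and the $J$-ideal property of $K$ yields $b\in K$, so that $(b,m_{2})\in K(+)M$. Therefore $K(+)M$ is a $J$-ideal of $R(+)M$, completing the equivalence.

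I expect no genuine obstacle here: once the radical identity $\sqrt{I(+)N}=\sqrt{I}(+)M$ and the Jacobson radical identity $J(R(+)M)=J(R)(+)M$ are invoked, the argument is a direct unravelling of the idealization multiplication $(r_{1},m_{1})(r_{2},m_{2})=(r_{1}r_{2},r_{1}m_{2}+r_{2}m_{1})$. The only point demanding a little care is the remark that both $K(+)M$ and $J(R)(+)M$ impose conditions on the first coordinate alone, which is exactly what makes the $J$-ideal condition on $R(+)M$ collapse to the $J$-ideal condition on $R$.
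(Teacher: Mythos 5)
Your proof is correct, and its computational core coincides with the paper's: both directions are settled by testing elements of the form $(a,0),(b,0)$ (forward) and arbitrary pairs $(r_{1},m_{1}),(r_{2},m_{2})$ (converse), together with the two identities $\sqrt{I(+)N}=\sqrt{I}(+)M$ and $J(R(+)M)=J(R)(+)M$. The difference is in how the quasi $J$-property is invoked, and it is worth recording. The paper never passes to $J$-ideals: in each direction it verifies the elementwise statement that $ab\in I$ and $a\notin J(R)$ force $b\in\sqrt{I}$ (resp.\ the analogue for $I(+)N$), and this yields the quasi $J$-property only through the elementwise characterization of quasi $J$-ideals --- an ideal $A$ is a quasi $J$-ideal iff $xy\in A$ and $x\notin J(R)$ imply $y\in\sqrt{A}$ --- a fact from the cited paper \cite{Haniece} whose nontrivial half uses that $J(R)$ is semiprime, so that $x\notin J(R)$ gives $x^{n}\notin J(R)$. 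You instead take the definition literally (quasi $J$-ideal means the radical is a $J$-ideal) and reduce the theorem to the standalone lemma that $K(+)M$ is a $J$-ideal of $R(+)M$ if and only if $K$ is a $J$-ideal of $R$, applied to $K=\sqrt{I}$; the key observation that membership in both $K(+)M$ and $J(R)(+)M$ is detected on the first coordinate alone is exactly right. What your route buys is self-containedness (nothing beyond the definition and the two displayed identities is needed) and a lemma of independent use; what the paper's route buys is brevity and uniformity with its elementwise treatment of quasi $J$-submodules, though as written its ``as needed'' conclusions tacitly lean on the unstated characterization. Your properness remark ($K(+)M=R(+)M$ iff $K=R$) is also fine, and in fact automatic, since a $J$-ideal is proper by definition and properness of $\sqrt{A}$ forces properness of $A$.
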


\begin{proof}
Suppose $I(+)N$ is a quasi $J$-ideal of $R(+)M$ and let $a,b\in R$ such that
$ab\in I$ and $a\notin J(R)$. Then $(a,0)(b,0)\in I(+)N$ and $(a,0)\notin
J(R(+)M)$. Therefore, $(b,0)\in\sqrt{I(+)N}=\sqrt{I}(+)M$ and so $b\in\sqrt
{I}$ as needed. Conversely, suppose $I$ is a quasi $J$-ideal of $R$. Let
$\left(  r_{1},m_{1}\right)  ,\left(  r_{2},m_{2}\right)  \in$ $R(+)M$ such
that $\left(  r_{1},m_{1}\right)  \left(  r_{2},m_{2}\right)  =(r_{1}%
r_{2},r_{1}m_{2}+r_{2}m_{1})\in I(+)N$ and $\left(  r_{1},m_{1}\right)  \notin
J(R(+)M)=J(R)(+)M$. Then $r_{1}r_{2}\in I$ and $r_{1}\notin J(R)$ which imply
that $r_{2}\in\sqrt{I}$. Thus, $\left(  r_{2},m_{2}\right)  \in\sqrt
{I}(+)M=\sqrt{I(+)N}$ and $I(+)N$ is a quasi $J$-ideal of $R(+)M$.
\end{proof}

In view of Theorem \ref{6}, we have

\begin{corollary}
Let $I$ be an ideal of of a ring $R$ and $M$ be a finitely generated faithful
multiplication $R$-module. If $IM$ is a quasi $J$-submodule of $M$, then
$I(+)N$ is a quasi $J$-ideal of $R(+)M$ for any submodule $N$ of $M$.
\end{corollary}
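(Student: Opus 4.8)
The plan is to chain together the two results just established, namely Theorem~\ref{1}~(1) and Theorem~\ref{6}. The corollary asserts nothing about the internal structure of $I(+)N$ directly; instead it bridges the ``submodule'' side and the ``idealization'' side through the common intermediary of $I$ being a quasi $J$-ideal of $R$. So the whole argument is a two-step implication with no real computation.

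First I would record that, since $M$ is a finitely generated faithful multiplication $R$-module, Theorem~\ref{1}~(1) supplies the equivalence ``$I$ is a quasi $J$-ideal of $R$'' $\Longleftrightarrow$ ``$IM$ is a quasi $J$-submodule of $M$''. Applying the backward direction of this equivalence to the hypothesis that $IM$ is a quasi $J$-submodule of $M$ yields at once that $I$ is a quasi $J$-ideal of $R$.

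Next I would invoke Theorem~\ref{6}, whose ``if'' direction states that whenever $I$ is a quasi $J$-ideal of $R$, the set $I(+)N$ is a quasi $J$-ideal of $R(+)M$. Since the proof of Theorem~\ref{6} places no constraint on $N$ beyond the standing requirement (recorded just before that theorem) that $I(+)N$ actually be an ideal of $R(+)M$, i.e.\ $IM\subseteq N$, the conclusion holds for every admissible submodule $N$. Combining the two steps completes the argument.

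I do not expect any genuine obstacle here: the corollary is a pure bookkeeping consequence of the preceding two theorems. The only point deserving a moment of care is the implicit constraint $IM\subseteq N$ that makes $I(+)N$ an ideal in the first place, which is built into the meaning of the statement and does not affect the validity of the chain of implications.
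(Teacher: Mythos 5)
Your proposal is correct and matches the paper's proof exactly: the paper likewise derives that $I$ is a quasi $J$-ideal of $R$ from Theorem~\ref{1}~(1) and then concludes via Theorem~\ref{6}. Your added remark about the implicit requirement $IM\subseteq N$ (needed for $I(+)N$ to be an ideal of $R(+)M$) is a reasonable point of care that the paper leaves unstated, but it does not change the argument.
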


\begin{proof}
The result follows by Theorem \ref{6} and (1) of Theorem \ref{1}.
\end{proof}

We note that if $I(+)N$ is a quasi $J$-ideal of $R(+)M$, then $N$ need not be
a quasi $J$-submodule of $M$. For example, while $0(+)\overline{0}$ is a quasi
$J$-ideal of $%
%TCIMACRO{\U{2124} }%
%BeginExpansion
\mathbb{Z}
%EndExpansion
(+)%
%TCIMACRO{\U{2124} }%
%BeginExpansion
\mathbb{Z}
%EndExpansion
_{6}$ by Theorem \ref{6}, but $\overline{0}$ is not quasi $J$-submodule of $%
%TCIMACRO{\U{2124} }%
%BeginExpansion
\mathbb{Z}
%EndExpansion
_{6}$. For example, $2.\bar{3}=\bar{0}$ but $2\notin(J(%
%TCIMACRO{\U{2124} }%
%BeginExpansion
\mathbb{Z}
%EndExpansion
)%
%TCIMACRO{\U{2124} }%
%BeginExpansion
\mathbb{Z}
%EndExpansion
_{6}:%
%TCIMACRO{\U{2124} }%
%BeginExpansion
\mathbb{Z}
%EndExpansion
_{6})=\left\langle 6\right\rangle $ and $\bar{3}\notin M$-$rad(\bar{0}%
)=\bar{0}$.

\end{document}